\font\dixmath=cmsy10
\theoremstyle{plain}
\newtheorem{prop}{Proposition}[section]
\newtheorem{coro}[prop]{Corollary}
\newtheorem{lemm}[prop]{Lemma}
\newtheorem{thrm}[prop]{Theorem}
\newtheorem*{thrm*}{Theorem}
\newtheorem*{thrm1}{Theorem 1}
\newtheorem*{thrm2}{Theorem 2}
\theoremstyle{definition}
\newtheorem{defi}[prop]{Definition}
\newtheorem{exam}[prop]{Example}
\newtheorem{rema}[prop]{Remark}
\newenvironment{conditions}{\setlength{\jot}{-1pt}\vspace{-5pt}\subequations\renewcommand{\theequation}{\theprop.\roman{equation}}\flalign}{\endflalign\endsubequations}
\numberwithin{equation}{section}
\renewcommand{\aa}[2]{a_{#1, #2}}
\newcommand{\aab}[2]{\underline{a}_{#1, #2}}
\newcommand{\BB}[1]{B_{#1}}
\newcommand{\bidual}{^{\hspace{-0.05em}\raise0.6pt\hbox{$\scriptscriptstyle+$}\hspace{-0.05em}*}}
\newcommand{\BKL}[1]{B_{#1}\bidual}
\newcommand{\BP}[1]{B_{#1}\plusexp}
\newcommand{\br}{\beta}
\newcommand{\brbr}{\gamma}
\newcommand{\brd}[1]{\mathrm{br}_{#1}}
\newcommand{\brr}{\beta'}
\newcommand{\cond}[1]{&\quad\text{-- #1}&}
\newcommand{\dang}[1]{D(#1)}
\newcommand{\ddd}[1]{\delta_{#1}}
\newcommand{\dddd}[2]{d_{#1,#2}}
\newcommand{\ddddb}[2]{\underline{d}_{#1,#2}}
\newcommand{\dpt}[1]{\mathrm{dp}_{#1}}
\newcommand{\equal}\equiv
\newcommand{\ff}[1]{\phi_{\hspace{-0.5pt}\raise-1pt\hbox{$\scriptstyle #1$}}}
\newcommand{\find}[1]{e(#1)}
\newcommand{\findi}[1]{f(#1)}
\newcommand{\flip}[1]{\Phi_{#1}}
\newcommand{\floor}[1]{F(#1)}
\renewcommand{\ge}{\geqslant}
\newcommand{\ie}{\emph{i.e.}}
\newcommand{\inv}{^{\minus\hspace{-0.1em}1}}
\newcommand{\last}[1]{{#1}^{\scriptscriptstyle\mathtt{\#}}}
\newcommand{\Ldots}{...\,} 
\renewcommand{\le}{\leqslant}
\newcommand{\len}[1]{\vert#1\vert}
\newcommand{\lenr}[2]{\vert#2\vert_{#1}}
\newcommand{\lens}[1]{{\parallel}#1{\parallel}_{\sigma}}
\newcommand{\minus}{\mathchoice{-}{-}{\raise0.7pt\hbox{$\scriptscriptstyle-$}\scriptstyle}{-}}
\newcommand{\newintegeri}[2]{
	\expandafter\def\csname #1\endcsname{#2}
	\expandafter\def\csname #1o\endcsname{{#2\minus1}}
	\expandafter\def\csname #1t\endcsname{{#2\minus2}}
	\expandafter\def\csname #1p\endcsname{{#2\plus1}}
	\expandafter\def\csname #1pp\endcsname{{#2\plus2}}}
\newcommand{\newintegerii}[1]{\newintegeri{#1#1}{#1}}
\newcommand{\nf}[1]{\mathrm{NF}_{#1}}
\newcommand{\NF}[1]{\underline{\mathrm{NF}}_{#1}}
\newcommand{\plus}{\mathchoice{+}{+}{\raise0.7pt\hbox{$\scriptscriptstyle+$}\scriptstyle}{+}}
\newcommand{\plusminus}{\mathchoice{\pm}{\pm}{\raise0.7pt\hbox{$\scriptscriptstyle\pm$}\scriptstyle}{\pm}}
\newcommand{\plusexp}{^{\raise0.8pt\hbox{$\hspace{-0.05em}\scriptscriptstyle+$}}}
\newcommand{\rev}[1]{{\curvearrowright}^{\scriptscriptstyle(#1)}}
\newcommand{\revv}{\curvearrowright}
\newcommand{\revi}[1]{R_{#1}}
\newcommand{\revii}[1]{R'_{#1}}
\newcommand{\reviii}[1]{R''_{#1}}
\newcommand{\sig}[1]{\sigma_{\!#1}} 
\newcommand{\siginv}[1]{\sigma_{\!#1}^{\hspace{-0.05em}\raise0.8pt\hbox{$\scriptscriptstyle-$}\hspace{-0.1em}1}}
\newcommand{\sigpm}[1]{\sigma_{\!#1}^{\hspace{-0.05em}\raise0.8pt\hbox{$\scriptscriptstyle\pm$}\hspace{-0.1em}1}}
\newcommand{\sigg}{\sigma}
\newcommand{\tail}{\mathrm{tail}}
\newcommand{\uu}{u}
\newcommand{\uuu}{\uu'}
\newcommand{\vv}{v}
\newcommand{\vvv}{v'}
\newcommand{\WBKL}[1]{\underline{B}_{#1}\bidual}
\newcommand{\ww}{w}
\newcommand{\wwb}{\underline{w}}
\newcommand{\wwl}[1]{\ww_{(#1)}}
\newcommand{\wwt}{\overline{w}}
\newcommand{\www}{\ww'}
\newcommand{\wwwb}{\underline{\ww}'}
\newcommand{\wwwt}{\overline{w'}}
\newcommand{\wwww}{\ww''}
\newcommand{\wwwwt}{\overline{w''}}
\newcommand{\wwh}{{\widehat\ww}}
\newcommand{\xx}{x}
\newcommand{\XX}{X}
\newcommand{\yy}{y}
\begin{document} 

\title{Every braid admits a short sigma-definite representative}
\author{Jean Fromentin} 
\address{Laboratoire de Math\'ematiques Nicolas Oresme,
  UMR 6139 CNRS, Universit\'e de Caen BP 5186, 14032 Caen, France}
\email{jean.fromentin@math.unicaen.fr}
\maketitle

\begin{abstract}
A result by Dehornoy (1992) says that every nontrivial braid 
admits a $\sigg$-definite word representative, defined as a braid word in which the generator
$\sig\ii$ with maximal index $\ii$ appears with
exponents that are all positive, or all negative. This is the
ground result for ordering braids. In this paper, we enhance
this result and prove that every braid admits  a
$\sigg$-definite word  representative that, in addition, is
quasi-geodesic. This establishes a longstanding
conjecture.  Our proof uses the dual braid monoid and a new
normal form called the rotating normal form.
\end{abstract}

%
%

It is known since~\cite{Dehornoy:LD} that Artin's braid
groups are orderable, by an ordering  that enjoys many remarkable
properties~\cite{DehornoyDynnikovRolfsenWiest}.  The key point in the existence of this ordering is
the property that every nontrivial braid admits a
$\sigg$-definite representative, defined to be a braid
word~$\ww$ in the standard Artin generators~$\sig\ii$ in
which the generator~$\sig\ii$ with highest index~$\ii$
occurs only positively (no $\siginv\ii$), in which case
$\ww$ is called \emph{$\sigg$-positive}, or only negatively
(no $\sig\ii$), in which case $\ww$ is called
\emph{$\sigg$-negative}. 
For $\br$ a braid, let $\lens\br$ denote the length of the
shortest expression of~$\br$ in terms of the Artin generators
$\sigpm1$. Our main goal in this paper is to prove the
following result.

\begin{thrm1} 
Each $\nn$-strand braid~$\br$ admits a $\sigg$-definite expression of length at most $6\,(\nno)^2\,\lens\br$.
\end{thrm1}

Theorem 1 answers a puzzling open question in the theory of
braids. Indeed,
the problem of finding a short $\sigma$-definite
representative word for every braid has an already long
history. In the  past two decades, at least five or six different
proofs of the existence of such $\sigg$-definite
representatives have been given. The first one by Dehornoy
in 1992 relies on self-distributive
algebra~\cite{Dehornoy:LD}. The next one, by
Larue~\cite{Larue}, uses the Artin representation of braids
as automorphisms of a free groups, an argument that was
independently rediscovered by
Fenn--Greene--Rolfsen--Rourke--Wiest \cite{FennGreenRolfsenRourkeWiest} in a
topological language of so-called curve diagrams. A
completely different proof based on the geometry of the
Cayley graph of~$B_\nn$ and on Garside's theory appears
in \cite{Dehornoy:FA}. Further methods have been proposed
in connection with relaxation algorithms, which are
strategies for inductively simplifying some geometric object
associated with the considered braid, typically a family of
closed curves drawn in a punctured disk. Both the methods
of Dynnikov--Wiest in
\cite{DynnikovWiest} and of Bressaud in~\cite{Bressaud} lead to $\sigg$-definite representatives.
However, a frustrating feature of all the above methods is that, when one starts with a braid
word~$\ww$ of length~$\ll$, one obtains in the best case the existence of a
$\sigg$-definite word~$\ww'$ equivalent to~$\ww$ whose length is bounded above by an
exponential in~$\ll$---in the cases of \cite{Larue, FennGreenRolfsenRourkeWiest,
Dehornoy:FA, DynnikovWiest, Bressaud}, the original method of~\cite{Dehornoy:LD} is much
worse. By contrast, experiments, specially those based on the algorithms derived
from~\cite{Dehornoy:FA} and~\cite{DynnikovWiest}, strongly suggested the existence of short
$\sigg$-definite representatives, making it natural to conjecture that every braid word of
length~$\ll$ is equivalent to a $\sigg$-definite word of length~$O(\ll)$. This is what Theorem~1
establishes. It is fair to mention that the method of~\cite{DynnikovWiest} proves the existence of
``relatively short $\sigg$-definite representatives''. Indeed, it provides for every length~$\ell$
braid word a $\sigg$-definite equivalent word whose length with respect to some
conveniently extended alphabet lies in~$O(\ll)$. However, when the output word is translated
back to the alphabet of Artin's generators~$\sig\ii$, the only upper bound Dynnikov and Wiest
could deduce so far is exponential in~$\ll$.

The statement of Theorem~1 is essentially optimal. 
Indeed, it is observed in~\cite[Chapter XVI]{DehornoyDynnikovRolfsenWiest} that the
length~$4(\nn-2)$ braid word
\[
\sig\nno \sig\nnt^{-2} \, ... \, \sig2^{-2e} \sig1^{2e}
\sig2^{2e}  \,...\,  \sig\nnt^2\siginv\nno,
\]
with $e = \pm1$ according to the parity of~$\nn$, is
equivalent to no $\sigg$-definite word of length smaller than
$\nn^2 - \nn -2$. Thus, in any case, the factor~$(\nn-1)^2$ of Theorem~1 could not be
possibly replaced with a factor less than~$O(\nn)$.

Our proof of Theorem~1 is effective, and it directly leads to an algorithm that returns, for every $\nn$-strand
braid~$\br$, a distinguished $\sigg$-definite
word~$\NF\nn(\br)$  that represents~$\br$. Analyzing the
complexity of this algorithm leads to

\begin{thrm2}
There exists an effective algorithm which, for each $\nn$-strand braid 
specified by a word of length $\ll$, computes the
$\sigg$-definite word~$\NF\nn(\br)$ in $O(\ll^2)$ steps.
\end{thrm2}

We prove Theorems~1 and~2 using the dual braid monoid~$\BKL\nn$ associated with the Birman--Ko--Lee generators 
and introducing a new normal form on~$\BKL\nn$,
called the rotating normal form, which is analogous to the
alternating normal form of~\cite{Burckel:WO}
and~\cite{Dehornoy:AF}. The rotating normal form is based
on the $\ff\nn$-splitting operation, a natural way of
expressing every $\nn$-strand dual braid in terms of a finite
sequence of ($\nno$)-strand dual braids. 

The principle of the argument is as follows.
Given a $\nn$-strand braid~$\br$, we first express it as a
fraction $\ddd\nn^{-\tt}\,\brr$, where $\ddd\nn$ is the
Garside element of the monoid~$\BKL\nn$ and
$\brr$ belongs to~$\BKL\nn$. If the exponent~$\tt$ happens to
be greater than the length of the
above-mentioned
$\ff\nn$-splitting of~$\brr$, then the $\sigg$-negative factor
$\ddd\nn^{-\tt}$ wins over the $\sigg$-positive factor~$\brr$,
and a
$\sigg$-negative word representing~$\br$ can be obtained
by an easy direct computation. Otherwise, we determine the
rotating normal form~$\ww$ of~$\brr$ and try to find
a
$\sigg$-positive representative of~$\br$ by pushing the negative
factor~$\ddd\nn^{-\tt}$ to the right through the positive
part~$\ww$. The process is incremental. The problem is that
certain special $\sigg$-negative words, called dangerous,
appear in the process. The key point is that rotating normal
words satisfy some syntactic conditions that enable them to
neutralize dangerous words. In this way, one finally obtains a word
representative of~$\br$ that contains no~$\sig\nno\inv$, hence
is either $\sigg$-positive, or involves no~$\sig\nno$ at all. An
induction on the braid index~$\nn$ then allows one to conclude.

The basic step of the above process consists in switching one
dangerous factor  and one rotating normal word. This step
increases the length by a multiplicative factor~$3$ at most,
and this is the way the length and time upper bounds of
Theorems~1 and~2 arise.

In this paper, the braid ordering is not used---in contrary, the
existence of the latter can be (re)-deduced from our current
results. However, the braid ordering is present behind our
approach. What actually explains the existence of our normal
form is the connection between the rotating normal form of
Section~\ref{S:NormalForm} and the restriction of the braid ordering to the
dual braid monoid, which is sketched in~\cite{F}.

The paper is organized as follows. In
Section~\ref{S:DualBraidMonoids}, we briefly recall the definition
of the dual braid monoids and the properties of these monoids
that are needed in the sequel, in particular those connected with
the Garside structure. In Section~\ref{S:NormalForm}, we
introduce the rotating normal form, which is our new normal form
on~$\BKL\nn$. In Section~\ref{S:Ladders}, we establish syntactic
constraints about rotating normal words, namely that every normal word is
what we call a ladder. In Section~\ref{S:Reversing}, we introduce the
notion of a dangerous braid word and define the so-called
reversing algorithm, which transforms each word consisting of a
dangerous word followed by a ladder into a particular type of
$\sigg$-definite word called a wall. In Section~\ref{S:Walls} we
compute the complexity of the above reversing algorithm.
Finally, we put all pieces together and establish Theorems~1 and~2
in Section~\ref{S:TheMainResult}.

%
%

\section{Dual braid monoids}
\label{S:DualBraidMonoids} 

Our first ingredient for investigating braids will be the Garside
structure of the so-called dual braid monoid~$\BKL\nn$. Here we
recall the needed definitions and results.

%
%

\subsection{Birman--Ko--Lee generators}

We recall that Artin's braid group~$\BB\nn$ is
defined for
$\nn\ge2$ by the presentation
\begin{equation}
\label{E:BnPresentation}
\left<\sig1,\Ldots,\sig\nno;\begin{array}{cl} \sig\ii\sig\jj\,=\,\sig\jj\sig\ii & \text{for $|\ii\minus\jj|\ge2$}\\ \sig\ii\sig\jj\sig\ii\,=\,\sig\jj\sig\ii\sig\jj & \text{for $|\ii\minus\jj|=1$}  \end{array}\right>.
\end{equation}

The submonoid of~$\BB\nn$ generated by $\{\sig1, \Ldots,
\sig\nno\}$ is denoted by~$\BP\nn$, and its elements are called
\emph{positive braids}. As is well known, the monoid~$\BP\nn$ equipped with
Garside's fundamental braid~$\Delta_\nn$ has the structure of
what is now usually called a Garside monoid~\cite{Garside,
Dehornoy:GG}.

The \emph{dual braid monoid} is another submonoid
of~$\BB\nn$. It is generated by a subset of~$\BB\nn$
that properly includes $\{\sig1, \Ldots,
\sig\nno\}$, and consists of the so-called \emph{Birman--Ko--Lee
generators} introduced in~\cite{BirmanKoLee}.

\begin{defi}\label{D:DualGenerator}
(See Figure~\ref{F:DualGenerator}.)
For $1 \le \indi < \indii$, we put
\begin{equation}
\label{E:DualGenerator}
\aa\indi\indii = \sig\indi ... \sig\indiit \, \sig\indiio \, \siginv\indiit ... \siginv\indi.
\end{equation}
\end{defi}

\begin{figure}[htb]
\begin{picture}(80,19) 
\put(5,0){\includegraphics{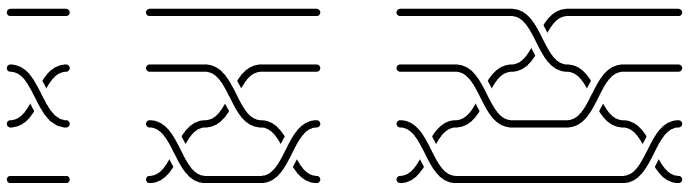}}
{\small
\put(3,0){$1$}
\put(3,6){$2$}
\put(3,12){$3$}
\put(3,17.5){$4$}}
\end{picture}
\caption{\sf \smaller From the left to the right : diagram of the braids   $\aa23
(=\nobreak\sig2)$, $\aa13 (=\sig1\sig2\siginv1)$ and $\aa14
(=\sig1\sig2\sig3\siginv2\siginv1)$. The generator
$\aa\indi\indii$ corresponds to the half-twist where the~$\indii$th strand
crosses over the $\indi$th strand, both remaining under all
intermediate strands.}
\label{F:DualGenerator}
\end{figure}

\begin{rema}
In~\cite{BirmanKoLee}, $\aa\indi\indii$ is defined to be  $\sig\indiio... \sig\indip \, \sig\indi \, \siginv\indip ... \siginv\indiio$, \ie, it
corresponds to the strands at positions~$\indi$ and~$\indii$
passing in front of all intermediate strands, not behind.  Both
options lead to isomorphic monoids, but our choice is the only
one that naturally leads to the suitable embedding
of~$\BKL\nno$ into~$\BKL\nn$.
\end{rema}

The family of all braids~$\aa\indi\indii$ enjoys nice invariance
properties with respect to cyclic permutations of the indices,
which are better visualized when $\aa\indi\indii$ is represented
on a cylinder---see Figure~\ref{F:Cylinder}. Then, it is natural to
associate with~$\aa\indi\indii$ the chord connecting the
vertices~$\indi$ and~$\indii$ in a circle with $\nn$~marked
vertices \cite{BessisDigneMichel}.

\begin{figure}[htb]
\begin{picture}(89,23) 
\put(2,1){\includegraphics{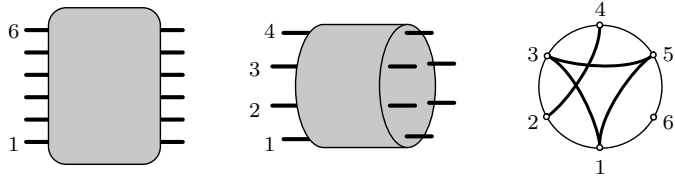}}
{\footnotesize
\put(0,3){$1$}
\put(0,18){$6$}
\put(34,3){$1$}
\put(32,7.5){$2$}
\put(32,13){$3$}
\put(34,18){$4$}
\put(78,0){$1$}
\put(69,6){$2$}
\put(69,15){$3$}
\put(78,21){$4$}
\put(87,15){$5$}
\put(87,6){$6$}}
\end{picture}
\caption{{\sf \smaller Rolling up the usual diagram helps up to visualize the
symmetries of the braids $\aa\indi\indii$. On the resulting cylinder,
$\aa\indi\indii$ naturally corresponds to the chord connecting the vertices
$\indi$ and
$\indii$.}}
\label{F:Cylinder}
\end{figure}

Hereafter, we write $[\indi, \indii]$ for the interval~$\{\indi, \Ldots,
\indii\}$ of~$\mathbb{N}$, and we say that $[\indi, \indii]$ is
\emph{nested} in~$[\indiii,\indiv]$ if we have $\indiii < \indi
<\indii < \indiv$. A nicely symmetric presentation of~$\BB\nn$ in terms of the generators~$\aa\indi\indii$ is as follows.

\begin{lemm}\cite{BirmanKoLee}
\label{L:DualRelations}
In terms of the~$\aa\indi\indii$, the group~$\BB\nn$ is presented by the relations
\begin{gather}
\label{E:DualCommutativeRelation}
\aa\indi\indii\aa\indiii\indiv= \aa\indiii\indiv\aa\indi\indii \text{\quad for $[\indi, \indii]$ and $[\indiii, \indiv]$ disjoint or nested},\\
\label{E:DualNonCommutativeRelation}
\aa\indi\indii\aa\indii\indiii= \aa\indii\indiii\aa\indi\indiii =\aa\indi\indiii\aa\indi\indii \text{\quad for $1 \le \indi<\indii<\indiii\le\nn$}.
\end{gather}
\end{lemm}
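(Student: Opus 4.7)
The plan is to prove Lemma~\ref{L:DualRelations} in two stages: first, verify that the listed relations hold among the braids $\aa\indi\indii\in\BB\nn$ defined by~\eqref{E:DualGenerator}; second, show that these relations suffice to present~$\BB\nn$, in the sense that the group abstractly defined by them is isomorphic to~$\BB\nn$.

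For the first stage, I would compute directly from Artin's presentation~\eqref{E:BnPresentation}. When $[\indi,\indii]$ and $[\indiii,\indiv]$ are disjoint with $\indii<\indiii$, every Artin generator appearing in $\aa\indi\indii$ has index differing by at least~$2$ from every Artin generator appearing in $\aa\indiii\indiv$, so commutation is immediate from the first family of Artin relations. When the intervals are nested, I would proceed by induction on $(\indiii\minus\indi)\plus(\indii\minus\indiv)$ using two peel-off identities: $\aa\indi\indii=\sig\indi\,\aa\indip\indii\,\siginv\indi$ (immediate from~\eqref{E:DualGenerator} when $\indip<\indii$) and the symmetric $\aa\indi\indii=\siginv\indiio\,\aa\indi\indiio\,\sig\indiio$ (which follows from~\eqref{E:DualGenerator} after one application of the braid relation). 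Whenever $\indip<\indiii$, the factor $\sig\indi$ commutes with $\aa\indiii\indiv$ and one reduces to the smaller nested instance $[\indip,\indii]\supset[\indiii,\indiv]$; symmetrically when $\indiv<\indiio$. The remaining base case is the tight one $\indip=\indiii$ and $\indiv=\indiio$, handled by a direct Artin-word computation. The triple relations~\eqref{E:DualNonCommutativeRelation} are verified in the same spirit.

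For the second stage, let $G$ be the group presented abstractly by generators $\{\aab\indi\indii:1\le\indi<\indii\le\nn\}$ and the relations obtained from~\eqref{E:DualCommutativeRelation}--\eqref{E:DualNonCommutativeRelation} by replacing each $\aa{\indi}{\indii}$ with $\aab{\indi}{\indii}$, and let $\fff\colon G\to\BB\nn$ be the homomorphism $\aab\indi\indii\mapsto\aa\indi\indii$, well defined by the first stage. Since $\aa\indi\indip=\sig\indi$, the image of $\fff$ contains every Artin generator, so $\fff$ is surjective. To build an inverse, define $\psi\colon\BB\nn\to G$ by $\sig\ii\mapsto\aab\ii\iip$: the Artin relations~\eqref{E:BnPresentation} must be checked in~$G$ for these images. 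The commutation for $\abs{\ii\minus\jj}\ge2$ is an instance of~\eqref{E:DualCommutativeRelation} with disjoint unit intervals. For the braid relation, applying~\eqref{E:DualNonCommutativeRelation} to $\ii<\iip<\iipp$ yields two expressions $\aab\ii\iipp=\aab\iip\iipp\inv\aab\ii\iip\aab\iip\iipp$ and $\aab\ii\iipp=\aab\ii\iip\aab\iip\iipp\aab\ii\iip\inv$ for the auxiliary generator $\aab\ii\iipp$; equating them and rearranging gives $\sig\ii\sig\iip\sig\ii=\sig\iip\sig\ii\sig\iip$ in~$G$.

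Finally, $\fff\circ\psi$ fixes every Artin generator and so equals $\mathrm{id}_{\BB\nn}$, while for $\psi\circ\fff=\mathrm{id}_G$ I must establish the identity $\aab\indi\indii=\aab\indi\indip\cdots\aab\indiit\indiio\,\aab\indiio\indii\,\aab\indiit\indiio\inv\cdots\aab\indi\indip\inv$ in~$G$ for every $\indi<\indii$. I would prove this by induction on $\indii\minus\indi$: the outer equality of~\eqref{E:DualNonCommutativeRelation} applied to $\indi<\indip<\indii$ (valid when $\indip<\indii$) reads $\aab\indi\indip\aab\indip\indii=\aab\indi\indii\aab\indi\indip$, whence $\aab\indi\indii=\aab\indi\indip\aab\indip\indii\aab\indi\indip\inv$, and the induction hypothesis expands $\aab\indip\indii$ as required. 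The main obstacle will be the tight nested base case in the first stage, which is the most computationally delicate part of the verification; once that is settled, the rest of the argument is routine group theory.
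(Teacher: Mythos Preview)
The paper does not prove this lemma: it is quoted from \cite{BirmanKoLee} and stated without argument, so there is no ``paper's own proof'' to compare against. Your proposal is a correct and standard self-contained derivation. The two-homomorphism scheme (define $G$ abstractly, build $\fff\colon G\to\BB\nn$ and $\psi\colon\BB\nn\to G$, check both composites are identities) is exactly the right framework, and your key steps are sound: the Artin braid relation in~$G$ does follow from equating the two expressions for $\aab\ii\iipp$ furnished by~\eqref{E:DualNonCommutativeRelation}, and the identity $\psi\circ\fff=\mathrm{id}_G$ does reduce to the recursion $\aab\indi\indii=\aab\indi\indip\,\aab\indip\indii\,\aab\indi\indip\inv$, which is the outer equality of~\eqref{E:DualNonCommutativeRelation}. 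The peel-off identities you state for the nested commutation are correct (the second one, $\aa\indi\indii=\siginv\indiio\,\aa\indi\indiio\,\sig\indiio$, indeed follows from one braid relation plus commutations), and the ``tight'' base case $[\indip,\indiio]$ inside $[\indi,\indii]$ is a genuine but short Artin-word computation. Nothing is missing.
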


In the representation of
Figure~\ref{F:Cylinder}, the relations of
type~\eqref{E:DualCommutativeRelation} mean
that, in each chord triangle, the product of two adjacent edges
taken clockwise does not depend on the edges: for instance, the triangle~$(1, 3, 5)$ gives
$\aa13\aa35 = \aa35\aa15 = \aa15\aa13$.  Relations of
type~\eqref{E:DualNonCommutativeRelation} say that the
generators associated with non-intersecting chords commute: for
instance, on Figure~\ref{F:Cylinder}, we read that $\aa24$
and~$\aa15$ commute---but, for instance, nothing is
claimed about~$\aa24$ and~$\aa13$.

%
%

\subsection{The dual braid monoid~$\BKL\nn$ and 
its Garside structure}
\label{SS:DualBraidMonoid}

By definition, we have $\sig\indi  = \aa\indi{\indip}$  for each~$\indi$: every Artin generator is a Birman--Ko--Lee
generator. On the other hand, the braid~$\aa13$ belongs to no
monoid~$\BP\nn$. Hence, for $\nn \ge 3$, the submonoid
of~$\BB\nn$ generated by the Birman--Ko--Lee
braids~$\aa\indi\indii$ is a proper extension of~$\BP\nn$: this
submonoid is what is called the dual braid monoid.

 \begin{defi}
\label{D:DualBraidMonoid}
For $\nn \ge 2$, the \emph{dual braid monoid}~$\BKL\nn$ is defined to be
the submonoid of~$\BB\nn$ generated by the braids~$\aa\indi\indii$ with~$1
\le \indi < \indii \le\nobreak \nn$.  
\end{defi}

So, every positive $\nn$-strand braid belongs to~$\BKL\nn$, but the converse
is not true for $\nn \ge 3$: the braid~$\aa13$, \ie,
$\sig1\sig2\siginv1$, belongs to~$\BKL3$ but not  to~$\BP3$.

\begin{prop} \cite{BirmanKoLee}
\label{P:Garside}
For each~$\nn$, the relations of Lemma~\ref{L:DualRelations} make a
presentation of~$\BKL\nn$ in terms of the generators~$\aa\pp\qq$, and
$\BKL\nn$ is a Garside monoid with Garside element
\begin{equation}
\label{E:DualGarsideElement}
\ddd\nn=\aa12\,\aa23\,...\,\aa\nno\nn \ (\  = \sig1\,\sig2 \,...\, \sig\nno\ ).
\end{equation}
\end{prop}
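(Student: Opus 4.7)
The plan splits into three tasks: verify the equality $\ddd\nn = \sig1\sig2\cdots\sig\nno$; check that the relations of Lemma~\ref{L:DualRelations} actually hold among the $\aa\indi\indii$ viewed as elements of~$\BB\nn$; and show that no further relations are needed and that the resulting presented monoid carries the structure of a Garside monoid with Garside element $\ddd\nn$. The first two tasks are routine. For the equality, specializing Definition~\ref{D:DualGenerator} to $\indii = \indip$ makes the ascending and descending ranges of Artin generators both empty, so $\aa\indi\indip = \sig\indi$ and the stated product is literally $\sig1\sig2\cdots\sig\nno$. For the relations, one verifies each one by direct manipulation using the presentation~\eqref{E:BnPresentation}: in the disjoint case of~\eqref{E:DualCommutativeRelation}, the words for $\aa\indi\indii$ and $\aa\indiii\indiv$ involve far-apart Artin generators that commute; the nested case requires a short telescoping using the braid relation; the triangle relations~\eqref{E:DualNonCommutativeRelation} reduce to $\sig\indi\sig\indip\sig\indi = \sig\indip\sig\indi\sig\indip$ after cancelling conjugations. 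These calculations are transparent in the cylindrical picture of Figure~\ref{F:Cylinder}, where the relations simply express planarity of non-crossing chords.

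The heart of the proof is the third task. Let $M$ denote the monoid abstractly defined by the presentation of Lemma~\ref{L:DualRelations}; the preceding verification yields a surjective monoid morphism $\pi \colon M \twoheadrightarrow \BKL\nn$. I would then apply the standard Garside-theoretic toolkit: observe atomicity of $M$ (immediate since all relations are length-preserving); establish cancellativity of $M$ via the subword-reversing technique of~\cite{Dehornoy:GG}; identify the set of left-divisors of $\ddd\nn$ in $M$ with the lattice of non-crossing partitions of $\{1,\ldots,\nn\}$ via the chord-diagram picture; verify that this set is closed under gcd and lcm in $M$ and generates $M$ as a monoid; and check that the left- and right-divisors of $\ddd\nn$ coincide, so that $\ddd\nn$ is a true Garside element (noting that conjugation by $\ddd\nn$ realizes a cyclic rotation of the chord picture, permuting the atoms). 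These properties jointly make $M$ into a Garside monoid, whence $M$ embeds in its enveloping group~$G$; a comparison of presentations then identifies $G$ with $\BB\nn$, so $\pi$ is injective and $\BKL\nn$ inherits the Garside structure of~$M$.

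The principal obstacle lies in the combinatorial analysis of the $\ddd\nn$-divisors: one must exhibit an explicit lattice isomorphism with the non-crossing partition lattice $NC_\nn$ (the Catalan number $C_\nn = |NC_\nn|$ matching the expected count of simples provides a sanity check), construct for each such partition a canonical factored word representative that can be read off the partition, and then verify the cube condition of word-reversing to obtain cancellativity and the lattice property. The substantive content of these steps exploits the full cyclic symmetry among all pairs of indices $\{\indi, \indii\}$ in the dual generators $\aa\indi\indii$---a symmetry absent from the asymmetric role of $\indi$ and $\indip$ in the Artin generator $\sig\indi$---and it is this symmetry that explains why the dual monoid has a richer and more uniform set of simples than~$\BP\nn$.
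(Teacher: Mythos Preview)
The paper does not prove this proposition at all: it is stated with the citation~\cite{BirmanKoLee} and no argument is given, the result being quoted from the literature as background. So there is no ``paper's own proof'' to compare against.

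Your outline is a reasonable sketch of the standard approach (Birman--Ko--Lee together with the later Garside-theoretic reformulation of Bessis and others): verify the relations, use length-homogeneity for atomicity, establish cancellativity via reversing or an explicit normal form, identify the simples with non-crossing partitions, and check the lattice and balance conditions for~$\ddd\nn$. That is indeed the route taken in the cited references. One small caveat: your phrase ``the triangle relations reduce to $\sig\indi\sig\indip\sig\indi = \sig\indip\sig\indi\sig\indip$ after cancelling conjugations'' understates the work needed when $\indi,\indii,\indiii$ are not consecutive---the verification is still elementary but requires a genuine induction on the gaps, not a single braid relation. Otherwise the proposal is an accurate summary of what a full proof entails, and appropriately flags the non-crossing-partition combinatorics as the substantive step.
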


Proposition~\ref{P:Garside} implies that the left and right-divisibility relations
in the dual braid monoid~$\BKL\nn$ have lattice properties, \ie, that any two
elements of~$\BKL\nn$ admit (left and right) greatest common divisors and
least common multiples. It also implies that
$\BB\nn$ is a group of fractions for the monoid~$\BKL\nn$, and that every
element of~$\BKL\nn$ admits  a distinguished decomposition similar to the
greedy normal form of~$\BP\nn$~\cite{BirmanKoLee}.  This decomposition involves
the so-called simple elements of~$\BKL\nn$, which are the divisors
of~$\ddd\nn$, and are in one-to-one correspondence with the non-crossing
partitions  of~$\{1, ..., \nn\}$ \cite{BirmanKoLee, Bessis}.

\subsection{The rotating automorphism}
\label{SS:CyclingAutomorphism}

An important role in the sequel will be played by the
so-called \emph{rotating automorphism}~$\ff\nn$
of~$\BKL\nn$. In every Garside monoid, conjugating under the
Garside element defines an automorphism~\cite{Dehornoy:GG}.
In the case of the monoid~$\BP\nn$ and its Garside
element~$\Delta_\nn$, the associated automorphism is the flip
automorphism that exchanges~$\sig\ii$ and~$\sig{\nn-\ii}$ for
each~$\ii$, thus an involution that corresponds to a symmetry in
braid diagrams. In the case of the dual monoid~$\BKL\nn$ and
its Garside element~$\ddd\nn$, the associated automorphism has
order~$\nn$, and it is similar to a rotation.

\begin{lemm}
\label{L:Rotation}
(See Figure~\ref{F:Rotation}.)
For each~$\br$ in~$\BKL\nn$, let $\ff\nn(\br)$ be defined by
\begin{equation}
\ddd\nn \, \br = \ff\nn(\br) \, \ddd\nn.
\end{equation}
Then, for all~$\indi, \indii$ with $1\le\indi <\indii\le\nn$, we have
\begin{equation}
\label{E:Rotation}
\ff\nn(\aa\indi\indii) = 
\begin{cases}
\aa{\indip}{\indiip}&\text{for $\indii \le \nno$},\\
\aa1{\indip}&\text{for $\indii = \nn$}.
\quad\qquad
\end{cases}
\end{equation}
\end{lemm}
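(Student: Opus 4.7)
Proposition~\ref{P:Garside} ensures that conjugation by the Garside element $\ddd\nn$ restricts to an automorphism of $\BKL\nn$, so the map $\ff\nn$ is well-defined on $\BKL\nn$. The plan is to identify $\ff\nn(\aa\indi\indii)$ on each generator by verifying directly in $\BB\nn$ the equality $\ddd\nn\,\aa\indi\indii = \aa{\indip}{\indiip}\,\ddd\nn$ (in the case $\indii \le \nno$) or $\ddd\nn\,\aa\indi\nn = \aa1{\indip}\,\ddd\nn$ (in the case $\indii = \nn$).

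The first step is to compute $\ff\nn$ on the Artin generators $\sig\jj = \aa\jj{\jjp}$. For $1 \le \jj \le \nnt$, the identity $\ddd\nn\,\sig\jj = \sig{\jjp}\,\ddd\nn$ follows by a standard braid manipulation: in $\sig1\cdots\sig\nno\,\sig\jj$, one commutes $\sig\jj$ leftward past the factors $\sig\kk$ with $\kk \ge \jj+2$, applies the braid relation $\sig\jj\sig{\jjp}\sig\jj = \sig{\jjp}\sig\jj\sig{\jjp}$, and then commutes $\sig{\jjp}$ leftward past the factors $\sig\kk$ with $\kk \le \jjo$ to reach $\sig{\jjp}\,\ddd\nn$. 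For $\jj = \nno$, the identity $\ddd\nn\,\sig\nno = \aa1\nn\,\ddd\nn$ reduces to the observation that both sides equal $\sig1\cdots\sig\nnt\,\sig\nno^2$, using that the telescoping product $\siginv\nnt\cdots\siginv1 \cdot \sig1\cdots\sig\nnt$ collapses to~$1$.

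When $\indii \le \nno$, I expand $\aa\indi\indii$ via formula~\eqref{E:DualGenerator} as a word in Artin generators; every index that appears is at most $\indiio \le \nnt$, so applying the homomorphism $\ff\nn$ factor by factor using $\ff\nn(\sig\jj) = \sig{\jjp}$ yields exactly the word for $\aa{\indip}{\indiip}$. When $\indii = \nn$, I proceed by decreasing induction on $\indi$ from $\nno$ down to $1$. The base case $\indi = \nno$ is the already-established identity $\ff\nn(\sig\nno) = \aa1\nn$. For the inductive step I use the relation $\aa\indi\nn = \sig\indi\,\aa{\indip}\nn\,\siginv\indi$, immediate from the definition. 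Applying $\ff\nn$ and invoking the induction hypothesis yields $\ff\nn(\aa\indi\nn) = \sig{\indip}\,\aa1{\indipp}\,\siginv{\indip}$, and it remains to check $\sig{\indip}\,\aa1{\indipp}\,\siginv{\indip} = \aa1{\indip}$. This is done by expanding $\aa1{\indipp}$, moving $\sig{\indip}$ rightward past $\sig1,\ldots,\sig\indio$ (which commute with it), applying the braid relation $\sig{\indip}\sig\indi\sig{\indip} = \sig\indi\sig{\indip}\sig\indi$, and performing the free cancellations $\sig\indi\siginv\indi = 1$ and $\sig{\indip}\siginv{\indip} = 1$.

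The main obstacle is the careful indexing in the second case: one must track how the outer conjugation by $\sig{\indip}$ interacts with the palindromic structure of $\aa1{\indipp}$ so that the commutations and cancellations align correctly. No new ideas are required---every step is an elementary application of the two Artin relations, together with the Garside framework that guarantees $\ff\nn$ is a well-defined automorphism of~$\BKL\nn$.
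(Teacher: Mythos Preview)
Your proof is correct and essentially matches the paper's, which simply states that the result is ``an easy verification'' from the definitions of~$\aa\indi\indii$ and~$\ddd\nn$ together with the relations of Lemma~\ref{L:DualRelations}. The only minor difference is that you carry out the verification entirely in the Artin presentation via the~$\sig\ii$, whereas the paper's one-line remark points toward the dual relations \eqref{E:DualCommutativeRelation}--\eqref{E:DualNonCommutativeRelation}; both routes amount to the same elementary computation.
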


The proof is an easy verification from~\eqref{E:DualGenerator}, \eqref{E:DualGarsideElement}
and the relations~\eqref{E:DualCommutativeRelation},
\eqref{E:DualNonCommutativeRelation}.  Note that the relation
$\ff\nn(\aa\indi\indii) = \aa{\indip}{\indiip}$ always holds
provided the indices are taken mod~$\nn$ and possibly switched so that, for
instance,  $\aa{\indip}{\nnp}$ means~$\aa1{\indip}$.

\begin{figure}[htb]
\begin{picture}(53,23)
\put(1.5,2.5){\includegraphics{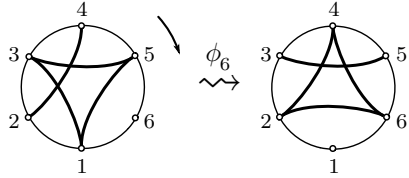}}
{\footnotesize
\put(9,0){$1$}
\put(0,6){$2$}
\put(0,15){$3$}
\put(9,21){$4$}
\put(18,15){$5$}
\put(18,6){$6$}
\put(42.5,0){$1$}
\put(33.5,6){$2$}
\put(33.5,15){$3$}
\put(42.5,21){$4$}
\put(51.5,15){$5$}
\put(51.5,6){$6$}}
\put(25,10.5){\huge $\leadsto$}
\put(25,15){ $\ff6$}
\end{picture}
\caption{{\sf \smaller Representation  of the rotating automorphism~$\ff\nn$ as a clockwise
rotation
 of the marked circle by~$2\pi/\nn$}.}
\label{F:Rotation}
\end{figure}

The formulas of~\eqref{E:Rotation} show that $\BKL\nn$ is globally
invariant under~$\ff\nn$. By contrast, note that $\BKL\nn$ is \emph{not}
invariant under the flip automorphism~$\flip\nn$: for instance,
$\flip3(\aa13)$, which is~$\sig2\sig1\siginv2$, does not belong to~$\BKL3$.

%
%

\section{The rotating normal form}
\label{S:NormalForm}

Besides the Garside structure, the main tool we shall use in
this paper is a new normal form for the elements of the dual
braid monoid~$\BKL\nn$, \ie, a new way of associating with
every element of~$\BKL\nn$ a distinguished word (in the
letters~$\aa\indi\indii$) that represents it.  This normal form is
called the rotating normal form, as it relies on the rotating
automorphism~$\ff\nn$ which we have seen is similar to a
rotation.

The rotating normal form is reminiscent of the alternating normal
form introduced in~\cite{Dehornoy:AF} for the case of the
monoid~$\BP\nn$---which is itself connected with Burckel's
approach of~\cite{Burckel:WO}. It is also closely connected with the normal
forms introduced in~\cite{Ito}, which are other developments, in a
different direction, of the alternating normal form.
As the properties of~$\BKL\nn$ and~$\ff\nn$ are essentially the same as those
of~$\BP\nn$ and~$\flip\nn$, adapting the results of~\cite{Dehornoy:AF} is
easy and, therefore, constructing the rotating normal form is
not very hard---what will be harder is identifying the needed
properties of rotating normal words, as will be done
in subsequent sections.

%
%

\subsection{The $\ff\nn$-splitting}

The basic observation of~\cite{Dehornoy:AF} is that
each braid  in the monoid~$\BP\nn$ admits a unique maximal right-divisor
that lies in the submonoid~$\BP\nno$.  A similar phenomenon occurs in
the dual monoid~$\BKL\nn$.

\begin{lemm}
\label{L:Tail}
For $\nn\ge3$, every braid~$\br$ of~$\BKL\nn$ admits a 
maximal right-divisor lying in~$\BKL\nno$. 
The latter is the unique right-divisor~$\br_1$
of~$\br$ such that $\br\br_1\inv$ has no  nontrivial $($\ie,
$\not=1)$ right-divisor lying in~$\BKL\nno$. 
\end{lemm}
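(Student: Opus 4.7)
The plan is to derive the lemma from the lattice structure of the Garside monoid~$\BKL\nn$ furnished by Proposition~\ref{P:Garside}. Let $S(\br)$ denote the set of right-divisors of~$\br$ lying in~$\BKL\nno$. Since $\BKL\nn$ is Noetherian and admits a Garside element, the set of all right-divisors of~$\br$ in~$\BKL\nn$ is finite, hence $S(\br)$ is finite; it also contains~$1$ and therefore is non-empty. It is enough to exhibit a maximum of~$S(\br)$ for right-divisibility: the characterization in terms of $\br\br_1\inv$ having no nontrivial right-divisor in~$\BKL\nno$ will then be a formal consequence.

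The main technical step is to show that $S(\br)$ is closed under the right-lcm operation of~$\BKL\nn$. The crucial auxiliary claim to establish is: for every $\uu, \vv \in \BKL\nno$, the right-lcm $\uu \vee \vv$ computed in~$\BKL\nn$ already lies in~$\BKL\nno$. I would prove this by subword reversing. Inspecting the relations listed in Lemma~\ref{L:DualRelations}, each relation of~$\BKL\nn$ that involves only generators~$\aa\indi\indii$ with $\indii \le \nno$ is itself a defining relation of~$\BKL\nno$, so a reversing process started from a word written in such generators never introduces a letter~$\aa\indi\nn$ and terminates inside~$\BKL\nno$. Alternatively, one can invoke the fact, implicit in~\cite{BirmanKoLee}, that $\BKL\nno$ embeds in~$\BKL\nn$ as a standard parabolic sub-Garside-monoid. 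This closure property is where I expect the delicate point to lie; once it is secured, everything else is formal.

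With closure available, for any $\uu, \vv \in S(\br)$ the right-lcm $\uu \vee \vv$ lies in~$\BKL\nno$ by the auxiliary claim and right-divides~$\br$ because $\br$ is a common right-multiple of $\uu$ and $\vv$; hence $\uu \vee \vv \in S(\br)$. By finiteness, the right-lcm of all elements of~$S(\br)$ is a maximum, which I call~$\br_1$. Writing $\br = \br_0\br_1$, any nontrivial right-divisor of~$\br_0$ lying in~$\BKL\nno$ would, multiplied on the right by~$\br_1$, strictly enlarge~$\br_1$ inside~$S(\br)$, contradicting maximality; therefore $\br\br_1\inv$ has no nontrivial right-divisor in~$\BKL\nno$. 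Conversely, if $\br_1'$ enjoys the same property, then $\br_1' \in S(\br)$ right-divides~$\br_1$; writing $\br_1 = \ww\,\br_1'$ with $\ww \in \BKL\nno$ shows that $\ww$ right-divides $\br\br_1'\inv$, so the assumption forces $\ww = 1$ and hence $\br_1 = \br_1'$, yielding uniqueness.
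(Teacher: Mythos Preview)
Your argument is correct and follows the same route as the paper: one checks that $\BKL\nno$ is closed under the lcm operation relevant to right-divisibility and then takes the maximum of the finite set~$S(\br)$; the paper packages this last step as a citation to Lemma~1.12 of~\cite{Dehornoy:AF}, while you unpack it directly. One terminological caution: what you call the ``right-lcm'' and ``common right-multiple'' of~$\uu$ and~$\vv$ is, in the paper's (and most Garside-theoretic) conventions, the \emph{left}-lcm, i.e., the least common \emph{left}-multiple---the element~$\ww$ with $\ww = \xx\uu = \yy\vv$; your mathematics is right, but aligning the wording with the paper will avoid confusion.
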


\begin{proof}
The submonoid~$\BKL\nno$ of~$\BKL\nn$ is closed under right-divisor and left-lcm. 
Hence we can apply Lemma 1.12 of~\cite{Dehornoy:AF}.
\end{proof}

\begin{defi}
The braid~$\br_1$ of Lemma~\ref{L:Tail} is called the 
\emph{$\BKL\nno$-tail} of~$\br$ and it is denoted by $\tail_\nno(\br)$.
\end{defi}

\begin{exam}
\label{X:Tail}
Let us compute the~$\BKL2$-tail of~$\ddd3^2$. 
As $\BKL2$ is generated by~$\aa12$, this $\BKL2$-tail is the maximal power
of~$\aa12$  that right-divides~$\ddd3^2$.  By definition, we have $\ddd3^2 =
\aa12\aa23\aa12\aa23$.  By
applying~\eqref{E:DualNonCommutativeRelation}  twice, we obtain  \[\ddd3^2
=
\aa12\aa23\aa13\aa12 = \aa12\aa13\aa12^2.\] As the word~$\aa12\aa13$ is alone
in its equivalence class, the braid it represents cannot be right-divisible
by~$\aa12$.  Therefore, the~$\BKL2$-tail of~$\ddd3^2$ is $\aa12^2$.
\end{exam}

In the context of the monoid~$\BP\nn$, one obtains a distinguished
decomposition for every braid in~$\BP\nn$ by considering the~$\BP\nno$-tail
and the~$\flip\nn\big(\BP\nno)$-tail alternatively, which is possible because
$\BP\nn$ is generated by $\BP\nno$ and $\flip\nn(\BP\nno)$. In our context
of~$\BKL\nn$, we shall  use the~$\BKL\nno$-tail,
the~$\ff\nn\big(\BKL\nno)$-tail,
$\Ldots$,  the~$\ff\nn^\nno\big(\BKL\nno)$-tail cyclically to obtain a
distinguished decomposition for every braid of~$\BKL\nn$.

In order to show that every braid in~$\BKL\nn$ admits such a
decomposition, we must check that the images of~$\BKL\nno$
under the powers of~$\ff\nn$ cover~$\BKL\nn$. Actually, iterating twice is
enough.

\begin{lemm}
\label{L:Covering}
For $\nn\ge3$, every generator~$\aa\indi\indii$ of~$\BKL\nn$ belongs
to~$\BKL\nno\cup\ff\nn\big(\BKL\nno\big)\cup\ff\nn^2\big(\BKL\nno\big)$.
\end{lemm}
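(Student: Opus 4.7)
The plan is to reduce the claim to a direct check on generators. By Lemma~\ref{L:Rotation}, $\ff\nn$ is an automorphism of $\BKL\nn$, so each $\ff\nn^k(\BKL\nno)$ is itself a submonoid of $\BKL\nn$, generated by the images under $\ff\nn^k$ of the letters $\aa\indi\indii$ with $1\le\indi<\indii\le\nno$. Hence it is enough to show that an arbitrary letter $\aa\indi\indii$ of $\BKL\nn$ either is such a letter (\ie, has $\indii\le\nno$), or is the image under $\ff\nn$ or $\ff\nn^2$ of one.

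I would then distinguish three cases according to $\indi$ and $\indii$. When $\indii\le\nno$, the letter $\aa\indi\indii$ already lies in $\BKL\nno$ and nothing is to prove. When $\indii=\nn$ and $\indi\ge 2$, the first clause of~\eqref{E:Rotation} yields $\ff\nn(\aa\indio\nno) = \aa\indi\nn$; here $\aa\indio\nno$ belongs to $\BKL\nno$ since $1\le\indio<\nno$, so $\aa\indi\nn$ belongs to $\ff\nn(\BKL\nno)$.

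The only remaining letter is $\aa1\nn$, which I would reach in two rotations starting from $\aa\nnt\nno$. Since $\nn\ge 3$, this is a legitimate generator of $\BKL\nno$. The first clause of~\eqref{E:Rotation} gives $\ff\nn(\aa\nnt\nno) = \aa\nno\nn$, and its second clause then gives $\ff\nn(\aa\nno\nn) = \aa1\nn$. Thus $\aa1\nn = \ff\nn^2(\aa\nnt\nno) \in \ff\nn^2(\BKL\nno)$, completing the case analysis.

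The main (and essentially only) obstacle is the isolated letter $\aa1\nn$. Geometrically, on the cylinder of Figure~\ref{F:Cylinder}, it corresponds to the chord that sits directly opposite to all the chords of $\BKL\nno$; it is invisible under a single rotation of $\BKL\nno$ and only enters the picture once the wrap-around clause of~\eqref{E:Rotation} has fired, which takes a second iteration. This is exactly why the statement demands three pieces, and also why two rotations suffice.
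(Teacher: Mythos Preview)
Your proof is correct and follows essentially the same route as the paper: the same three-case split on $(\indi,\indii)$, the same applications of~\eqref{E:Rotation}, and the same two-step rotation $\aa\nnt\nno \mapsto \aa\nno\nn \mapsto \aa1\nn$ for the exceptional letter. The only additions are your opening remark that it suffices to check generators and the closing geometric commentary, neither of which changes the argument.
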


\begin{proof}
For  $\indii\le\nno$, the braid~$\aa\indi\indii$ belongs to~$\BKL\nno$. 
Next, for $\indii = \nn$ and $\indi\ge 2$, we have $\aa\indi\nn = \ff\nn(\aa\indio\nno)$, which belongs to~$\ff\nn\big(\BKL\nno\big)$. 
Finally, for $\indi = 1$ and $\indii = \nn$, we find $\aa\indi\indii = \ff\nn(\aa\nno\nn) = \ff\nn^2(\aa\nnt\nno)$, which belongs to~$\ff\nn^2\big(\BKL\nno)$.
\end{proof}

By iterating the tail construction, we then associate with every braid
of~$\BKL\nn$ a finite sequence of braids of~$\BKL\nno$ that specifies it completely.

\begin{prop}
\label{P:Splitting}
Assume $\nn\ge3$. Then, for each nontrivial braid~$\br$ of~$\BKL\nn$,
there exists a unique sequence $(\br_\brdi,\Ldots,\br_1)$ in $\BKL\nno$
satisfying
$\br_\brdi \not= 1$ and
\begin{gather}
\label{E:SplittingDecomposition}
\br=\ff\nn^\brdio(\br_\brdi)\cdot...\cdot\ff\nn(\br_2)\cdot\br_1,\\
\label{E:SplittingConditionV1}
\text{for each~$\kk\ge1$, the braid~$\br_\kk$ is the~$\BKL\nno$-tail of~$\ff\nn^{\brdi\minus\kk}(\br_\brdi)\cdot...\cdot\br_\kk$}.
\end{gather}
\end{prop}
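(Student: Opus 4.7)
The plan is to construct the sequence $(\br_\brdi, \ldots, \br_1)$ by iterated extraction of a $\BKL\nno$-tail combined with the inverse automorphism~$\ff\nn\inv$. I would set $\br^{(0)} := \br$ and, inductively,
\[
\br_{\iip} := \tail_\nno(\br^{(\ii)}) \quad \text{and} \quad \br^{(\iip)} := \ff\nn\inv\bigl(\br^{(\ii)}\,\br_{\iip}\inv\bigr),
\]
which makes sense by Lemma~\ref{L:Tail} and the fact that $\ff\nn$ stabilises~$\BKL\nn$. A straightforward induction then gives the key identity
\[
\br = \ff\nn^{\ii}\bigl(\br^{(\ii)}\bigr) \cdot \ff\nn^{\iio}(\br_\ii) \cdots \ff\nn(\br_2) \cdot \br_1
\]
for every $\ii \ge 0$. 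I then call $\brdi$ the smallest index with $\br^{(\brdi)} = 1$; minimality forces $\br_\brdi \not= 1$ (else $\br^{(\brdio)} = \br_\brdi = 1$), and specialising the identity at $\ii = \brdi$ yields~\eqref{E:SplittingDecomposition}.

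The hard part is \emph{termination}. Since $\BKL\nn$ is homogeneous, the word length~$\lena\cdot$ is well defined on~$\BKL\nn$, is invariant under~$\ff\nn^{\pm1}$, and satisfies $\lena{\br^{(\iip)}} = \lena{\br^{(\ii)}} - \lena{\br_{\iip}}$, so failure of termination would mean an infinite run of trivial tails. To rule this out, I would invoke Lemma~\ref{L:Covering}: if three consecutive tails $\br_{\iip}$, $\br_{\iipp}$, $\br_{\ii\plus3}$ were all trivial, then $\br^{(\ii)}$ would admit no nontrivial right-divisor in $\BKL\nno$, in $\ff\nn(\BKL\nno)$, or in $\ff\nn^2(\BKL\nno)$; but by Lemma~\ref{L:Covering} every Birman--Ko--Lee generator belongs to this union, so $\br^{(\ii)}$ would admit no atom as a right-divisor, which in the Garside monoid~$\BKL\nn$ forces $\br^{(\ii)} = 1$. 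Hence every block of three steps strictly decreases~$\lena\cdot$, and the construction halts in at most~$3\lena\br$ steps. I expect this covering-based termination to be the main subtlety; everything else is bookkeeping.

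To finish, I would verify~\eqref{E:SplittingConditionV1} and uniqueness. Substituting $\ii = \kko$ into the key identity yields $\br^{(\kko)} = \ff\nn^{\brdi\minus\kk}(\br_\brdi) \cdots \ff\nn(\br_{\kkp}) \cdot \br_\kk$, and by construction $\br_\kk = \tail_\nno(\br^{(\kko)})$, which is exactly~\eqref{E:SplittingConditionV1}. For uniqueness, given a second sequence $(\br'_{\brdii}, \ldots, \br'_1)$ satisfying both conditions with $\br'_{\brdii} \not= 1$, I would prove $\br'_\kk = \br_\kk$ by induction on~$\kk$: condition~\eqref{E:SplittingConditionV1} at $\kk = 1$ forces $\br'_1 = \tail_\nno(\br) = \br_1$, and in the inductive step the element obtained from either decomposition after stripping $\br_1, \ldots, \br_\kk$ and applying the appropriate power of~$\ff\nn\inv$ is the same, so its $\BKL\nno$-tail (which by~\eqref{E:SplittingConditionV1} equals both $\br_{\kkp}$ and $\br'_{\kkp}$) yields equality at the next step. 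The conditions $\br_\brdi, \br'_{\brdii} \not= 1$ then force $\brdi = \brdii$.
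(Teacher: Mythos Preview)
Your proposal is correct and follows essentially the same approach as the paper: the same iterative construction $\br^{(\iip)}=\ff\nn\inv(\br^{(\ii)}\br_{\iip}\inv)$, the same key identity, and the same appeal to Lemma~\ref{L:Covering} to force termination. The only organisational difference is that the paper first observes the sequence of right-divisors stabilises for length reasons and then uses the covering lemma to show the remainder at that point is trivial, whereas you bound the number of consecutive trivial tails directly; both arguments are equivalent, and the uniqueness proof is the same induction in both cases.
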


\begin{proof}
Starting from $\br^{(0)}=\br$, we define two sequences, denoted $\br^{(\kk)}$ and $\br_\kk$, by
\begin{equation}
\label{E:P:Splitting:1}
\br^{(\kk)}=\ff\nn\inv\big(\br^{(\kko)}\,\br_\kk\inv\big)\quad\text{and}\quad\br_\kk = \tail_\nno(\br^{(\kko)})\quad\text{for}\quad\kk\ge1.
\end{equation}
Using induction on $\kk\ge1$, we prove the relations
\begin{gather}
\label{E:P:Splitting:2} \br=\ff\nn^\kk(\br^{(\kk)})\cdot\ff\nn^\kko(\br_\kk)\cdot...\cdot\br_1,\\
\label{E:P:Splitting:3} \tail_\nno\big(\ff\nn\big(\br^{(\kk)}\big)\big) = 1.
\end{gather}
Assume $\kk=1$. 
Lemma~\ref{L:Tail} implies that the~$\BKL\nno$-tail of~$\br\,\br_1\inv$ is trivial. 
Then, as $\ff\nn\big(\br^{(1)}\big)$ is equal to~$\br\,\br_1\inv$, the~$\BKL\nno$-tail of~$\ff\nn\big(\br^{(1)}\big)$ is trivial, and the relation $\br=\ff\nn(\br^{(1)})\cdot\br_1$ holds.
Assume $\kk\ge2$.
By construction of~$\br^{(\kk)}$, we have $\ff\nn(\br^{(\kk)})=\br^{(\kko)}\,\br_\kk\inv$, hence $\br^{(\kko)}=\ff\nn(\br^{(\kk)})\cdot\br_\kk$.  
Then we have the relation
\begin{equation}
\label{E:P:Splitting:4}
\ff\nn^\kko\big(\br^{(\kko)})=\ff\nn^\kk(\br^{(\kk)})\cdot\ff\nn^\kko\big(\br_\kk\big).
\end{equation}
On the other hand, by induction hypothesis, we have
\begin{equation}
\label{E:P:Splitting:5}
\br=\ff\nn^\kko(\br^{(\kko)})\cdot\ff\nn^\kkt(\br_\kko)\cdot...\cdot\br_1.
\end{equation}
 Substituting \eqref{E:P:Splitting:4} in \eqref{E:P:Splitting:5}, we obtain \eqref{E:P:Splitting:2}.
As $\br_\kk$ is the~$\BKL\nno$-tail of~$\br^{(\kko)}$, Lemma~\ref{L:Tail} gives \eqref{E:P:Splitting:3}.

By construction, the sequence of right-divisors of~$\br$,
\[\br_1,  \ \ff\nn(\br_2)\br_1, \ \ff\nn^2(\br_3)\ff\nn(\br_2)\br_1, \ ...\]
is non-decreasing for divisibility, and, therefore, for length reasons, it must be
eventually constant.  Hence, by right cancellativity of~$\BKL\nn$, there exists
$\brdi$ such that for
$\kk\ge\brdi$, we have
$\ff\nn^\kko(\br_\kk)\cdot...\cdot\br_1=\ff\nn^\brdio(\br_\brdi)\cdot...\cdot\br_1$. 
Then \eqref{E:P:Splitting:2} implies 
\[\br=\ff\nn^\brdi(\br^{(\brdi)})\ff\nn^\brdio(\br_\brdi)\cdot...\cdot\br_1,\]
with $\br_\brdi \not= 1$ whenever $\brdi$ is chosen to be minimal.

By definition of~$\brdi$, we have $\br_\kk = 1$, and therefore
$\ff\nn(\br^{(\kk)})=\br^{(\kko)}$ by~\eqref{E:P:Splitting:1}, for
$\kk\ge\brdip$. Then, we have
$\br^{(\brdi)}=\ff\nn(\br^{(\brdip)})$, 
$\ff\nn\inv(\br^{(\brdi)})=\ff\nn(\br^{(\brdi\plus2)})$ and
$\ff\nn^{\minus2}(\br^{(\brdi)})=\ff\nn(\br^{(\brdi\plus3)})$.
By~\eqref{E:P:Splitting:3}, the~$\BKL\nno$-tails of~$\br^{(\brdi)}$, 
$\ff\nn\inv(\br^{(\brdi)})$ and $\ff\nn^{\minus2}(\br^{(\brdi)})$ are trivial.
Hence, for every generator $\xx$ of $\BKL\nno$, the braid $\br^{(\brdi)}$ is
not right-divisible by $\xx$, nor is it either by $\ff\nn(\xx)$ or by
$\ff\nn^2(\xx)$. Then Lemma~\ref{L:Covering} implies that $\br^{(\brdi)}$ is
right-divisible by no $\aa\indi\indii$ with $1\le\indi<\indii \le \nn$, \ie, we
have
$\br^{(\brdi)} = 1$,
whence $\br=\ff\nn^\brdio(\br_\brdi)\cdot...\cdot\br_1$.

We prove now the uniqueness of $(\br_\brdi,\Ldots,\br_1)$.
Let $\ff\nn^{\brdiio}(\brbr_\brdii)\cdot...\cdot\ff\nn(\brbr_2)\cdot\ff\nn(\brbr_1)$ be a decomposition of~$\br$ satisfying $\brbr_\brdii\not=1$ and $\brbr_\kk=\tail_\nno(\ff\nn^{\brdii-\kk}(\brbr_\brdii)\cdot ...\cdot\brbr_\kk)$ for each $\kk\ge1$.
Using an induction on $\kk\ge1$, we prove the relations
\[\brbr_\kk=\br_\kk\quad\text{and}\quad\ff\nn^{\brdii\minus\kk\minus1}(\brbr_\brdii)\cdot ...\cdot\ff\nn(\brbr_\kkpp)\cdot\brbr_\kkp=\br^{(\kk)}.\] 
For $\kk=1$, by hypothesis, we have  $\br=\big(\ff\nn^\brdiio(\brbr_\brdii)\cdot ...\cdot\ff\nn(\brbr_2)\big)\cdot\brbr_1$, where $\brbr_1$ is the $\BKL\nno$-tail of $\br$, hence, by Lemma~\ref{L:Tail}, we have $\br_1=\brbr_1$ and $\br^{(1)}=\ff\nn^{\brdiit}(\brbr_\brdii)\cdot ...\cdot\ff\nn(\brbr_3)\cdot\brbr_2$.
By induction hypothesis, we have 
\[\big(\ff\nn^{\brdii\minus\kk\minus1}(\brbr_\brdii)\cdot ...\cdot\ff\nn(\brbr_\kkpp)\big)\cdot\brbr_\kkp=\br^{(\kk)},\]
 and by hypothesis about $\brbr_\kkp$, the braid $\brbr_\kkp$ is the $\BKL\nno$-tail of $\br^{(\kk)}$.
Then, by Lemma~\ref{L:Tail} again, we have $\brbr_\kkp=\br_\kkp$ and $\ff\nn^{\brdii\minus\kk\minus2}(\brbr_\brdii)\cdot ...\cdot\ff\nn(\brbr_{\kk+3})\cdot\brbr_\kkpp=\br^{(\kkp)}$.
We proved $\brbr_\kk = \br_\kk$ for
$\brdi\ge\kk\ge1$, hence we find
\begin{equation}
\label{E:P:Splitting:6}
\ff\nn^{\brdii\minus\brdi\minus1}(\brbr_\brdii)\cdot ...\cdot\ff\nn(\brbr_\brdipp)\cdot\brbr_\brdip=\br^{(\brdi)}
\end{equation} 
By definition of $\brdi$, we have $\br^{(\brdi)} = 1$, whereas, by hypothesis,
the braid $\brbr_\brdii$ is nontrivial. So \eqref{E:P:Splitting:6} may hold only
for $\brdii=\brdi$. 
\end{proof}

\begin{defi}
\label{D:Splitting}
The sequence $(\br_\brdi,\Ldots, \br_1)$ of  Proposition~\ref{P:Splitting} is
called the \emph{$\ff\nn$-splitting of~$\br$}.  Its length, \ie, the
parameter~$\brdi$, is called the \emph{$\nn$-breadth} of~$\br$.
\end{defi}

The idea of the~$\ff\nn$-splitting is very simple: starting 
with a braid~$\br$ of~$\BKL\nn$, we extract the maximal right-divisor that lies in~$\BKL\nno$, \ie, that leaves the~$\nn$th
strand unbraided, then we extract the maximal right-divisor of the
remainder that leaves the first strand unbraided, and so on
rotating by~$2\pi/\nn$ at each step---see Figure~\ref{F:Splitting}.

\begin{figure}[htb]
\begin{picture}(104,28)
\put(0,0){\includegraphics{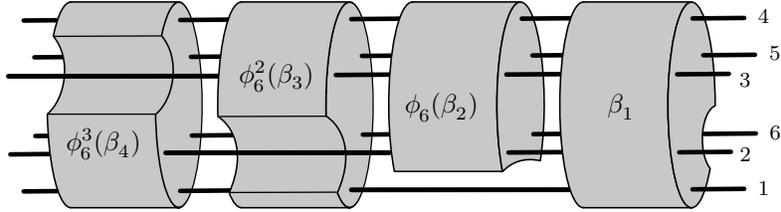}}
\put(80,13){$\br_1$} \put(53,13){$\ff6(\br_2)$}
\put(31,17){$\ff6^2(\br_3)$} \put(8,8){$\ff6^3(\br_4)$}
{\footnotesize
\put(101.55,9.5){$6$} \put(101.5,20){$5$}
\put(100,25){$4$} \put(97.5,16.5){$3$}
\put(97.6,6.5){$2$} \put(100,2){$1$}}
\end{picture}
\caption{{\sf \smaller The $\ff6$-splitting of a braid of~$\BKL6$.  
Starting from the right, we extract the maximal right-divisor that keeps the sixth
strand unbraided, then rotate by $2\pi/6$ and extract the maximal
right-divisor that keeps the first strand unbraided, etc.}}
\label{F:Splitting}
\end{figure}

In practice, we shall use the following criterion for recognizing a
$\ff\nn$-splitting.
\begin{lemm}
Condition~\eqref{E:SplittingConditionV1}
 is equivalent to
\begin{equation}
\label{E:SplittingConditionV2}
\text{for each $\kk\ge1$, the~$\BKL\nno$-tail of~$\ff\nn^{\brdi\minus\kk}(\br_\brdi)\cdot...\cdot\ff\nn(\br_\kkp)$ is trivial.}
\end{equation}
\end{lemm}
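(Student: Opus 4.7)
The plan is to reduce the equivalence to a single algebraic fact about the $\BKL\nno$-tail. Set
\[
Q_\kk := \ff\nn^{\brdi\minus\kk}(\br_\brdi) \cdot ... \cdot \ff\nn(\br_\kkp)
\]
for $1 \le \kk \le \brdi$, with the convention $Q_\brdi = 1$. Then the product appearing in~\eqref{E:SplittingConditionV1} is $Q_\kk \cdot \br_\kk$, so~\eqref{E:SplittingConditionV1} at index~$\kk$ reads $\br_\kk = \tail_\nno(Q_\kk \cdot \br_\kk)$ while~\eqref{E:SplittingConditionV2} at index~$\kk$ reads $\tail_\nno(Q_\kk) = 1$. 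Since $\br_\kk$ lies in~$\BKL\nno$, it suffices to establish the general claim: for every $A \in \BKL\nn$ and every $B \in \BKL\nno$, the equality $\tail_\nno(A \cdot B) = B$ is equivalent to $\tail_\nno(A) = 1$.

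For the \emph{if} direction I will assume $\tail_\nno(A) = 1$. As $B$ is a right-divisor of~$A \cdot B$ lying in $\BKL\nno$, it right-divides the maximal such divisor $T := \tail_\nno(A \cdot B)$, so I write $T = C \cdot B$; the closure of $\BKL\nno$ under right-divisors in $\BKL\nn$, which is exactly the property underlying Lemma~\ref{L:Tail}, puts $C$ in~$\BKL\nno$. Right-cancellation in the Garside monoid~$\BKL\nn$ then exhibits $C$ as a $\BKL\nno$-right-divisor of~$A$, forcing $C = 1$ and $T = B$. For the converse, I will assume $\tail_\nno(A \cdot B) = B$ and factor $A = A' \cdot \tail_\nno(A)$; then $\tail_\nno(A) \cdot B$ lies in $\BKL\nno$ and right-divides $A \cdot B$, hence right-divides $B$, and one cancellation forces $\tail_\nno(A) = 1$.

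Applying this claim at each index~$\kk < \brdi$ with $A = Q_\kk$ and $B = \br_\kk$ yields the desired equivalence, and the boundary case $\kk = \brdi$ is trivial since $Q_\brdi = 1$ and $\br_\brdi \in \BKL\nno$ makes both conditions automatic. I do not anticipate any real obstacle: the only non-Garside ingredient used is the closure of~$\BKL\nno$ under right-divisors, which is already implicit in Lemma~\ref{L:Tail}, and everything else is pure right-cancellation inside~$\BKL\nn$.
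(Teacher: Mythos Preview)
Your argument is essentially the paper's: both reduce the equivalence, index by index, to the characterization in Lemma~\ref{L:Tail} that $\br_1 = \tail_\nno(\br)$ holds if and only if $\br\,\br_1^{-1}$ has trivial $\BKL\nno$-tail; the paper simply cites that lemma, while you spell it out as your ``general claim''. One small slip: when you write $T = C \cdot B$ and invoke ``closure of $\BKL\nno$ under right-divisors'' to place $C$ in $\BKL\nno$, note that $C$ is a \emph{left}-divisor of $T$, so the property you actually need is closure of $\BKL\nno$ under left-divisors in $\BKL\nn$ (equivalently, that divisibility in $\BKL\nno$ is reflected from $\BKL\nn$), which is also true for this parabolic submonoid but is not the right-divisor closure you name.
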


\begin{proof}
By Lemma~\ref{L:Tail}, for every $\kk\ge1$, the braid $\br_\kk$ is the $\BKL\nno$-tail of the braid~$\ff\nn^{\brdi-\kk}(\br_\brdi)\cdot...\cdot\ff\nn(\br_\kkp)\cdot\br_\kk$ if and only if the $\BKL\nno$-tail of $\ff\nn^{\brdi-\kk}(\br_\brdi)\cdot...\cdot\ff\nn(\br_\kkp)$ is trivial.
Hence \eqref{E:SplittingConditionV1} and \eqref{E:SplittingConditionV2} are
equivalent.
\end{proof}

As the notion of $\ff\nn$-splitting is both new and fundamental for the sequel,
we mention  several examples.

\begin{exam}
\label{X:GeneratorSplitting}
Let us first determine the~$\ff\nn$-splitting of the Birman--Ko--Lee generators
of~$\BKL\nn$. For $\indii\le\nno$, the braid~$\aa\indi\indii$ belongs
to~$\BKL\nno$, then its $\ff\nn$-splitting is $(\aa\indi\indii)$. As
$\aa\indi\nn$ does not lie in~$\BKL\nno$, the rightmost entry in
its~$\ff\nn$-splitting must be~$1$. Now, we have
$\ff\nn\inv(\aa\indi\nn)=\aa\indio\nno$ for
$\indi\ge2$. Hence, for $\indi \ge 2$, the~$\ff\nn$-splitting of~$\aa\indi\nn$
is
$(\aa\indio\nno,1)$.  Finally, the braids $\aa1\nn$ and
$\ff\nn\inv(\aa1\nn)=\aa\nno\nn$ do not lie in~$\BKL\nno$, but
$\ff\nn^{\minus2}(\aa1\nn)=\aa\nnt\nno$ does. So the~$\ff\nn$-splitting
of~$\aa1\nn$ is $(\aa\nnt\nno,1,1)$.  To summarize, the~$\ff\nn$-splitting
of~$\aa\indi\indii$ is
\begin{equation}
\begin{cases}
(\aa\indi\indii)&\text{for $\indi<\indii\le\nno$},\\
(\aa\indio\nno,1)&\text{for $2\le\indi$ and $\indii=\nn$},\\
(\aa\nnt\nno,1,1)&\text{for $\indi=1$ and $\indii=\nn$}.
\end{cases}
\end{equation}
\end{exam}

\begin{exam}
\label{X:GarsideElementSplitting}
Let us compute the~$\ff3$-splitting of~$\ddd3^2$.
With the notation of the proof of~Proposition~\ref{P:Splitting}, we obtain
\begin{align*}
\br^{(0)}&=\br=(\aa12\,\aa23)^2 \qquad&& \br_1=\tail_2(\br^{(0)})=\aa12^2\\
\br^{(1)}&=\ff3\inv\big(\br^{(0)}\br_1\inv\big)=\ff3\inv(\aa12\aa13)=\aa13\aa23 \qquad&& \br_2=\tail_2(\br^{(1)})=1\\
\br^{(2)}&=\ff3\inv\big(\br^{(1)}\br_2\inv\big)=\ff3\inv(\aa13\aa23)=\aa23\aa12 \qquad&& \br_3=\tail_2(\br^{(2)})=\aa12,\\
\br^{(3)}&=\ff3\inv\big(\br^{(2)}\br_3\inv\big)=\ff3\inv(\aa23)=\aa12 \qquad&& \br_4=\tail_2(\br^{(3)})=\aa12,\\
\br^{(4)}&=\ff3\inv\big(\br^{(3)}\br_4\inv\big)=1 
\end{align*}
and we stop as the remainder~$\br^{(4)}$ is trivial.
Thus the~$\ff3$-splitting of~$\ddd3^2$ is the sequence~$(\aa12,\aa12,1,\aa12^2)$.

\end{exam}

%
%

\subsection{The rotating normal form}
\label{S:RotatingNF}

Using the~$\ff\nn$-splitting, we shall now construct  a unique normal form for the elements of~$\BKL\nn$, \ie, we identify for each braid~$\br$ in~$\BKL\nn$ a distinguished word that represents~$\br$.

The principle is as follows. First, each braid of~$\BKL2$ is represented by a unique word~$\aa12^\kk$.
Then, the~$\ff\nn$-splitting provides a distinguished decomposition for every braid of~$\BKL\nn$ in terms of braids of~$\BKL\nno$.
So, using induction on~$\nn$, we can define a normal form for~$\br$ 
in~$\BKL\nn$ starting with the normal form of the entries in the~$\ff\nn$-splitting
of~$\br$. 

For the rest of this paper, it will be convenient to take the following conventions
for braid words and the braids they represent.

\begin{defi}
A word on the letters~$\sig\ii$ (\emph{resp.} on the letters~$\aa\indi\indii$) is 
called a
\emph{$\sigg$-word} (\emph{resp.} an \emph{$a$-word}). 
The set of all positive $\nn$-strand $a$-words is denoted by~$\WBKL\nn$.
The braid
represented by an $a$-word or a $\sigg$-word $\ww$ is denoted by $\wwt$. For $\ww$ a
$\sigg$-word or an $a$-word and $\www$ a $\sigg$-word or an $a$-word, we say that $\ww$ is
equivalent  to~$\www$, denoted $\ww\equiv\www$, if we have $\wwt =
\wwwt$.
\end{defi}

According to the formulas~\eqref{E:Rotation}, $\ff\nn$ maps
each braid~$\aa\indi\indii$ to another similar braid~$\aa\indiii\indiv$. Using
this observation, we can introduce the alphabetical homomorphism, still
denoted~$\ff\nn$, that maps the letter~$\aa\indi\indii$ to the corresponding
letter~$\aa\indiii\indiv$, and extends to every $a$-word. Note that, in this
way, if the $a$-word~$\ww$ represents the braid~$\br$, then
$\ff\nn(\ww)$ represents~$\ff\nn(\br)$.

\begin{defi}
\label{D:RotatingNormalForm}
$(i)$ For $\br$ in~$\BKL2$, the \emph{$\ff2$-rotating normal form} of~$\br$
is defined to be the unique $a$-word~$\aa12^\kk$ that represents~$\br$.

$(ii)$ For $\br$ in~$\BKL\nn$ with $\nn \ge 3$, the \emph{$\ff\nn$-rotating normal form} of~$\br$ is defined to be the~$a$-word
$\ff\nn^\brdio(\ww_\brdi) \,... \,\ww_1$, where $(\br_\brdi,\Ldots,\br_1)$ is
the~$\ff\nn$-splitting of~$\br$ and $\ww_\kk$ is the
$\ff\nno$-rotating normal form of~$\br_\kk$ for each $\kk$.
\end{defi}

As the~$\ff\nn$-splitting of a braid~$\br$ lying in~$\BKL\nno$ is the length $1$ sequence~$(\br)$, the~$\ff\nn$-normal form and the $\ff\nno$-normal form of~$\br$ coincide.
Therefore, we can drop the subscript~$\nn$, and
speak of the \emph{rotating normal form}, or, simply, of the \emph{normal
form}, of a braid of~$\BKL\nn$. We naturally say that a positive
$a$-word is \emph{normal} if it is the normal form of the braid its represents.

\begin{exam}
\label{X:CyclingNormalForm}
Let us compute the normal form of~$\ddd4^2$.  
First, we check the equality $\ddd4^2=\aa12\,\aa14\,\ddd3^2$. Then,
the~$\ff4$-splitting of~$\ddd4^2$ turns out to be $(\aa23,\aa23,1,\ddd3^2)$.
The $\ff3$-splitting of~$\aa23$ is $(\aa12,1)$, and, therefore, its normal form
is $\ff3(\aa12)$, which is $\aa23$. Next, we saw in
Example~\ref{X:GarsideElementSplitting} that the~$\ff3$-splitting
of~$\ddd3^2$ is $(\aa12,\aa12,1,\aa12^2)$. Therefore, its normal form is
$\ff3^3(\aa12)\cdot\ff3^2(\aa12)\cdot\ff3(1)\cdot\aa12\aa12$, hence
$\aa12\cdot\aa13\cdot\varepsilon\cdot\aa12\aa12$, \ie,
$\aa12\aa13\aa12\aa12$. So, finally, the normal form of~$\ddd4^2$ is
$$\ff4^3(\aa23)\cdot\ff4^2(\aa23)\cdot\ff4(1)\cdot\aa12\aa13\aa12\aa12,$$
hence $\aa12\cdot\aa14\cdot\varepsilon\cdot\aa12\aa13\aa12\aa12$, \ie,
$\aa12\aa14\aa12\aa13\aa12\aa12$.
\end{exam}

As the relations of Lemma~\ref{L:DualRelations} preserve the
length, positive equivalent $a$-words always have the same length. Hence, if
$\www$ is the unique normal word equivalent to some word~$\ww$
of~$\WBKL\nn$, then $\ww$ and $\www$ have the same length.

\begin{prop}
\label{P:AlgoC}
For each length $\ll$ word~$\ww$ of $\WBKL\nn$, the normal form of
$\wwt$ can be computed in at most $O(\ll^2)$ elementary steps.
\end{prop}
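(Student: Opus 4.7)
The plan is to proceed by induction on $n$, turning Definition~\ref{D:RotatingNormalForm} into a recursive algorithm. The base case $n = 2$ is immediate, since every word in $\WBKL{2}$ already has the form $a_{1,2}^k$ and coincides with its own normal form.

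For the induction step, the key primitive is \emph{tail extraction}: given a length-$\ell$ word $\ww \in \WBKL{n}$, rewrite it as an equivalent word $\www \cdot u_1$, where $u_1$ is the normal form of $\tail_{n-1}(\overline{\ww})$ and $\www$ represents $\overline{\ww}\cdot\overline{u_1}^{-1}$. I would implement this by scanning $\ww$ and repeatedly applying the relations~\eqref{E:DualCommutativeRelation} and~\eqref{E:DualNonCommutativeRelation} to push letters of the form $a_{i,n}$ leftward past those lying in $\BKL{n-1}$; a careful bookkeeping---for instance exploiting the non-crossing partition structure of simple elements---should yield an $O(\ell)$ bound per extraction.

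With tail extraction in hand, the $\ff{n}$-splitting of $\overline{\ww}$ is produced exactly as in the proof of Proposition~\ref{P:Splitting}: extract $b_k$, apply $\ff{n}^{-1}$ letterwise (an $O(\ell)$ operation by Lemma~\ref{L:Rotation}) to the residue, and iterate. Because the dual relations preserve length, one has $\sum_k |b_k| = \ell$, so the iteration terminates after $p \le \ell$ steps, and the cumulative cost of all tail extractions is bounded by $\sum_k O\bigl(\ell - \sum_{j<k}|b_j|\bigr) \le O(p\,\ell) = O(\ell^2)$. Each entry $b_k$ lies in $\BKL{n-1}$ and, by the induction hypothesis, its normal form is obtained in $O(|b_k|^2)$ steps; since $\sum_k |b_k|^2 \le \bigl(\sum_k |b_k|\bigr)^2 = \ell^2$, the total recursive cost is also $O(\ell^2)$. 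Finally, assembling the output $\ff{n}^{p-1}(u_p)\cdots\ff{n}(u_2)\cdot u_1$ by at most $p-1$ further letterwise applications of $\ff{n}$ adds only $O(\ell^2)$ more work.

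The main obstacle will be making the $O(\ell)$ bound for a single tail extraction truly rigorous under a precise ``elementary step'' model. A naive strategy that slides every letter $a_{i,n}$ past everything standing to its right could require $\Omega(\ell^2)$ rewrites per extraction, which would only yield an $O(\ell^3)$ overall bound. To close the gap I would pursue either a single left-to-right sweep that touches each position of $\ww$ a bounded number of times---using that only $n-1$ distinct ``$n$-involving'' generators $a_{i,n}$ exist and that simple elements of $\BKL{n}$ correspond to non-crossing partitions---or an amortised analysis that charges the work of the $k$-th extraction against the letters it actually removes, so that the total rewriting cost across the whole splitting remains $O(\ell^2)$.
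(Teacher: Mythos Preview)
Your proposal follows exactly the paper's approach: induction on~$n$, with tail extraction claimed to cost $O(\ell)$, whence the $\ff\nn$-splitting costs $O(\ell^2)$, and the recursion on the entries (whose total length is~$\ell$) contributes another $O(\ell^2)$. The paper's proof is in fact briefer than yours---it simply asserts the $O(\ell)$ tail bound without argument---so the obstacle you flag is one the paper itself leaves to the reader.
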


\begin{proof}
Computing the $\BKL\nno$-tail
of the braid~$\wwt$ can be done in $O(\ll)$~steps. Hence computing the
$\ff\nn$-splitting can be done in~$O(\ll^2)$ steps. Taking into account the
observation that the lengths of equivalent words are equal, one deduces using
an easy induction on~$\nn$ that computing the rotating normal form of
$\wwt$ can be done in $O(\ll^2)$~steps.
\end{proof}

We considered above the question of going from~$\ww$ to an equivalent
normal word, thus first identifying the $\ff\nn$-splitting of~$\ww$ and then
finding the normal form of the successive entries. Conversely, when we start
with a normal word~$\ww$, it is easy to isolate the successive entries of the
$\ff\nn$-splitting of the braid~$\wwt$, \ie, to group the successive letters in
blocks.

Hereafter, if $\ww$ is a $\nn$-normal word , the (unique) sequence of $\nno$-normal words of
$(\ww_\brdi, \Ldots, \ww_1)$ such that $(\overline{\ww_\brdi}, \Ldots, \overline{\ww_1})$
is the $\ff\nn$-splitting of~$\wwt$ is naturally called the \emph{$\ff\nn$-splitting} of~$\ww$.

\begin{lemm}
\label{L:AlgoS}
Assume $\nn \ge 3$. For each normal word~$\ww$ of $\WBKL\nn$, the
$\ff\nn$-splitting of~$\ww$ can be computed in at most $O(\ll)$
elementary steps.
\end{lemm}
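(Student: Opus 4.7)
The plan is to give a one-pass, right-to-left scanning algorithm that reads the block boundaries of the $\ff\nn$-splitting off the letters of~$\ww$ directly, and to justify its correctness through the maximality of the tail from Lemma~\ref{L:Tail}. The essential syntactic observation, obtained immediately from the formulas~\eqref{E:Rotation}, is that the submonoid $\ff\nn^{\kko}(\BKL\nno)$ is generated by those letters $\aa\indi\indii$ whose two indices both differ from a certain \emph{forbidden index} $f_\kk := (\kko) \bmod \nn$, with the convention $0 \equiv \nn$. Consequently, each block $\ff\nn^{\kko}(\ww_\kk)$ of a normal word consists entirely of letters avoiding~$f_\kk$.

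The algorithm I propose is as follows: initialize $\kk := 1$, scan $\ww$ from right to left, and for each letter $\aa\indi\indii$ encountered, repeatedly increment $\kk$ (thus possibly skipping over empty blocks) as long as $f_\kk \in \{\indi, \indii\}$, then assign the letter to the block currently indexed by~$\kk$. At the end, the positions of $\ww_1, \ff\nn(\ww_2), \Ldots, \ff\nn^{\brdio}(\ww_\brdi)$ inside~$\ww$ are determined. Each letter is examined in constant time; the variable $\kk$ is only ever incremented and never exceeds $\brdi \le \ll$, so the total number of increments is $O(\ll)$, yielding the required $O(\ll)$ running time.

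The crux is correctness. Writing $\ww = X \cdot \ff\nn^{\kko}(\ww_\kk)$ where $X$ contains the blocks $\ff\nn^{\brdio}(\ww_\brdi) \cdots \ff\nn^{\kk}(\ww_\kkp)$, the key claim to establish is that, whenever $X$ is non-empty, its rightmost letter must use the index~$f_\kk$. For $\kk=1$ this is immediate from Lemma~\ref{L:Tail}: were that letter a generator $\aa\indi\indii$ with $\indii \le \nno$, prepending it to $\ww_1$ would produce a $\BKL\nno$-right-divisor of~$\wwt$ strictly greater than~$\br_1$, contradicting maximality of the tail. For general~$\kk$, the same argument applies to the sub-product $\ff\nn^{\brdi\minus\kk}(\br_\brdi) \cdots \br_\kk$, for which $\br_\kk$ is the $\BKL\nno$-tail by the splitting condition~\eqref{E:SplittingConditionV1}, and unrotating by $\ff\nn^{\kko}$ then transfers the conclusion back to the original word. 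Since each letter $\aa\indi\indii$ involves only two indices, at most one empty block can separate two consecutive non-empty ones, so the successive increments of $\kk$ triggered by a single letter never exceed two, and the algorithm is well-defined.

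The main technical obstacle lies in the conjugation argument for general~$\kk$: one must carefully check that the correspondence between $\BKL\nno$-right-divisors of the rotated sub-product and maximal suffixes of the ambient word in $\ff\nn^{\kko}(\BKL\nno)$-generators behaves well, so that maximality of the tail translates into the syntactic forbidden-index criterion in the original frame. Once this correspondence is set up cleanly, the algorithmic and complexity parts follow routinely.
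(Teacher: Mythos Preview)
Your proposal is correct and follows essentially the same approach as the paper's proof: both identify the forbidden index for $\ff\nn^{\kko}(\BKL\nno)$ via~\eqref{E:Rotation} and read the block boundaries by a single right-to-left scan, greedily taking the maximal suffix in each successive rotated submonoid. The paper's proof is terser and simply asserts that the maximal suffix of~$\ww$ in $\WBKL\nno$ equals~$\ww_1$ (and so on), whereas you supply the justification explicitly via the tail-maximality condition~\eqref{E:SplittingConditionV1}; your observation that a letter carries only two indices, hence triggers at most two increments of~$\kk$, is a nice explicit bound that the paper leaves implicit.
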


\begin{proof}
 By definition of $\ff\nn$, a generator
$\aa\indi\indii$ lies in $\ff\nn^\kk(\BKL\nno)$ if and only if
we have $\indi\not=\kk \mod \nn$ and
$\indii\not=\kk \mod \nn$. Therefore, given a normal word~$\ww$ in
$\WBKL\nn$, we can directly read the~$\ff\nn$-splitting $(\ww_\brdi,\Ldots,\ww_1)$ of~$\ww$. 
Indeed, reading $\ww$ from the right, $\ww_1$ is the
maximal suffix of~$\ww$ that lies in
$\WBKL\nno$, then $\ff\nn(\ww_2)$ is the maximal suffix of the remaining braid
lying in~$\ff\nn(\BKL\nno)$, etc, until the empty word is left.
\end{proof}

\begin{exam}
\label{X:RotatingToSplitting}
Let us consider the normal word $\ww=\aa12\,\aa14\,\aa23\,\aa12$ and 
compute the $\ff4$-splitting of~$\ww$. Reading
$\ww$ form the right, we find that the maximal suffix of $\ww$ containing no letter $\aa\indi\indii$ with $\indi=0 \mod \nn$ or $\indii=0 \mod \nn$ 
is $\aa23\,\aa12$. The latter is the maximal suffix of $\ww$ lying
in~$\WBKL3$, so we have
$\ww_1=\aa23\,\aa12$.
Repeating this process, one would easily find that the $\ff4$-splitting of~$\ww$ is $(\ff4^{-3}(\aa12),\, \ff4^{-2}(\aa14),\,\ff4\inv(1),\, \aa23\,\aa12)$, hence the sequence~$(\aa23, \aa23, 1, \aa23\,\aa12)$.
 \end{exam}

%
%

\section{Ladders}
\label{S:Ladders}

The $\ff\nn$-splitting operation associates with every braid in~$\BKL\nn$ a
finite sequence of braids in~$\BKL\nno$. Now, in the other direction, every
sequence of braids in~$\BKL\nno$ need not be the~$\ff\nn$-splitting of a
braid in~$\BKL\nn$. The aim of this section is to establish constraints that are
satisfied by  the entries of a $\ff\nn$-splitting. The main constraint is that
a $\ff\nn$-splitting necessarily contains what we call ladders, which are
sequences of (non-adjacent) letters~$\aa\indi\indii$ whose indices~$\indii$
make an increasing sequence (the bars of the ladder).

%
%

\subsection{Last letters}

We begin with some elementary observations about the last letters of the
normal forms of the entries in a $\ff\nn$-splitting.

 \begin{defi}
For each nonempty word~$\ww$, the last letter of $\ww$ is denoted by
$\last{\ww}$. Then, for each nontrivial braid~$\br$  in~$\BKL\nn$, we define
the
\emph{last letter} of~$\br$, denoted~$\last\br$, to be the last letter in the
normal form of~$\br$.
\end{defi}

\begin{lemm}
\label{L:LastLetter}
Assume $\nn\ge3$, and let $(\br_\brdi,\Ldots,\br_1)$ be a
$\ff\nn$-splitting.

$(i)$ For $\kk\ge 2$, the letter $\last{\br}_\kk$ is
$\aa\indi\nno$  for some~$\indi$, unless $\br_\kk = 1$ holds.

$(ii)$ For $\kk\ge3$, we have $\br_\kk \not= 1$.

$(iii)$ For $\kk\ge 2$, if the normal form of~$\br_\kk$ is
$\ww\,\aa\nnt\nno$  with
$\ww$ nonempty, then the letter $\last\ww$ is
$\aa\indi\nno$ for some~$\indi$.
\end{lemm}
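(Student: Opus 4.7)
I plan to prove the three parts together by induction on~$\nn$. The main tools are the maximality of the~$\BKL\nno$-tail, the property~\eqref{E:P:Splitting:3} that $\ff\nn(\br^{(\kk)})$ has trivial~$\BKL\nno$-tail, the covering Lemma~\ref{L:Covering}, and the braid relations of Lemma~\ref{L:DualRelations}.

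For part~(i), I argue by contradiction. Suppose $\br_\kk\neq 1$ with $\kk\ge 2$ and $\last{\br_\kk}=\aa\indi\indii$ with $\indii\le\nnt$, so that $\aa\indi\indii\in\BKL\nnt$. Since $\ff\nn(\br^{(\kk)})$ has trivial $\BKL\nno$-tail, its last letter, when nontrivial, must involve strand~$\nn$, say $\aa\indiii\nn$ for some~$\indiii$. Because $\indii<\nn$, the two intervals $[\indiii,\nn]$ and $[\indi,\indii]$ are disjoint, nested, or share an endpoint, so an instance of the commutation or triangle relation of Lemma~\ref{L:DualRelations} applies. Using this, I push the letter~$\aa\indiii\nn$ through the $\BKL\nnt$-suffix of~$\br_\kk$ and exhibit a $\BKL\nno$-right-divisor of~$\br^{(\kko)}$ strictly extending~$\br_\kk$, contradicting maximality.

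For part~(ii), assume by contradiction $\br_\kk=1$ for some $\kk\ge 3$. Then $\br^{(\kko)}$ has trivial $\BKL\nno$-tail (from $\br_\kk=1$) and, by~\eqref{E:P:Splitting:3}, also trivial $\ff\nn\inv(\BKL\nno)$-tail. Since $\kk\ge 3$, applying the analogous identity at index~$\kkt$ and chasing through the recursion defining~$\br^{(\kkt)}$ produces a third, independent triviality condition on~$\br^{(\kko)}$. Combined with Lemma~\ref{L:Covering}---which expresses every atom of~$\BKL\nn$ as lying in one of three $\ff\nn$-rotated copies of~$\BKL\nno$---these conditions imply that $\br^{(\kko)}$ admits no atom right-divisor, hence $\br^{(\kko)}=1$. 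Propagating this back through the recursion forces $\br_\pp=1$, contradicting the minimality in Proposition~\ref{P:Splitting}.

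For part~(iii), I induct on~$\nn$. Since $\aa\nnt\nno\notin\BKL\nnt$, the assumption $\last{\br_\kk}=\aa\nnt\nno$ forces the rightmost entry~$\br_\kk^1$ of the $\ff\nno$-splitting of~$\br_\kk$ to be trivial, and the identity $\ff\nno(\last{\br_\kk^2})=\aa\nnt\nno$ yields $\last{\br_\kk^2}=\aa{\nn\minus3}{\nnt}$. If $\br_\kk^2$ has more than one letter in its normal form, the induction hypothesis for~(iii) at level~$\nno$ shows that its second-to-last letter is of the form $\aa\indi\nnt$, whose $\ff\nno$-image has second index~$\nno$, as required. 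If instead $\br_\kk^2=\aa{\nn\minus3}{\nnt}$ alone, then by part~(ii) at level~$\nno$, $\br_\kk^3\neq 1$, and by part~(i), $\last{\br_\kk^3}=\aa\indi\nnt$ for some~$\indi$; a case analysis combining the splitting condition~\eqref{E:SplittingConditionV2} for the $\ff\nno$-splitting of~$\br_\kk$ with the triangle relation $\aa{1}{\nnt}\,\aa\nnt\nno=\aa{1}{\nno}\,\aa{1}{\nnt}$ rules out all values of~$\indi$ except $\indi=\nn\minus3$, for which $\ff\nno^2(\aa{\nn\minus3}{\nnt})=\aa{1}{\nno}$, of the required form. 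The main obstacle is part~(i): the maximality of~$\br_\kk$ must be defeated by a careful geometric manipulation of one letter from~$\ff\nn(\br^{(\kk)})$, using only the dual-braid commutation and triangle relations.
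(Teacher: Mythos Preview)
Your plan has genuine gaps in parts~(i) and~(ii), and is needlessly intricate in part~(iii).

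\textbf{Part (i).} The assertion that the chords $[\indiii,\nn]$ and $[\indi,\indii]$ with $\indii\le\nnt$ are always disjoint, nested, or adjacent is false: when $\indi<\indiii<\indii$ they cross, and no relation of Lemma~\ref{L:DualRelations} applies. More fundamentally, even in the good cases, pushing $\aa\indiii\nn$ to the right of letters of~$\br_\kk$ produces a letter still of the form~$\aa{\indiii'}\nn\notin\BKL\nno$, so you never exhibit a $\BKL\nno$-right-divisor of~$\br^{(\kko)}$ longer than~$\br_\kk$. The paper's argument is immediate: by~\eqref{E:SplittingConditionV2} the product $\ff\nn^{\brdi-\kk+1}(\br_\brdi)\cdots\ff\nn(\br_\kk)$ has trivial $\BKL\nno$-tail, so its right-divisor $\ff\nn(\last{\br_\kk})$ cannot lie in~$\BKL\nno$, forcing $\last{\br_\kk}=\aa{..}\nno$. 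You are looking at the wrong remainder: $\ff\nn(\br^{(\kk)})$ tells you about~$\br_\kkp$, not~$\br_\kk$.

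\textbf{Part (ii).} You cannot extract a third independent tail-triviality condition on~$\br^{(\kko)}$ from index~$\kkt$: relation~\eqref{E:P:Splitting:3} at~$\kkt$ concerns $\ff\nn(\br^{(\kkt)})=\ff\nn^2(\br^{(\kko)})\,\ff\nn(\br_\kko)$, and trivial tail of a product says nothing about the left factor alone. The covering trick in the proof of Proposition~\ref{P:Splitting} works there only because one has $\br_\jj=1$ for \emph{three consecutive} indices~$\jj$, not just one. The paper proceeds differently: it first uses~(i) and one more application of~\eqref{E:SplittingConditionV2} (with $\br_\kk=1$) to pin down $\last{\br_\kkp}=\aa\nnt\nno$, then observes $\ff\nn^3(\aa\nnt\nno)=\aa12$ and pushes $\aa12$ rightward through $\ff\nn(\br_\kko)$ (whose letters all have first index~$\ge2$), obtaining a $\BKL\nno$-right-divisor of $\ff\nn^3(\br_\kkp)\,\ff\nn^2(\br_\kk)\,\ff\nn(\br_\kko)$ that contradicts~\eqref{E:SplittingConditionV2} at index~$\kkt$. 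This is exactly where $\kk\ge3$ is used.

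\textbf{Part (iii).} Your inductive descent through the $\ff\nno$-splitting can be made to work (the case analysis you allude to does go through), but it is far heavier than needed. The paper dispatches~(iii) in two lines: if $\last\ww=\aa\indi\indii$ with $\indii<\nno$, then the relation $\aa\indi\indii\,\aa\nnt\nno\equiv\aa\nnt\nno\,\aa\indi\indii$ (for $\indii<\nnt$) or $\aa\indi\nnt\,\aa\nnt\nno\equiv\aa\indi\nno\,\aa\indi\nnt$ (for $\indii=\nnt$) makes $\aa\indi\indii$ a right-divisor of~$\br_\kk$, whence $\ff\nn(\aa\indi\indii)\in\BKL\nno$ right-divides $\ff\nn(\br_\kk)$, contradicting~\eqref{E:SplittingConditionV2}.
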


\begin{proof}
$(i)$ Assume $\kk \ge 2$. Put  $\aa\indi\indii = \last{\br}_\kk$.
By~\eqref{E:SplittingConditionV2}, the~$\BKL\nno$-tail
of~$\ff\nn^{\brdi\minus\kkp}(\br_\brdi)\cdot...\cdot\ff\nn(\br_\kk)$ is
trivial. In particular, $\ff\nn(\last{\br}_\kk)$ cannot lie in~$\BKL\nno$, so 
$\last{\br}_\kk$ must be a letter of the form $\aa\indi\nno$.

$(ii)$ Assume that we have $\br_\brdii = 1$ with $\brdii \ge 3$ and
$\br_\kk \not= 1$ for $\brdi \ge \kk > \brdii$. By definition of a
$\ff\nn$-splitting, 
$\br_\brdi \not= 1$ holds, hence we must have $\brdii\le\brdio$. By
definition of~$\brdii$, we have $\br_\brdiip \not= 1$, hence, by~$(i)$,
$\last{\br}_\brdiip = \aa\indiii\nno$ for some~$\indiii$.
By~\eqref{E:SplittingConditionV2}, the~$\BKL\nno$-tail
of~$\ff\nn^{\brdi\minus\brdiio}(\br_\brdi)\cdot ...
\cdot\ff\nn^2(\br_\brdiip)\,\ff\nn(\br_\brdii)$ is~$1$. As we have
$\br_\brdii = 1$, we deduce that the $\BKL\nno$-tail
of~$\ff\nn^{\brdi\minus\brdiio}(\br_\brdi)\cdot ...
\cdot\ff\nn^2(\br_\brdiip)$ is~$1$ as well. This implies that the last letter of
$\ff\nn^2(\br_\brdiip)$, which is
$\ff\nn^2(\aa\indiii\nno)$, does not belong
to~$\BKL\nno$. Then \eqref{E:Rotation} implies
$\indiii=\nnt$ and
$\ff\nn^3(\aa\indiii\nno)=\aa12$. As the normal form of $\br_\brdiio$ is a word
of~$\WBKL\nno$, the braid $\ff\nn(\br_\brdiio)$ is represented by a
word that contains no letter $\aa1\indii$. Now the relations 
\begin{equation*}
\aa12\,\aa\indi\indii\equiv
\begin{cases}
\aa\indi\indii\,\aa12 &\text{for $2<\indi$,}\\
\aa1\indii\,\aa12 & \text{for $2=\indi$,}\\
\end{cases}
\end{equation*}
imply that there exists a braid~$\brr$ in $\BKL\nn$ satisfying $\aa12\,\ff\nn(\br_\brdiio)\equiv\brr\,\aa12$.
Therefore $\aa12$ is a right-divisor of~$\ff\nn^3(\br_\brdiip)\cdot\ff\nn^2(\br_\brdii)
\cdot\ff\nn(\br_\brdiio)$. As we have $\brdiio\ge2$ by hypothesis, this
contradicts~\eqref{E:SplittingConditionV2}.

$(iii)$ Assume that the normal form of $\br_\kk$ is $\ww\,\aa\nnt\nno$ with $\ww\not=\varepsilon$.
Let~$\aa\indi\indii$ be the last letter of~$\ww$.
As we have
\begin{equation}
\aa\indi\indii\,\aa\nnt\nno\equiv
\begin{cases}
\aa\nnt\nno\, \aa\indi\indii&\text{for $\indii<\nnt$,}\\
\aa\indi\nno\, \aa\indi\indii&\text{for $\indii=\nnt$,}
\end{cases}
\end{equation}
we must have $\indii=\nno$. Indeed, otherwise, $\aa\indi\indii$ would be  a
right-divisor of~$\br_\kk$,
\ie, the~$\BKL\nno$-tail of~$\ff\nn(\br_\kk)$ would be nontrivial, contradicting~\eqref{E:SplittingConditionV2}.
\end{proof}

%
%

\subsection{Barriers}

If $(\br_\brdi,\Ldots,\br_1)$ is the $\ff\nn$-splitting of a braid
of~$\BKL\nn$, then Lemma~\ref{L:LastLetter} says that,  for $\kk \ge 3$, the
letter $\last{\br}_\kk$ must be some letter~$\aa\indio\nno$. We shall see now
that the braid~$\br_\kko$ cannot be an arbitrary braid of~$\BKL\nno$: its
normal form has to satisfy some constraints involving the integer~$\indi$,
namely to contain a letter called an $\aa\indi\nn$-barrier---a key point in
subsequent results.

\begin{defi}
\label{D:Barrier}
The letter~$\aa\indiii\indiv$ is called an \emph{$\aa\indi\nn$-barrier} if we
have
\begin{equation}
1\le \indiii<\indi<\indiv\le \nno.
\end{equation}
\end{defi}
There exists no $\aa\indi\nn$-barrier with $\nn \le 3$; the only
$\aa\indi4$-barrier is~$\aa13$, which is an $\aa24$-barrier.

By definition, if the letter~$\xx$ is an $\aa\indi\nn$-barrier, then there exists in the 
presentation of~$\BKL\nn$ no relation of the form
$\aa\indi\nn \cdot \xx = \yy \cdot \aa\indi\nn$ allowing one to push the letter $\aa\indi\nn$ to
the right through the letter~$\xx$: so, in some sense, $\xx$ acts as a barrier. We shall prove now that (almost) every non-terminal entry~$\br_\kk$ of a splitting necessarily contains a barrier---a key point for the sequel. The reason is simple: if there were no barrier in~$\br_\kk$, then the relations would enable one to push the last letter of~$\ff\nn^2(\br_\kkp)$ through~$\ff\nn(\br_\kk)$ and incorporate it in~$\br_\kko$, contradicting the definition of a splitting. 

\begin{lemm}
\label{L:ExistBarrier}
Assume $\nn\ge3$, that $\br$ is a braid of~$\BKL\nno$ and that the~$\BKL\nno$-tail
of~$\ff\nn(\aa\indi\nn\br)$ is trivial for $\indi\le\nnt$. Then the normal
form of~$\br$ is not the empty word and it contains an $\aa\indi\nn$-barrier.
\end{lemm}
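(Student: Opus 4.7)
The plan is to prove the statement by contrapositive: suppose the $\BKL\nno$-tail of $\ff\nn(\aa\indi\nn\,\br)$ is trivial, and deduce that $\br$ is nontrivial and that its normal form contains an $\aa\indi\nn$-barrier. The case $\br=1$ is immediate, since Lemma~\ref{L:Rotation} gives $\ff\nn(\aa\indi\nn\,\br)=\aa{1}{\indip}$, and since $\indi\le\nnt$ forces $\indip\le\nno$, this letter lies in $\BKL\nno$; the tail then equals $\aa{1}{\indip}$, nontrivial, a contradiction.

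Now assume $\br\not=1$ and let $\xi$ denote the last letter of the normal form of $\br$, say $\xi=\aa\indiii\indiv$. Since $\xi$ right-divides $\br$ in $\BKL\nno$, the letter $\ff\nn(\xi)$ right-divides $\ff\nn(\aa\indi\nn\,\br)=\aa{1}{\indip}\,\ff\nn(\br)$. If $\indiv\le\nnt$, then $\ff\nn(\xi)=\aa{\indiiip}{\indivp}$ has both indices at most $\nno$, and so lies in $\BKL\nno$; this contributes a nontrivial element to the tail, contradicting the hypothesis. Thus $\indiv=\nno$, and $\xi=\aa\indiii\nno$; and if $\indiii<\indi$, the letter $\xi$ itself is the required $\aa\indi\nn$-barrier and we are done. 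It remains to eliminate the subcase $\indiii\ge\indi$.

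For that subcase, I would push the letter $\aa{1}{\indip}$ rightwards through $\ff\nn(\br)$ using the relations of Lemma~\ref{L:DualRelations}. A case analysis on a generic letter $\aa{\hh}{\kk}$ of $\ff\nn(\br)$, with $2\le\hh<\kk\le\nn$, yields three regimes: if $[\hh,\kk]$ is nested in or disjoint from $[1,\indip]$ the two letters commute; if $\hh=\indip$, the triangle relation $\aa{1}{\indip}\,\aa{\indip}{\kk}=\aa{1}{\kk}\,\aa{1}{\indip}$ slides $\aa{1}{\indip}$ one step rightwards while transforming its neighbour; and if $\kk=\indip$ with $2\le\hh\le\indi$, the blocking letter $\aa{\hh}{\indip}$ itself lies in $\BKL\nno$. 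The decisive observation is that the only letters which would truly obstruct the push and escape $\BKL\nno$ are $\aa{\hh}{\nn}$ with $2\le\hh\le\indi$; each such letter arises via $\ff\nn$ from a letter $\aa{\hho}{\nno}$ of $\br$ with $\hho<\indi<\nno$, \ie, from an $\aa\indi\nn$-barrier, and the no-barrier hypothesis excludes them. When the push reaches the final letter $\aa{\indiiip}{\nn}=\ff\nn(\xi)$, one has $\indiiip\ge\indip$, so $\aa{1}{\indip}$ either commutes with $\aa{\indiiip}{\nn}$ (disjoint intervals, when $\indiii>\indi$) or slides past it via the triangle $1<\indip<\nn$, giving $\aa{1}{\indip}\,\aa{\indip}{\nn}=\aa{1}{\nn}\,\aa{1}{\indip}$ (when $\indiii=\indi$). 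In either outcome, the rewritten word ends in $\aa{1}{\indip}\in\BKL\nno$, producing a nontrivial right-divisor and the desired contradiction.

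The main obstacle I anticipate is the bookkeeping when $\aa{1}{\indip}$ meets a blocker $\aa{\hh}{\indip}$: the product $\aa{1}{\indip}\,\aa{\hh}{\indip}$ admits no length-preserving rewriting, so the push stalls there. I would handle this by observing that the blocker itself lies in $\BKL\nno$ and that, together with the suffix of $\ff\nn(\br)$ that follows, it still supplies a nontrivial right-divisor, possibly via a short iteration of the same pushing argument on the shorter suffix starting at the blocker, or via an auxiliary induction on the length of $\br$.
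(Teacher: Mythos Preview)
Your overall strategy---find a letter whose $\ff\nn$-image lies in~$\BKL\nno$ and push it to the right end to produce a nontrivial right-divisor---is exactly the paper's strategy, and your handling of the trivial case and of the last letter of~$\br$ is correct. The gap is precisely the one you flag yourself: the ``blocker'' case where $\aa1{\indip}$ meets a letter $\aa\hh{\indip}$ with $2\le\hh\le\indi$. There is no positive relation of the form $\aa1{\indip}\,\aa\hh{\indip}\equiv y\,\aa1{\indip}$, so the push genuinely stalls. Your proposed repair---restart the push with the blocker, or induct on length---does not close the gap: pushing $\aa\hh{\indip}$ rightward can in turn cross a letter $\aa{\hh'}{\kk'}$ with $\hh'<\hh<\kk'<\indip$ (coming from a letter of~$\ww$ with second index $<\indi$, which is neither a barrier nor a blocker), and no relation is available there either. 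So the recursion can stall again, and the induction on length is not set up so that the inductive hypothesis applies to any well-defined shorter instance.

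The paper avoids this by a single change of viewpoint: instead of pushing the \emph{leftmost} candidate $\aa\indi\nn$, it pushes the \emph{rightmost} one. Concretely, write $\www=\aa\indi\nn\,\ww=\uu\,\vv$ where $\vv$ is the maximal suffix whose letters all have second index $>\indi$; the last letter $\aa\indiv\indv$ of~$\uu$ is then either $\aa\indi\nn$ itself or has $\indv\le\indi$, and in both cases $\ff\nn(\aa\indiv\indv)\in\BKL\nno$. The no-barrier assumption forces every letter $\aa\indii\indiii$ of~$\vv$ to satisfy $\indi\le\indii<\indiii\le\nno$, so the intervals $[\indiv,\indv]$ and $[\indii,\indiii]$ are always disjoint, nested, or share the single vertex~$\indi$: in every case $\aa\indiv\indv$ quasi-commutes past, reaching the right end in one sweep. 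Your approach becomes a complete proof as soon as you replace ``push $\aa1{\indip}$ from the left'' with ``push the rightmost letter of $\aa\indi\nn\,\ww$ with second index $\le\indi$''; this is the missing idea.
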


\begin{proof}
We assume that
the normal form~$\ww$ of~$\br$ contains no
$\aa\indi\nn$-barrier, and  derive a contradiction. Let $\www$ be the
word~$\aa\indi\nn\ww$ and let $\XX$ be the set of all
letters~$\aa\indii\indiii$ with
$\indi<\indiii\le\nno$.   Write $\www=\uu\,\vv$ where $\vv$ is the maximal
suffix of~$\ww$ containing letters from~$\XX$ only.
By hypothesis, the $\BKL\nno$-tail of $\wwwt$ is trivial. Hence the word
$\www$ ends with~$\aa\indii\nno$ for some $\indii$, \ie, $\vv$ is not empty. As
the first letter of~$\www$ is~$\aa\indi\nn$, which is not in $\XX$, the
word~$\uu$ is not empty. Let $\aa\indiv\indv$ be the last letter of~$\uu$. 
By construction of $\uu$, the letter~$\aa\indiv\indv$ is either $\aa\indi\nn$ or it satisfies $\indv\le\indi$.
In both cases, the braid~$\ff\nn(\aa\indiv\indv)$ lies in~$\BKL\nno$. We shall now prove
that $\aa\indiv\indv$ quasi-commutes with~$\vv$, \ie, there exists a word~$\vvv$ satisfying $\aa\indiv\indv\vv\equal\vv'\aa\indiv\indv$. 
Every letter~$\aa\indii\indiii$ occurring in~$\vv$ is not an $\aa\indi\nn$-barrier, \ie, it satisfies $\indi\le
\indii<\indiii\le\nno$. Hence, by the relations
 \[\aa\indiv\indv\aa\indii\indiii\equal
\begin{cases}
\aa\indii\indiii\aa\indiv\indv, & \text{for $\indi<\indii$ or $\indv<\indi$}\qquad\hfill\text{by \eqref{E:DualCommutativeRelation},}\\
\aa\indiv\indiii\aa\indiv\indv, & \text{for $\indii=\indv=\indi$}\qquad\hfill\text{by \eqref{E:DualNonCommutativeRelation},}\\
\aa\indiii\indv\aa\indiv\indv, & \text{for $\indii=\indiv=\indi$}\qquad\hfill\text{by \eqref{E:DualNonCommutativeRelation},}\\
\end{cases}\]
the letter~$\aa\indiv\indv$ quasi-commutes with~$\vv$. 
Then, $\ff\nn(\aa\indiv\indv)$ is a right-divisor of~$\ff\nn(\aa\indi\nn\,\br)$.
This contradicts the hypothesis that
the~$\BKL\nno$-tail of~$\ff\nn(\aa\indi\nn\br)$ is trivial since the braid $\ff\nn(\aa\indiv\indv)$ belongs to $\BKL\nno$.
\end{proof}

We now show how Lemma~\ref{L:ExistBarrier} can be used in the context of
a $\ff\nn$-splitting. 

\begin{lemm}
\label{L:ExistBarrierv2}
Let $(\br_\brdi,\Ldots,\br_1)$ be a $\ff\nn$-splitting of some braid of~$\BKL\nn$ with $\nn\ge3$.
Then, for each $\kk$ in~$\{\brdi-1, \Ldots, 2\}$ such that
$\last{\br}_\kkp$ is not~$\aa\nnt\nno$ (if any), the normal form of
$\br_\kk$ contains an $\ff\nn(\last{\br}_\kkp)$-barrier.
\end{lemm}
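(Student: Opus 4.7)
The plan is to deduce Lemma~\ref{L:ExistBarrierv2} from Lemma~\ref{L:ExistBarrier}. First, by Lemma~\ref{L:LastLetter}$(i)$--$(ii)$, since $\kkp\ge3$, the braid $\br_{\kkp}$ is nontrivial and $\last{\br}_{\kkp}=\aa\indi\nno$ for some~$\indi$; the hypothesis $\last{\br}_{\kkp}\ne\aa\nnt\nno$ translates into $\indi\le\nn-3$, whence $\indip\le\nnt$ and $\ff\nn(\last{\br}_{\kkp})=\aa{\indip}{\nn}$ by~\eqref{E:Rotation}. The goal thus becomes to show that the normal form of~$\br_\kk$ contains an $\aa{\indip}{\nn}$-barrier, and I intend to obtain this by applying Lemma~\ref{L:ExistBarrier} with $\br=\br_\kk\in\BKL\nno$ and distinguished index~$\indip\le\nnt$. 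The conclusion of that lemma then simultaneously gives that the normal form of~$\br_\kk$ is non-empty and that it contains the sought barrier.

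The only non-trivial condition to verify is that $\ff\nn(\aa{\indip}{\nn}\,\br_\kk)$ has trivial $\BKL\nno$-tail. For this I will invoke condition~\eqref{E:SplittingConditionV2} at index~$\kko$, which is available because $\kk\ge2$, and which says that the braid
\[
T\,:=\,\ff\nn^{\brdi-\kko}(\br_\brdi)\cdot\Ldots\cdot\ff\nn^2(\br_{\kkp})\cdot\ff\nn(\br_\kk)
\]
has trivial $\BKL\nno$-tail. Since the normal form of $\br_{\kkp}$ ends with $\aa\indi\nno$, applying $\ff\nn^2$ letterwise yields a representative of $\ff\nn^2(\br_{\kkp})$ ending with $\ff\nn^2(\aa\indi\nno)=\aa1{\indipp}$; hence $\aa1{\indipp}$ right-divides the braid~$\ff\nn^2(\br_{\kkp})$. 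Consequently $\ff\nn(\aa{\indip}{\nn}\,\br_\kk)=\aa1{\indipp}\cdot\ff\nn(\br_\kk)$ is a right-divisor of $\ff\nn^2(\br_{\kkp})\cdot\ff\nn(\br_\kk)$, and hence of~$T$. Every $\BKL\nno$-right-divisor of $\ff\nn(\aa{\indip}{\nn}\,\br_\kk)$ therefore right-divides~$T$, and so is trivial.

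The hard part will be spotting this reduction: the hypothesis of Lemma~\ref{L:ExistBarrier} concerns $\ff\nn(\aa{\indip}{\nn}\cdot\br)$ with the distinguished letter on the \emph{left}, whereas condition~\eqref{E:SplittingConditionV2} naturally speaks of right-divisors. The bridge is the observation that, after a single application of~$\ff\nn$, the letter $\last{\br}_{\kkp}=\aa\indi\nno$ becomes $\aa1{\indipp}$, which lies in~$\BKL\nno$ and is exactly the left factor required; it surfaces as a right-divisor inside the tail-trivial braid~$T$ supplied by~\eqref{E:SplittingConditionV2} at index~$\kko$. Once this identification is made, the remainder of the argument is routine Garside bookkeeping.
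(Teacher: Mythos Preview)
Your proof is correct and follows the same route as the paper's: invoke condition~\eqref{E:SplittingConditionV2} at index~$\kko$ to see that the $\BKL\nno$-tail of $\ff\nn^2(\last{\br}_{\kkp})\cdot\ff\nn(\br_\kk)$ is trivial, then apply Lemma~\ref{L:ExistBarrier}. Your version is in fact more explicit than the paper's terse one---you spell out the appeal to Lemma~\ref{L:LastLetter} to get $\last{\br}_{\kkp}=\aa\indi\nno$, the translation of the hypothesis into $\indip\le\nnt$, and the right-divisibility argument that passes triviality of the tail down from~$T$ to $\ff\nn(\aa{\indip}\nn\,\br_\kk)$.
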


\begin{proof}
Condition~\eqref{E:SplittingConditionV2} implies that the~$\BKL\nno$-tail of~$\ff\nn^{\brdi-\kk+1}(\br_\brdi)\cdot ... \cdot\ff\nn^2(\br_\kkp)\,\ff\nn(\br_\kk)$ is trivial.
In particular the~$\BKL\nno$-tail of~$\ff\nn^2(\last{\br}_\kkp)\,\ff\nn(\br_\kk)$ is trivial. 
Then, Lemma~\ref{L:ExistBarrier} implies that the normal form of~$\br_\kk$
contains an $\aa\indi\nn$-barrier.
\end{proof}

\begin{exam}
\label{X:ExistBarrier}
Let us consider the braid $\br$ whose normal form is
\[
\aa24\,\aa13\,\aa45\,\aa24\,\aa24\,\aa35\,\aa45.
\]
The $\ff5$-splitting of $\br$ is $(\br_4, \br_3, \br_2, \br_1)$ with
\[
\br_4=\aa14, \quad\br_3=\aa14, \quad\br_2= \aa34\aa13\aa13\aa24\aa34\quad\text{and}\quad \br_1=1. 
\]
 The letter $\last{\br}_4$ is $\aa14$, hence by Lemma~\ref{L:ExistBarrierv2} the normal form of
$\br_3$ must contain an $\aa25$-barrier: this is true, since $\aa14$
is an $\aa25$-barrier. The letter $\last{\br}_3$ is $\aa14$.
Then, again by Lemma~\ref{L:ExistBarrierv2}, the normal form of $\br_2$ has to contain an
$\aa25$-barrier: this is true, since the normal form of
$\br_2$ is $\aa34\aa13\aa13\aa24\aa34$, which contains the  $\aa25$-barrier~$\aa13$.
\end{exam}

%
%

\subsection{Ladders}

We have seen above in Lemma~\ref{L:ExistBarrierv2} that every normal
word~$\ww$ of~$\WBKL\nno$ such that the~$\BKL\nno$-tail
of~$\ff\nn(\aa\indi\nn\,\wwt)$ is trivial contains at least one
$\aa\indi\nn$-barrier.  We shall see now that, under the same hypotheses,
$\ww$ contains not only one barrier, but even a sequence of overlapping
barriers. Words containing such sequences are what we shall call ladders.

\begin{defi}
\label{D:Ladder}
For $\nn\ge 3$, we say that a normal word~$\ww$ is an
\emph{$\aa\indi\nn$-ladder of height~$\hh$ lent on~$\aa\indiio\nno$}, if
there exists a decomposition
\begin{equation}
\label{E:Ladder}
\ww = \ww_0\,\xx_1\,\ww_1 \,...\, \ww_\hho\,\xx_\hh\,\ww_\hh,
\end{equation}
and a sequence $\indi = \findi0 < \findi1 < ... < \findi\hh = \nno$ such that

$(i)$ for each~$\kk\le\hh$, the letter~$\xx_\kk$ is an $\aa{\findi\kko}\nn$-barrier of the form $\aa{..}{\findi\kk}$,

$(ii)$ for each~$\kk<\hh$, the word~$\ww_\kk$ contains no
$\aa{\findi\kk}\nn$-barrier,

$(iii)$ the last letter of $\ww$ is~$\aa\indiio\nno$.
\end{defi}

By convention, any $a$-word whose last letter is $\aa\indiio\nno$ is an
$\aa\nno\nn$-ladder lent on~$\aa\indiio\nno$ and its height is $0$.
There exist no $\aa\indi\nn$-barrier with $\nn\ge3$, hence there exist only
$\aa12$-ladders in $\WBKL3$.

The concept of a ladder is easily illustrated by representing the
generators~$\aa\indi\indii$ as a vertical line from the~$\indi$th line to
the~$\indii$th line on an $\nn$-line stave.  Then, for every $\kk\ge0$, the
letter~$\xx_\kk$ looks like a bar of a ladder---see Figure~\ref{F:Ladder}.

\begin{figure}[htb]
\begin{picture}(110,23)
\put(2,0){\includegraphics{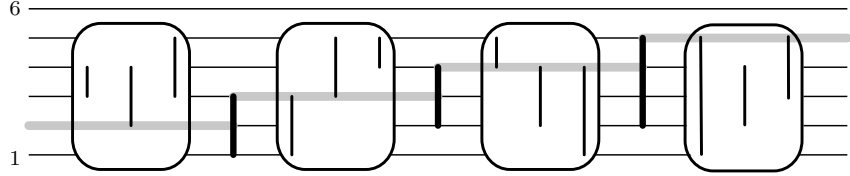}}
\put(0,1){\footnotesize $1$}
\put(0,21){\footnotesize $6$}
\end{picture}
\caption{{\sf \smaller An $\aa25$-ladder lent on~$\aa35$ (the last letter).
The gray line starts at position $2$ and goes up to position $5$ using the bars of the ladder.
The empty spaces between bars in the ladder are represented by a framed box.
In such boxes the vertical line representing the letter~$\aa\ii\jj$ does not cross the gray line.
The bars of the ladder are represented by black thick vertical lines.}}
\label{F:Ladder}
\end{figure}

Our aim is to prove that the normal form of each non-terminal entry in a
$\ff\nn$-splitting is a ladder. In order to do that, we begin with a preparatory lemma showing that barriers
necessarily occur after certain letters of a normal form. Applying this result
repeatedly will eventually provide us with a ladder.

\begin{lemm}
\label{L:For:L:Ladder}
Assume $\nn\ge4$, that $\ww$ is a suffix of a normal word of $\WBKL\nno$,
that $\aa\indi\indii$ belongs to~$\BKL\nnt$, and that  the~$\BKL\nno$-tail
of~$\ff\nn(\aa\indi\indii\wwt)$ is trivial. Then $\ww$ contains an
$\aa\indii\nn$-barrier.
\end{lemm}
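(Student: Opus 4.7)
I would argue by contradiction, closely mirroring the proof of Lemma~\ref{L:ExistBarrier} but adapting the combinatorial setup to the fact that the sought barrier is indexed by $\indii$ rather than by $\indi$. Suppose $\ww$ contains no $\aa\indii\nn$-barrier, and set $\www = \aa\indi\indii\,\ww$. Define $\XX$ to be the set of letters $\aa{\indiii'}{\indiv'}$ with $\indii < \indiv' \le \nno$, and write $\www = \uu\,\vv$ where $\vv$ is the longest suffix of $\www$ made only of letters in~$\XX$. The initial letter $\aa\indi\indii$ has second index equal to $\indii$, hence lies outside $\XX$, so $\uu$ is nonempty. Let $\aa\indiv\indv$ be the last letter of $\uu$: either $\aa\indiv\indv$ is the initial $\aa\indi\indii$, or it is a letter of $\ww$ satisfying $\indv \le \indii$.

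Next, I would verify two properties of this letter. First, $\ff\nn(\aa\indiv\indv)$ lies in $\BKL\nno$: in the first case this is $\aa{\indip}{\indiip}$, which belongs to $\BKL\nno$ since $\indii \le \nnt$ implies $\indiip \le \nno$; in the second case $\indv \le \indii \le \nnt$ gives $\indv+1 \le \nno$, so again the image lies in $\BKL\nno$. Second, $\aa\indiv\indv$ quasi-commutes with $\vv$. Indeed, membership in $\XX$ together with the assumed absence of an $\aa\indii\nn$-barrier forces every letter $\aa{\indiii'}{\indiv'}$ of $\vv$ to satisfy $\indii \le \indiii' < \indiv' \le \nno$. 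Comparing the intervals $[\indiv,\indv]$ and $[\indiii',\indiv']$, the upper bound $\indv \le \indii \le \indiii'$ rules out any crossing configuration: either the intervals are disjoint (when $\indv < \indiii'$), in which case \eqref{E:DualCommutativeRelation} applies, or they share the single endpoint $\indv = \indiii' = \indii$, forcing $\indiv < \indv < \indiv'$, in which case \eqref{E:DualNonCommutativeRelation} applies. In both subcases $\aa\indiv\indv$ crosses the current letter of $\vv$.

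Iterating this quasi-commutation across $\vv$ produces a word~$\vv'$ with $\aa\indiv\indv\,\vv \equiv \vv'\,\aa\indiv\indv$, which exhibits $\aa\indiv\indv$ as a right-divisor of the braid $\wwwt = \aa\indi\indii\,\wwt$. Applying the automorphism~$\ff\nn$, the element $\ff\nn(\aa\indiv\indv)$ is then a nontrivial right-divisor of $\ff\nn(\aa\indi\indii\,\wwt)$ lying in $\BKL\nno$, contradicting the triviality of its $\BKL\nno$-tail.

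The only delicate step is the case analysis in the quasi-commutation check: one must carefully track the boundary case in which an endpoint lands exactly on $\indii$, so that \eqref{E:DualNonCommutativeRelation} rather than \eqref{E:DualCommutativeRelation} must be invoked, and verify that the resulting letters still lie within $\WBKL\nno$ so that the pushed-out letter $\aa\indiv\indv$ indeed produces a right-divisor of $\wwwt$ rather than merely a word-level equivalence in some larger ambient monoid.
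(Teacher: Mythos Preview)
Your proof is correct and follows essentially the same route as the paper's: define $\XX$ as the set of letters $\aa{\indiii'}{\indiv'}$ with $\indiv' > \indii$, take the maximal suffix~$\vv$ lying in~$\XX$, identify the last letter $\aa\indiv\indv$ outside~$\XX$ (so $\indv\le\indii$), use the absence of an $\aa\indii\nn$-barrier to show this letter quasi-commutes through~$\vv$, and obtain a contradiction from $\ff\nn(\aa\indiv\indv)\in\BKL\nno$. The only difference is cosmetic: you decompose $\www=\aa\indi\indii\,\ww$ rather than~$\ww$ itself, which makes the nonemptiness of~$\uu$ immediate and cleanly covers the boundary case where~$\aa\indiv\indv$ turns out to be the initial letter~$\aa\indi\indii$.
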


\begin{proof}
Let $\XX$ be the set of all letters~$\aa\indiii\indiv$ with $\indiv>\indii$. 
Write $\ww=\uu\,\vv$ where $\vv$ is the maximal suffix containing letters of~$\XX$ only.
As, by hypothesis, the~$\BKL\nno$-tail of~$\ff\nn(\aa\indi\indii\wwt)$ is trivial, the last letter of~$\ww$ exists and
has the form $\aa{..}\nno$, hence $\vv$ is nonempty.

As the letter~$\aa\pp\qq$ does not
lie in~$\XX$, the word~$\uu$ is not empty. Let $\xx=\aa\indv\indvi$ be the
last letter of~$\uu$. 
By definition of $\uu$, we have $\indvi\le\indii$.
We suppose that~$\vv$ contains no
$\aa\indii\nn$-barrier,
\ie, every letter~$\aa\indiii\indiv$ of~$\vv$ satisfies~$\indiii\ge\indii$, and
eventually derive a contradiction. 
By~\eqref{E:DualCommutativeRelation}
and~\eqref{E:DualNonCommutativeRelation}, we have
 \[\xx\aa\indiii\indiv\equiv
\begin{cases}
\aa\indiii\indiv\xx & \text{for $\indiii>\indii$ or $\indvi<\indii$,}\\
\aa\indv\indiv\xx & \text{for $\indii=\indiii=\indvi$,}
\end{cases}\]
which implies that $\xx$ and $\vv$ quasi-commute, \ie, there exists an
$a$-word~$\vvv$ satisfying $\xx\,\vv\equiv\vvv\,\xx$. Then
$\ff\nn(\xx)$ is a right-divisor of the braid represented
by~$\ff\nn(\aa\indi\indii\ww)$.
The hypothesis about~$\aa\indi\indii$ and the relation $\indvi\le\indii$
imply that $\ff\nn(\xx)$ lies in~$\BKL\nno$, which contradicts the
hypothesis that $\ff\nn(\aa\indi\indii\,\wwt)$ is trivial.
\end{proof}

We can now show that every normal word satisfying some mild
additional condition is a ladder.

\begin{prop}
\label{P:NormalFormIsLadder}
Assume $\nn\ge3$, that $\br$ belongs to~$\BKL\nno$ and that the~$\BKL\nno$-tail
of~$\ff\nn(\aa\indi\nn\,\br)$ is trivial for some $\indi \le \nnt$.
Then the normal form of~$\br$
is an $\aa\indi\nn$-ladder lent on~$\last{\br}$.
\end{prop}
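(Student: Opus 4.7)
The plan is to peel off the barriers of the ladder one by one, starting from the leftmost $\aa\indi\nn$-barrier in the normal form of~$\br$ and iterating by means of Lemma~\ref{L:For:L:Ladder}. Before launching the induction, I record two easy preliminary observations. First, $\br \ne 1$: otherwise $\ff\nn(\aa\indi\nn\,\br) = \aa1{\indip}$ would itself lie in $\BKL\nno$ (using $\indi \le \nnt$) and hence be a nontrivial $\BKL\nno$-right-divisor of itself, contradicting the hypothesis. Second, writing $\ww$ for the normal form of $\br$, the last letter $\last\br$ must have upper index $\nno$: otherwise $\ff\nn(\last\br)$ would be a nontrivial right-divisor of $\ff\nn(\aa\indi\nn\,\br)$ lying in $\BKL\nno$. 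This already takes care of condition $(iii)$ of Definition~\ref{D:Ladder}.

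I then construct $\xx_1, \xx_2, \ldots$ inductively. Set $\findi0 = \indi$ and $\ww^{(0)} = \ww$. For $\kk \ge 1$, as long as $\findi\kko < \nno$, I let $\xx_\kk$ be the leftmost $\aa{\findi\kko}\nn$-barrier occurring in $\ww^{(\kko)}$, write $\ww^{(\kko)} = \ww_\kko\,\xx_\kk\,\ww^{(\kk)}$, and define $\findi\kk$ as the upper index of $\xx_\kk$; the definition of a barrier ensures $\findi\kko < \findi\kk \le \nno$. Existence of $\xx_1$ is precisely Lemma~\ref{L:ExistBarrier}. For $\kk \ge 2$, existence comes from Lemma~\ref{L:For:L:Ladder} applied to the suffix $\ww^{(\kko)}$ of the normal word~$\ww$ with generator $\xx_\kko$: its upper index $\findi\kko$ is at most $\nnt$, so $\xx_\kko \in \BKL\nnt$; and the $\BKL\nno$-tail of $\ff\nn(\xx_\kko \cdot \overline{\ww^{(\kko)}})$ is trivial because $\xx_\kko \cdot \overline{\ww^{(\kko)}}$ appears as a right factor in the factorization of $\aa\indi\nn\,\br$ obtained from the already extracted pieces, and right-divisors inherit triviality of the $\BKL\nno$-tail.

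Since $\findi0 < \findi1 < \cdots$ is strictly increasing and bounded by $\nno$, the process terminates at some height $\hh \ge 1$ with $\findi\hh = \nno$; setting $\ww_\hh = \ww^{(\hh)}$ yields the decomposition $\ww = \ww_0\,\xx_1\,\ww_1\cdots\xx_\hh\,\ww_\hh$. Conditions $(i)$ and $(ii)$ of Definition~\ref{D:Ladder} are then immediate: $(i)$ because each $\xx_\kk$ was chosen to be an $\aa{\findi\kko}\nn$-barrier of upper index $\findi\kk$; $(ii)$ because $\xx_\kkp$ was chosen as the \emph{leftmost} $\aa{\findi\kk}\nn$-barrier in $\ww^{(\kk)}$, so no such barrier can occur earlier within $\ww_\kk$. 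Together with the preliminary observation on $\last\br$, this shows that $\ww$ is an $\aa\indi\nn$-ladder of height $\hh$ lent on $\last\br$.

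The only genuinely nontrivial point, which I expect to be the main technical obstacle, is verifying at each iteration that the tail-triviality hypothesis of Lemma~\ref{L:For:L:Ladder} persists. That verification reduces, however, to the elementary fact that a right-divisor of a braid with trivial $\BKL\nno$-tail automatically has trivial $\BKL\nno$-tail, applied to the factorization of $\aa\indi\nn\,\br$ maintained throughout the induction.
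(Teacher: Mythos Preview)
Your argument is correct and follows essentially the same route as the paper: extract the first barrier using Lemma~\ref{L:ExistBarrier}, then iterate with Lemma~\ref{L:For:L:Ladder}, the tail-triviality propagating because a right-divisor of a braid with trivial $\BKL\nno$-tail again has trivial $\BKL\nno$-tail. Your version is in fact slightly tidier: you make the preliminary checks on $\br\ne1$ and on the shape of $\last\br$ explicit, and you invoke Lemma~\ref{L:ExistBarrier} directly (the paper's text cites Lemma~\ref{L:ExistBarrierv2} here, which is the splitting-specific corollary rather than the statement actually needed).
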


\begin{proof}
We put $\findi1=\indi$ and let $\ww$ be the normal form of~$\br$.
Lemma~\ref{L:ExistBarrierv2} implies that $\ww$ admits a decomposition
$\ww_0\xx_1\ww^{(0)}$, where $\ww_0$ is the maximal prefix of~$\ww$ that
contains no $\aa\indi\nn$-barrier and $\xx_1=\aa{..}{\findi1}$ is
an
$\aa\indi\nn$-barrier.  By hypothesis, the~$\BKL\nno$-tail of the braid
$\ff\nn(\aa\indi\nn\,\wwt)$ is trivial, \ie, the~$\BKL\nno$-tail of~$\ff\nn(\xx_1\,\overline{\ww}^{(0)})$ is trivial.   Assume
$\findi1\not=\nno$. Lemma~\ref{L:For:L:Ladder} implies that the word~$\ww^{(0)}$
admits a decomposition~$\ww_1\,\xx_2\,\ww^{(1)}$, where $\ww_1$ is the
maximal prefix of~$\ww^{(0)}$ that contains no $\aa{\findi1}\nn$-barrier and
$\xx_2$ is an $\aa{\findi1}\nn$-barrier. The same argument repeats until we
find a decomposition~$\ww_0\,\xx_1\,\ww_1\Ldots\xx_\hh\ww^{(\hh)}$ with
$\findi\hh=\nno$. Then, putting $\ww_\hh=\ww^{(\hh)}$, we have
obtained for~$\br$ a word representative that satisfies all requirements
of~Definition~\ref{D:Ladder}.
\end{proof}

Applying Proposition~\ref{P:NormalFormIsLadder} to the successive entries of a
$\ff\nn$-splitting allows one to deduce that its entries contain ladders.

\begin{coro}
\label{C:SplittingAndLadders}
Assume $\nn\ge3$ and that $(\br_\brdi,\Ldots, \br_1)$ is a sequence
in~$\BKL\nno$ that is the $\ff\nn$-splitting of some braid of~$\BKL\nn$.
Then, for each~$\kk$ in~$\{\brdio, \Ldots, 2\}$, the normal form
of~$\br_\kk$ is a  $\ff\nn(\last{\br}_\kkp)$-ladder lent on~$\last{\br}_\kk$.
\end{coro}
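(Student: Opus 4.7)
The plan is to apply Proposition~\ref{P:NormalFormIsLadder} entry by entry to each $\br_\kk$ with $2 \le \kk \le \brdio$. Fix such a $\kk$. Since $\kkp \ge 3$, Lemma~\ref{L:LastLetter}(ii) gives $\br_\kkp \ne 1$, and then Lemma~\ref{L:LastLetter}(i) provides an index $\pp$ with $1 \le \pp \le \nnt$ such that $\last{\br}_\kkp = \aa\pp\nno$; formula~\eqref{E:Rotation} yields $\ff\nn(\last{\br}_\kkp) = \aa{\pp\plus1}\nn$. The argument now splits according to whether $\pp\plus1 \le \nnt$ or $\pp\plus1 = \nno$.

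In the main case $\pp\plus1 \le \nnt$, I apply Proposition~\ref{P:NormalFormIsLadder} to $\br_\kk$ with the distinguished letter $\aa{\pp\plus1}\nn$, and for this I must check that the $\BKL\nno$-tail of $\ff\nn(\aa{\pp\plus1}\nn \cdot \br_\kk)$ is trivial. This is precisely the observation already made in the proof of Lemma~\ref{L:ExistBarrierv2}: applying~\eqref{E:SplittingConditionV2} with parameter $\kko$ gives the triviality of the $\BKL\nno$-tail of the product $\ff\nn^{\brdi\minus\kk\plus1}(\br_\brdi)\cdots\ff\nn^2(\br_\kkp)\,\ff\nn(\br_\kk)$, and since the $\BKL\nno$-tail of any right-factor $\gamma$ of a braid $\beta$ right-divides the $\BKL\nno$-tail of $\beta$, the $\BKL\nno$-tail of the two-factor product $\ff\nn^2(\last{\br}_\kkp)\,\ff\nn(\br_\kk)$ is trivial as well. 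Using that $\ff\nn^2(\last{\br}_\kkp) = \ff\nn(\aa{\pp\plus1}\nn)$ and that $\ff\nn$ is a homomorphism, this product coincides with $\ff\nn(\aa{\pp\plus1}\nn \cdot \br_\kk)$, so the hypothesis of Proposition~\ref{P:NormalFormIsLadder} is satisfied and the Proposition directly delivers the desired conclusion, namely that the normal form of $\br_\kk$ is an $\aa{\pp\plus1}\nn$-ladder lent on $\last{\br}_\kk$.

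In the boundary case $\pp\plus1 = \nno$, one has $\ff\nn(\last{\br}_\kkp) = \aa\nno\nn$ and Proposition~\ref{P:NormalFormIsLadder} does not apply because its hypothesis $\indi \le \nnt$ fails. Here I rely on the convention stated immediately after Definition~\ref{D:Ladder}: any $a$-word whose last letter has the form $\aa{..}\nno$ is automatically an $\aa\nno\nn$-ladder of height~$0$ lent on that last letter. Lemma~\ref{L:LastLetter}(i) ensures that $\last{\br}_\kk$ (which exists since $\br_\kk \ne 1$ for $\kk \ge 3$; the case $\kk = 2$, $\br_2 = 1$ is vacuous) is of the required form, so the claim follows without further work.

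The whole argument is essentially a bookkeeping assembly: the genuine content sits in Proposition~\ref{P:NormalFormIsLadder}, and the only nontrivial step in transferring that content to the current setting is the tail-triviality transfer extracted from~\eqref{E:SplittingConditionV2}, which is already carried out in the proof of Lemma~\ref{L:ExistBarrierv2}. The only mild subtlety is recognizing that the degenerate case $\pp = \nnt$ must be absorbed into the height-$0$ convention rather than handled by Proposition~\ref{P:NormalFormIsLadder} itself.
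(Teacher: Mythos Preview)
Your proof is correct and follows essentially the same approach as the paper's: use \eqref{E:SplittingConditionV2} to pass to the two-factor tail-triviality $\ff\nn^2(\last{\br}_\kkp)\,\ff\nn(\br_\kk)$ and then invoke Proposition~\ref{P:NormalFormIsLadder}. You are in fact slightly more careful than the paper, which does not explicitly separate out the boundary case $\pp = \nnt$ (where the hypothesis $\indi\le\nnt$ of Proposition~\ref{P:NormalFormIsLadder} fails and the height-$0$ convention after Definition~\ref{D:Ladder} is what is actually being used).
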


\begin{proof}
Condition~\eqref{E:SplittingConditionV2} implies that the~$\BKL\nno$-tail
of~$\ff\nn^2(\br_\kkp)\ff\nn(\br_\kk)$ is trivial. In particular,
the~$\BKL\nno$-tail of~$\ff\nn^2(\last{\br}_\kkp)\ff\nn(\br_\kk)$ is trivial. 
By Lemma~\ref{L:LastLetter}, the letter $\last{\br}_\kkp$ has the form
$\aa{..}\nno$. Then Proposition~\ref{P:NormalFormIsLadder} implies that the
normal form of~$\br_\kk$ is a $\ff\nn(\last{\br}_\kkp)$-ladder lent
on~$\last{\br}_\kk$.
\end{proof}

By definition of a ladder, as the letter $\aa\nnt\nno$ is not a barrier, if  a
word $\ww\,\aa\nnt\nno$ is an $\aa\indi\nn$-ladder and $\ww$ is
nonempty, then
$\ww$ is an $\aa\indi\nn$-ladder lent on
$\aa\indiiio\nno$ for some $\indiii$---see Lemma~\ref{L:LastLetter}$(iii)$.

Another consequence of Proposition~\ref{P:NormalFormIsLadder} is:

\begin{coro}
\label{C:SplittingAndLadders2}
Assume $\nn\ge3$ and that $(\br_\brdi,\Ldots, \br_1)$ is a sequence
in~$\BKL\nno$ that is the $\ff\nn$-splitting of some braid of~$\BKL\nn$.
Then, for each~$\brdii$ in~$\{\brdio, \Ldots, 2\}$ such that $\br_\brdii$ is
either~$1$ or~$\aa\nno\nn$, we have $\last{\br}_\brdiip = \aa\nnt\nno$.
\end{coro}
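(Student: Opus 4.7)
The plan is to split according to the two possibilities for~$\br_\brdii$ and, in each case, pin down $\last{\br}_\brdiip$ using Corollary~\ref{C:SplittingAndLadders} or, when that result does not directly apply, Condition~\eqref{E:SplittingConditionV2} together with the rotation formulas~\eqref{E:Rotation}.

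First I would handle the non-trivial case, in which the normal form of $\br_\brdii$ is the single letter $\aa\nnt\nno$ (i.e.\ the unique one-letter $\BKL\nno$-word whose image under $\ff\nn$ is $\aa\nno\nn$). Corollary~\ref{C:SplittingAndLadders} then asserts that this length-one word is a $\ff\nn(\last{\br}_\brdiip)$-ladder lent on $\aa\nnt\nno$, and my task reduces to showing that its height must be~$0$. A height of at least~$1$ would require the unique letter $\aa\nnt\nno$ to play the role of a bar~$\xx_\kk$, and hence to be an $\aa\indi\nn$-barrier for some $\indi$ with $\nnt<\indi<\nno$; but no integer fits strictly between the consecutive integers $\nnt$ and~$\nno$, so this is impossible. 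Hence the ladder has height~$0$, and by the convention following Definition~\ref{D:Ladder} it is an $\aa\nno\nn$-ladder. This forces $\ff\nn(\last{\br}_\brdiip) = \aa\nno\nn$, and inverting via~\eqref{E:Rotation} yields $\last{\br}_\brdiip = \aa\nnt\nno$.

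The remaining case is $\br_\brdii = 1$, in which Lemma~\ref{L:LastLetter}(ii) forces $\brdii = 2$. Since the normal form of~$\br_2$ is empty, Corollary~\ref{C:SplittingAndLadders} does not apply directly, so I would instead invoke Condition~\eqref{E:SplittingConditionV2} at index $\kk=2$: with $\ff\nn(\br_2)=1$, the condition reduces to the statement that the $\BKL\nno$-tail of $\ff\nn^{\brdi-2}(\br_\brdi)\cdots\ff\nn^2(\br_3)$ is trivial. Writing $\last{\br}_3 = \aa\indi\nno$ as permitted by Lemma~\ref{L:LastLetter}(i), the rotation formulas~\eqref{E:Rotation} give $\ff\nn^2(\aa\indi\nno) = \aa1\indipp$, which is the rightmost letter of the above product. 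Triviality of the $\BKL\nno$-tail therefore prohibits $\aa1\indipp$ from lying in $\BKL\nno$, which forces $\indipp>\nno$, hence $\indi=\nnt$, and so $\last{\br}_3 = \aa\nnt\nno$.

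The only real subtlety, minor but essential, is the height-$\ge1$ exclusion in the first case. It rests on the observation that no letter $\aa\indi\indii$ with $\indii-\indi=1$ can be a barrier, since Definition~\ref{D:Barrier} demands strict inequalities $\indiii<\indi<\indiv$ that leave no integer available strictly between $\indiii$ and $\indiv$.
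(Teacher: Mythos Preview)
Your argument is essentially correct, but there is one index slip in Case~2 that you should fix. You invoke Condition~\eqref{E:SplittingConditionV2} ``at index $\kk=2$'' and write the product $\ff\nn^{\brdi-2}(\br_\brdi)\cdots\ff\nn^2(\br_3)$. At $\kk=2$ the product is actually $\ff\nn^{\brdi-2}(\br_\brdi)\cdots\ff\nn(\br_3)$, whose last letter $\ff\nn(\aa\indi\nno)=\aa{\indip}\nn$ is never in~$\BKL\nno$, so that instance tells you nothing. What you want is $\kk=1$: there the product is $\ff\nn^{\brdi-1}(\br_\brdi)\cdots\ff\nn(\br_2)$, which for $\br_2=1$ becomes $\ff\nn^{\brdi-1}(\br_\brdi)\cdots\ff\nn^2(\br_3)$, and now your computation with the last letter $\ff\nn^2(\aa\indi\nno)=\aa1{\indipp}$ goes through exactly as you wrote. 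One further small point in Case~1: the implication ``height~$0$ $\Rightarrow$ $\aa\nno\nn$-ladder'' follows from the strictly increasing sequence $f(0)<\cdots<f(h)=\nno$ in Definition~\ref{D:Ladder} (forcing $f(0)=\nno$ when $h=0$), not from the convention after it, which runs the other way.

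By contrast, the paper handles both possibilities for~$\br_\brdii$ in one stroke: it observes that in either case the normal form of~$\br_\brdii$ contains no barrier, and then applies Proposition~\ref{P:NormalFormIsLadder} contrapositively to conclude $\indi=\nno$ directly. Your Case~1 is the same idea routed through Corollary~\ref{C:SplittingAndLadders}, while your Case~2 trades the appeal to Proposition~\ref{P:NormalFormIsLadder} for a bare-hands rotation computation. The paper's route is shorter and uniform; yours is a bit more explicit about why the empty word is excluded, and avoids invoking the ladder machinery when $\br_\brdii=1$.
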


\begin{proof}
Assume $\br_\brdii \in \{1, \aa\nnt\nno\}$. Let $\aa\indio\nno$
be the last letter of $\br_\brdiip$. Condition~\eqref{E:SplittingConditionV2}
implies that the~$\BKL\nno$-tail of~$\ff\nn^2(\br_\brdiip)\ff\nn(\br_\brdii)$ is
trivial. In particular the~$\BKL\nno$-tail of~$\ff\nn(\aa\indi\nn\,\br_\brdii)$ is
trivial.  Then, as the normal form of $\br_\brdii$ contains no barrier,
Proposition~\ref{P:NormalFormIsLadder} implies $\indi=\nno$.
Therefore we have $\last{\br}_\brdiip=\aa\nnt\nno$.
\end{proof}

\begin{exam}
\label{X:ExistLadder}
 Let us consider the braid of Example~\ref{X:ExistBarrier} again.
Its $\ff4$-splitting is is $(\br_4, ..., \br_1)$ with $\br_4=\aa14$,
$\br_3=\aa14$, $\br_2= \aa34\aa13\aa13\aa24\aa34$ and $\br_1=1$. The
normal form of $\br_4$ ends with $\aa14$, hence the normal form of $\br_3$
must be an $\aa25$-ladder lent on $\aa14$. This is true: here the ladder
is $\varepsilon\cdot\aa14\cdot\varepsilon$, and it has height~$1$, corresponding, with the notation of Definition~\ref{D:Ladder}, to $\ww_0=\varepsilon$, $\xx_1=\aa14$ and $\ww_1=\varepsilon$. Similarly, the normal form of
$\br_3$ ends with $\aa14$, hence by Corollary~\ref{C:SplittingAndLadders},
the normal form of $\br_2$ must be an $\aa25$-ladder lent on
$\aa34$. This is true again. Here the ladder has height~$2$, and its
decomposition is
$\aa34\cdot\aa13\cdot\aa13\cdot\aa24\cdot\aa34$, corresponding, with the
notation of Definition~\ref{D:Ladder}, to $\ww_0=\aa34$,
$\xx_1=\aa13$,
$\ww_1=\aa13$,
$\xx_2=\aa24$ and $\ww_2=\aa34$. We observe that $\aa13$ is an
$\aa25$-barrier and that
$\aa24$ is an $\aa35$-barrier.
\end{exam}

%
%

\section{Reversing}
\label{S:Reversing}

In Section~\ref{S:Ladders}, we have established that almost every normal word
is a ladder. We wish to use this result to establish Theorem~1,
\ie, to obtain (short) $\sigg$-definite representatives.  The basic question is
as follows. Starting with a braid word that contains letters~$\sig\ii$
with both positive and  negative exponents, we shall try to obtain an equivalent
word that is $\sigg$-positive---it is known that one cannot obtain both a
$\sigg$-positive and a $\sigg$-negative representative, so our attempt must fail
in some cases. The problem is to get rid of the letters~$\siginv\ii$ with
maximal index~$\ii$. We shall see that, without loss of generality, we can
assume that the initial word consists of an initial fragment---that will be called
dangerous---containing the negative letters (those with a negative exponent),
followed by a normal word, hence by a ladder according to
Proposition~\ref{P:NormalFormIsLadder}. Then, the main technical step
consists in proving that the product of a dangerous word with a ladder  can be
transformed using a simple algorithmic process called reversing into an
equivalent
$\sigg$-positive word: roughly speaking, ladders protect against dangerous
elements.

%
%

\subsection{D-words}

Up to now, we have considered braid words involving letters of two different
alphabets, namely the Artin generators~$\sig\ii$ and the
Birman--Ko--Lee generators~$\aa\indi\indii$. From now on,
we shall also use a third alphabet, corresponding to the following braids.

\begin{defi}
For $1\le\indi<\indii$, we put
\[
\dddd\indi\indii = \aa\indi\indip\,\aa\indip\indipp\,...\,\aa\indiio\indii \ (\  = \sig\indi\sig\indip \,...\, \sig\indiio ). 
\]
\end{defi}

So, in particular, the equalities
\begin{equation}
\label{E:DeltaRelations:DualGenerator}
\aa\indi\indii = \dddd\indi\indii \dddd\indi{\indii-1}\inv = \dddd\indi\indiio\ \sig\indiio\ \dddd\indi\indiio\inv
 \end{equation}
hold for $1\le\indi<\indii$.

Hereafter it is convenient to use $\dddd\indi\indii$ as a single letter. In this
context, a word on the letters~$\dddd\indi\indii^{\plusminus1}$ 
(\emph{resp.}
$\aa\indi\indii^{\plusminus1}$ and
$\dddd\indi\indii^{\plusminus1}$, \emph{resp.} $ \sig\ii^{\pm1}$) will be
called a
\emph{$d$-word} (\emph{resp.} an
\emph{$ad$-word}, \emph{resp.} a $\sigg$-word). 
We adopt the convention that the $d$-word $\dddd\indi\indi$ is the empty
word $\varepsilon$ for all $\indi$.

All words over the above alphabets represent braids, and they can be translated
into $\sigg$-words. It is coherent with the intended braid interpretations to define
words~$\aab\indi\indii$ and $\ddddb\indi\indii$ by

\begin{equation}
\label{E:Translation}
\aab\indi\indii=\sig\indi...\sig\indiit\sig\indiio\siginv\indiit...\siginv\indi, \quad
\ddddb\indi\indii=\sig\indi...\sig\indiio.
\end{equation}
In this way, for each $ad$-word~$\ww$, the braid represented by~$\ww$
coincides with the braid represented by the~$\sigg$-word~$\wwb$ obtained
from~$\ww$ by replacing every letter~$\aa\indi\indii$ by $\aab\indi\indii$ and every letter~$\dddd\indi\indii$ by $\ddddb\indi\indii$, and no ambiguity can
result from using different alphabets. Of course, if
$\ww$ and
$\www$ are two $ad$-words, we declare that $\ww\equiv\www$ is true if
the~$\sigg$-words $\wwb$ and $\wwwb$ are equivalent under the
braid relations~\eqref{E:BnPresentation}. Note in particular that the
braid represented by the~$d$-word~$\dddd1\nn$ is the Garside
braid~$\ddd\nn$.

The following equivalences of~$ad$-words easily result from the definitions.

\begin{lemm}
\label{L:DeltaRelations}
The following relations are satisfied:
\begin{subequations}
\renewcommand{\theequation}{\theprop.\roman{equation}}
\begin{align}
\label{E:DeltaRelations:Decomposition}
\dddd\indi\indiii&\equiv\dddd\indi\indii\ \dddd\indii\indiii&&\qquad\textrm{for $\indi<\indii<\indiii$},\\
\label{E:DeltaRelations:ImageByAutomorphism}
\ff\nn(\dddd\indi\indii)&\equiv\dddd\indip\indiip&&\qquad\textrm{for $\indi<\indii\le\nno$},\\
\label{E:DeltaRelations:Commutation}
\dddd\indi\indii\ \dddd{\indiii}{\indiv}&\equiv\dddd{\indiii}{\indiv}\ \dddd\indi\indii&&\qquad\textrm{for $\indi<\indii<\indiii<\indiv$},\\
\label{E:DeltaRelations:AutomorphismQuotient}
\dddd\indiii\indiv\inv\ \aa\indi\indii\ \dddd\indiii\indiv&\equiv\ff\indiv\inv\big(\ff\indiii(\aa\indi\indii)\big)&& \qquad\textrm{for $\indi<\indii\le\indiii<\indiv$}.
\end{align}
\end{subequations}
\end{lemm}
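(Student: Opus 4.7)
The four identities split naturally into two groups. Parts~(i), (ii) and~(iii) are essentially syntactic identities that can be verified by direct computation on letters, while part~(iv) is more substantive and relies on the Garside-automorphism relation supplied by Lemma~\ref{L:Rotation}.

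For~(i) and~(iii), the plan is to translate everything to $\sigma$-words using the substitution rule~\eqref{E:Translation}. Identity~(i) then amounts to the obvious splitting $\sig\indi\sig\indip\cdots\sig\indiiio = (\sig\indi\cdots\sig\indiio)(\sig\indii\cdots\sig\indiiio)$. Identity~(iii) follows because every $\sigma$-letter of $\ddddb\indi\indii$ has index at most~$\indiio$ while every $\sigma$-letter of~$\ddddb\indiii\indiv$ has index at least~$\indiii$, with $\indiii - \indiio \ge 2$, so every such pair commutes via the far-commutation braid relation. For~(ii), I would rather work directly on the $a$-word $\dddd\indi\indii = \aa\indi\indip\,\aa\indip\indipp\cdots\aa\indiio\indii$: every letter $\aa\kk\kkp$ appearing in this product satisfies $\kkp \le \indii \le \nno$, so the first clause of~\eqref{E:Rotation} applies uniformly and sends the product letterwise to $\aa\indip\indipp\cdots\aa\indii\indiip$, which is exactly~$\dddd\indip\indiip$.

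For~(iv), the key step is to reduce conjugation by the compound element $\dddd\indiii\indiv$ to two successive conjugations by genuine Garside elements. Applying~(i) with $\indi$ replaced by~$1$ gives $\dddd1\indiv \equiv \dddd1\indiii\cdot\dddd\indiii\indiv$, whence $\dddd\indiii\indiv \equiv \dddd1\indiii\inv\,\dddd1\indiv$. The essential observation is that $\dddd1\kk$ coincides with the Garside element~$\ddd\kk$ of the submonoid~$\BKL\kk$, so Lemma~\ref{L:Rotation} applied inside $\BKL\kk$ yields $\ddd\kk\,\br\,\ddd\kk\inv \equiv \ff\kk(\br)$ for every $\br$ in~$\BKL\kk$. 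Since the hypothesis $\indii \le \indiii$ places $\aa\indi\indii$ inside~$\BKL\indiii$, a first conjugation by~$\ddd\indiii$ converts it into $\ff\indiii(\aa\indi\indii)$, which still lies in $\BKL\indiii \subseteq \BKL\indiv$; a second conjugation by~$\ddd\indiv\inv$ then yields $\ff\indiv\inv(\ff\indiii(\aa\indi\indii))$, as required. The one point requiring justification is the internal use of Lemma~\ref{L:Rotation} with the smaller indices $\indiii$ and $\indiv$ in place of~$\nn$; this is legitimate because $\BKL\kk$ is itself a Garside monoid with Garside element~$\ddd\kk$ by Proposition~\ref{P:Garside}, so the same conjugation formula holds internally, and the degenerate cases $\indiv = \indiii + 1$ or $\indii = \indi$ reduce to trivial verifications under the convention $\dddd\indi\indi = \varepsilon$.
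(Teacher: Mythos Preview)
Your argument is correct and follows essentially the same route as the paper: parts~(i)--(iii) are handled by direct inspection of the $\sigma$- or $a$-word definitions together with~\eqref{E:Rotation} and the far-commutation relations, and part~(iv) is proved by factoring $\dddd\indiii\indiv \equiv \dddd1\indiii\inv\,\dddd1\indiv$ via~(i) and then invoking the Garside conjugation identity of Lemma~\ref{L:Rotation} twice, once in~$\BKL\indiii$ and once in~$\BKL\indiv$. The only minor remark is that your aside about ``degenerate cases $\indiv=\indiii+1$ or $\indii=\indi$'' is unnecessary: the hypothesis already enforces $\indi<\indii$, and $\indiv=\indiii+1$ is not degenerate at all.
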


\begin{proof}
Relation~\eqref{E:DeltaRelations:Decomposition} holds by definition of~$\dddd\indi\indii$.
Relation~\eqref{E:DeltaRelations:ImageByAutomorphism} is an immediate consequence of \eqref{E:Rotation}. 
For \eqref{E:DeltaRelations:Commutation}, we observe that the $\sig\ii$ of
greatest index occurring in~$\dddd\indi\indii$ is $\sig\indiio$, while the
$\sig\ii$ of lower index occurring in~$\dddd{\indiii}{\indiv}$ is
$\sig{\indiii}$. As $\indii<\indiii$ implies $\indiio\le\indiii\minus2$, we can
apply the Artin commutativity relation of~\eqref{E:BnPresentation} to obtain
the expected result.

It remains to prove~\eqref{E:DeltaRelations:AutomorphismQuotient}.
First, \eqref{E:DeltaRelations:Decomposition} implies
$\dddd1\indiv\equiv\dddd1\indiii\,\dddd\indiii\indiv$, hence
$\dddd\indiii\indiv\equiv\dddd1\indiii\inv\,\dddd1\indiv$.
We deduce
$\dddd\indiii\indiv\inv\ \aa\indi\indii\
\dddd\indiii\indiv \equiv \dddd1\indiv\inv\ \dddd1\indiii\,\aa\indi\indii\,\dddd1\indiii\inv\
\dddd1\indiv$ . As, by hypothesis, $\aa\indi\indii$ lies in
$\BKL\indiii$, the subword~$\dddd1\indiii\,\aa\indi\indii\,\dddd1\indiii\inv$ is equivalent
to~$\ff\indiii(\aa\indi\indii)$. Finally the conjunction of $\BKL\indiii\subseteq\BKL\indiv$  and
$\ff\indiii(\aa\indi\indii)\in\BKL\indiii$ implies
$\dddd1\indiv\inv\,\ff\indiii(\aa\indi\indii)\,\dddd1\indiv
\equiv \ff\indiv\inv\big(\ff\indiii(\aa\indi\indii)\big)$.
\end{proof}

\subsection{Sigma-positive words}

Our aim is to obtain $\sigg$-positive and $\sigg$-negative
representative words. We shall need slightly more precise versions of these
notions.

\begin{defi}
\label{D:SigmaPositiveSigmaWord}
\label{D:SigmaNonNegative}
\label{D:SigmaPositiveADWord}
$(i)$ A $\sigg$-word~$\ww$ is called \emph{$\sig\ii$-positive} (resp.
$\sig\ii$-negative) if $\ww$ contains at least one letter $\sig\ii$, no letter
$\siginv\ii$ (resp. at least one letter
$\siginv\ii$ and  no letter~$\sig\ii$) and no letter $\sigpm\jj$ for $\jj\ge\ii$.

$(ii)$ A $\sigg$-word~$\ww$ is said to be \emph{$\sig\ii$-nonnegative} if  it is
$\sig\ii$-positive, or it does not contain the letter~$\sigpm\jj$ with $\jj\ge\ii$.

$(iii)$ An $ad$-word~$\ww$ is called $\sig\ii$-positive (resp. $\sig\ii$-negative, resp. $\sig\ii$-nonnegative)
if the word~$\wwb$ is $\sig\ii$-positive (resp. $\sig\ii$-negative, resp.
$\sig\ii$-nonnegative).
\end{defi}

\begin{exam}
A $\sigg$-word cannot be simultaneously $\sig\ii$-positive and
$\sig\ii$-negative, but, on the other hand, a $\sigg$-word can be neither
$\sig\ii$-positive nor $\sig\ii$-negative for any~$\ii$. For instance,
$\sig2\sig1\siginv2$ is neither $\sig2$-positive (since it contains a
letter~$\siginv2$), nor $\sig2$-negative (since it contains a letter~$\sig2$), nor
$\sig1$-positive or $\sig1$-negative (since it contains a letter~$\sig2$). By
contrast, the equivalent word $\siginv1\sig2\sig1$ is $\sig2$-positive.
On the other hand, the empty word, $\siginv1$, and $\sig2\,\siginv1$ are
$\sig2$-nonnegative words, since the letter~$\siginv2$ does not occur in it.

As for $a$-words, $\aa23\inv \aa13$ is not $\sig2$-positive,
since its translation under~\eqref{E:Translation} is
the~$\sigg$-word~$\siginv2 \sig1
\sig2
\siginv1$, which is not $\sig2$-positive as it contains the letter~$\siginv2$.
However, the previous $a$-word is equivalent to the~$a$-word~$\aa13
\aa12\inv$, which translates into~$\sig1 \sig2 \siginv1 \siginv1$ and is
therefore $\sig2$-positive, since $\sig1 \sig2 \siginv1 \siginv1$ contains one
letter~$\sig2$ and no letter~$\siginv2$.
\end{exam}

An immediate consequence of Definition~\ref{D:SigmaPositiveADWord}$(iii)$ is

\begin{lemm}
An $ad$-word~$\ww$ is $\sig\ii$-positive if $\ww$ contains at least one letter 
$\aa{..}\iip$ or $\dddd{..}\iip$, and no letter $\aa{..}\iip\inv$,
$\dddd{..}\iip\inv$, $\aa{..}\jj^{\pm1}$, or ~$\dddd{..}\jj^{\pm1}$ with
$\jj>\iip$.
\end{lemm}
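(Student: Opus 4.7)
The plan is to unfold the definition and work directly from the translation formulas in \eqref{E:Translation}. By Definition~\ref{D:SigmaPositiveADWord}$(iii)$, the $ad$-word $\ww$ is $\sig\ii$-positive if and only if the $\sigg$-word $\wwb$ is $\sig\ii$-positive, that is to say $\wwb$ contains at least one letter $\sig\ii$, no letter $\siginv\ii$, and no letter $\sigpm\jj$ with $\jj>\ii$. So the whole argument reduces to inspecting which letters $\sigpm\kk$ arise in the translation of each individual $ad$-letter of~$\ww$.

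First I would record, from~\eqref{E:Translation}, the following elementary facts. For $\indi<\indii$, the $\sigg$-word $\aab\indi\indii$ (resp.~$\ddddb\indi\indii$) involves only generators $\sig\kk^{\pm1}$ with $\indi\le\kk\le\indiio$; among these, the letter of maximal index $\indiio$ occurs exactly once, and it occurs as $\sig\indiio$, never as $\siginv\indiio$. The corresponding statement for the inverses $\aab\indi\indii\inv$ and $\ddddb\indi\indii\inv$ is the symmetric one: the maximal index is again~$\indiio$, but now it appears only as $\siginv\indiio$ and never as~$\sig\indiio$.

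Next I would translate the three hypotheses into statements about $\wwb$ via this dictionary. The hypothesis that $\ww$ contains no letter $\aa{..}\jj^{\pm1}$ or $\dddd{..}\jj^{\pm1}$ with $\jj>\iip$ ensures that no letter of~$\ww$ contributes any $\sig\kk^{\pm1}$ with $\kk\ge\iip$, hence that $\wwb$ contains no $\sigpm\jj$ with $\jj>\ii$. The hypothesis that $\ww$ contains no $\aa{..}\iip\inv$ or $\dddd{..}\iip\inv$ ensures that no letter of~$\ww$ contributes any $\siginv\ii$: indeed, letters with second index at most~$\ii$ involve only $\sig\kk^{\pm1}$ with $\kk\le\ii-1$, letters with second index greater than $\iip$ have already been excluded, and the remaining letters $\aa{..}\iip$ and $\dddd{..}\iip$ contribute $\sig\ii$ only positively. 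Finally, the existence hypothesis gives at least one letter $\aa{..}\iip$ or $\dddd{..}\iip$ in~$\ww$, whose translation contributes at least one occurrence of $\sig\ii$ in~$\wwb$.

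Putting the three bullets together shows that $\wwb$ satisfies all requirements of Definition~\ref{D:SigmaPositiveSigmaWord}$(i)$, hence is $\sig\ii$-positive, and thus $\ww$ is $\sig\ii$-positive. There is no real obstacle here: the proof is purely a matter of reading off which $\sig\kk^{\pm1}$ appear in each of the four types of $ad$-letters and bookkeeping. The one point that needs a little care is noting that \emph{only} letters $\aa{..}\iip$, $\aa{..}\iip\inv$, $\dddd{..}\iip$, $\dddd{..}\iip\inv$ (or letters with second index~$>\iip$) can contribute $\sig\ii^{\pm1}$ to $\wwb$ at all, since a letter $\aa\pp\qq^{\pm1}$ or $\dddd\pp\qq^{\pm1}$ with $\qq\le\ii$ involves only generators $\sig\kk^{\pm1}$ with $\kk\le\ii-1$.
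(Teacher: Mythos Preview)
Your proof is correct and is precisely the argument the paper has in mind: the paper gives no proof at all, introducing the lemma as ``an immediate consequence of Definition~\ref{D:SigmaPositiveADWord}$(iii)$,'' and what you have written is exactly the routine unfolding of the translation formulas~\eqref{E:Translation} that makes this immediacy explicit.
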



\subsection{Dangerous words}

We arrive at a key notion. The problem is
to identify the generic form of the $\sigg$-negative fragments we wish to control
and, possibly, get rid of. It turns out that the convenient notion is defined in terms of
the letters~$\dddd\indi\indii\inv$, and it is what we call a dangerous word.

\begin{defi} 
For $\nn\ge3$, a $d$-word is called \emph{$\aa\indi\nn$-dangerous of type
$\indii$} if it has the form 
\begin{equation}
\label{E:Dangerous}
\dddd{\findi\dd}\nno\inv\ \dddd{\findi\ddo}\nno\inv\ ...\ \dddd{\findi1}\nno\inv 
\end{equation} 
with $\indii=\findi\dd\ge\findi\ddo\ge...\ge\findi1=\indi$. 
\end{defi}

By convention the unique $\aa\nno\nn$-dangerous word is the empty word.

Note that a dangerous $d$-word~$\ww$ is completely determined  by
the~$\sigg$-word~$\wwb$. Indeed, we recover~$\ww$ from~$\wwb$
by gathering the $\siginv\ii$'s and cutting before each letter~$\siginv\nnt$. For
instance,
$\siginv3 \siginv2 \siginv3 \siginv2 \siginv1$ can only be the translation of the~$\aa15$-dangerous word~$\dddd24\inv \dddd14\inv$.

At this point, the definition of a dangerous word comes out of a hat.
For the moment, let us observe that the letter~$\aa\indi\nn$ is equivalent to~$\dddd\indi\nn\dddd\indi\nno\inv$.
In this expression, $\dddd\indi\nno\inv$, which is $\aa\indi\nn$-dangerous, corresponds to the negative fragment of~$\aa\indi\nn$.
This reflects the intuition that dangerous words are associated with the negative parts of~$a$-words---hence with their dangerous parts in view of our aim, which is to find $\sigg$-positive expressions.

%
 %

\subsection{The reversing algorithm}
\label{SS:ReversingDiagram}

The aim of this section is to describe an algorithm that, starting with an $\aa\indi\nn$-dangerous word~$\uu$ and an $\aa\indi\nn$-ladder~$\ww$, returns a $\sig\nnt$-positive word~$\www$ 
that is equivalent to~$\uu\,\ww$ and that is close to be an $\aa\indi\nn$-ladder
in a sense that will be defined below.

The basic ingredient is a process called \emph{reversing} that transforms (certain)
$ad$-words with letters $\dddd{..}\nno\inv$ on the left into equivalent words
with letters $\dddd{..}\nno\inv$ on the right (or with no letter $\dddd{..}\nno\inv$ at all). Thus
reversing is a process for pushing letters $\dddd{..}\nno\inv$ to the right.

\begin{defi}
\label{D:Reversing}
Let $\ww, \www$ be $ad$-words.
We declare that $\ww\rev1\www$ is true if $\www$ is obtained from $\ww$ by replacing  a
subword $\uu$ of $\ww$ by a word $\uuu$ such that $(\uu,\uuu)$ is one of 
the pairs
\begin{align}
\label{E:Reversing:i}&(\dddd\pp\nno\inv\aa\indiii\indiv, \quad\revi\indi(\aa\indiii\indiv)\,\dddd\indi\nno\inv) &&\text{with $\indiv\le\indi\le\nnt$ or $\indi\le\indiii\le\nnt$},\\
\label{E:Reversing:ii}&(\dddd\pp\nno\inv\aa\indiii\indiv, \quad\dddd\indiii\nno\revii\indi(\aa\indiii\indiv)\,\dddd\indiv\nno\inv) &&\text{with $\indiii<\indi<\indiv\le\nno$},\\
\label{E:Reversing:iii}&(\dddd\pp\nno\inv\dddd\indiii\nno, \quad\dddd\indiii\nno\,\reviii\indi) &&\text{with $\indiii<\indi\le\nnt$},
\end{align}
with
\begin{equation*}
\revi\indi(\aa\indiii\indiv)=\begin{cases}
\aa\indiii\nno & \text{for $\indiv=\indi$},\\
\aa\indiii\indiv & \text{for $\indiv<\indi$},\\
\aa\indivo\nno&\text{for $\indiii=\indi$},\\
\aa\indiiio\indivo&\text{for $\indiii>\indi$}.
\end{cases},\quad
\begin{array}{l}
\revii\indi(\aa\indiii\indiv)=\dddd\indio\nnt\inv\,\dddd\indiii\indivo\inv,\\
\\
\reviii\indi=\dddd\indio\nnt\inv.
\end{array}
\end{equation*}
We say that $\ww$ \emph{reverses} to~$\www$, denoted
$\ww\revv\www$,  if there exists a sequence of words
$\ww_0$,$\ww_1$,...,$\ww_\ll$ satisfying
$\ww_0=\ww$, $\ww_\ll=\www$, and $\ww_\kk\rev1\ww_\kkp$  for every
$\kk$.
\end{defi}

Before giving an example, we introduce the notion of a reversing diagram, which
enables one to conveniently illustrate the reversing process. Assume that
$\ww_0,\ww_1,\Ldots,\ww_\ll$ is a reversing sequence, \ie, is a sequence of $ad$-words such that $\ww_\kk\rev1\ww_\kkp$ holds for every~$\kk$. First, 
we associate with $\ww_0$ a path labeled with the successive letters
of~$\ww_0$: we associate to every letter~$\dddd\indi\nno\inv$ a vertical
down-oriented edge labeled $\dddd\indi\nno$, and to every other letter~$\xx$ a
horizontal right-oriented edge labeled $\xx$. Then, we successively represent the
words $\ww_1,\Ldots,\ww_\ll$ as follows : if $\ww_\kkp$ is obtained from
$\ww_\kk$ by replacing $\dddd\indi\nno\inv\,\xx$ by
$\uu\,\dddd\indii\nno\inv$ (where
$\dddd\indi\nno\inv\,\xx\revv\uu\,\dddd\indii\nno\inv$ holds), then we
complete the pattern associated with the subword~$\dddd\indi\nno\inv\,\xx$ using
right-oriented edges labeled $\uu$ and down-oriented edge labeled~$\dddd\indii\nno$, see Figure~\ref{F:Reversing:Definition}.
\begin{figure}[htb]
\begin{picture}(121,20)
\put(8,2){\includegraphics{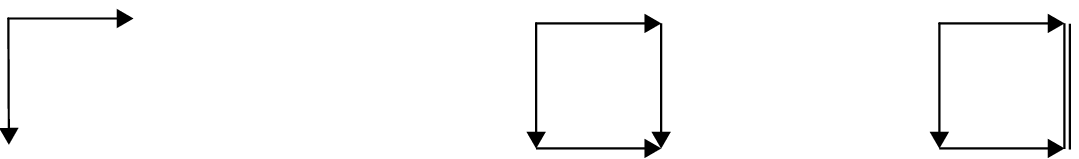}}
{\footnotesize
\put(0,10){$\dddd\indi{\nn-1}$}
\put(14,17){$\xx$}
\put(26,8){is completed into}
\put(53,10){$\dddd\indi{\nn-1}$}
\put(67,17){$\xx$}
\put(67,0.5){$\uu$}
\put(76,10){$\dddd\indii{\nn-1}$}
\put(88,8){or}
\put(94,10){$\dddd\indi{\nn-1}$}
\put(109,17){$\xx$}
\put(109,0.5){$\uu$}
\put(118,10){$\varepsilon$}
}
\end{picture}
\caption{{\sf \smaller Reversing of~$\dddd\indi\nno\inv\,\xx$
into~$\uu\,\dddd\indii\nno$. We replace the down-oriented edge
labeled~$\dddd\indii\nno$ by a vertical double line labeled $\varepsilon$
whenever the relation $\indii=\nno$ holds, \ie , $\dddd\indii\nno\equiv\varepsilon$ holds.}}
\label{F:Reversing:Definition}
\end{figure}

Assume that $\ww$ and $\www$ are $ad$-words and $\ww$ reverses
to $\www$. Then the reversing sequence going from $\ww$ to $\www$ is not
unique in general, but the resulting reversing diagram depends on $\ww$
and $\www$ only. Reversing can easily be turned into a (deterministic)
algorithm by choosing to always reverse the rightmost possible subword.
The algorithm terminates when a word with no subword $\dddd\indi\nno\inv\,\xx$ satisfying $\dddd\indi\nno\inv\,\xx\revv\uuu$ for some $\uuu$ has been obtained. This algorithm is called
\emph{reversing algorithm}. See Figure~\ref{F:X:ReversingDiagram} for an
example.

\begin{figure}[htb]
\begin{picture}(110,32)
\put(5,3){\includegraphics{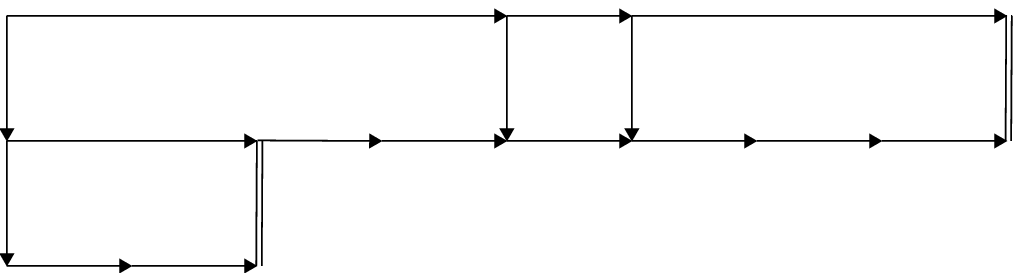}}
{\footnotesize
\put(0,22){$\dddd24$}
\put(0,10){$\dddd34$}
\put(10,0.7){$\dddd14$}
\put(16,13){$\dddd14$}
\put(22,0.8){$\dddd23\inv$}
\put(28,30.5){$\aa13$}
\put(32.5,10){$\varepsilon$}
\put(35,13.5){$\dddd23\inv$}
\put(47,13.5){$\dddd12\inv$}
\put(50.5,22){$\dddd34$}
\put(60.5,30.5){$\aa13$}
\put(60.5,13.8){$\aa13$}
\put(70,22){$\dddd34$}
\put(74,13.5){$\dddd24$}
\put(86,13.5){$\dddd23\inv$}
\put(99,13.5){$\dddd23\inv$}
\put(86,30.5){$\aa24$}
\put(108.5,23){$\varepsilon$}}
\end{picture}
\caption{{\sf \smaller Reversing diagram
of $\dddd34\inv\,\dddd24\inv\,\aa13\,\aa13\,\aa24$. We end with
$\dddd14\,\dddd23\inv\,\dddd23\inv\,\dddd12\inv\,\aa13\,\dddd24\,\dddd23\inv\,\dddd23\inv$.
Each rectangle in the diagram corresponds to one relation $\uu\rev1\uuu$,
hence the number of rectangles is the length of every reversing sequence
$(\ww_0, ..., \ww_\ll)$ from~$\ww_0$ to~$\ww_\ll$: the sequence is not
unique, but its length and the corresponding diagram are.}}
\label{F:X:ReversingDiagram}
\end{figure}

\begin{rema}
Formally, the above notion of reversing is similar to the transformation called
``word reversing'' in~\cite{Dehornoy:GP}. However, similarity is superficial
only: what is common is the idea of iteratively pushing some specific factors to
the right, but the considered factors and the basic switching rules are
completely different.
\end{rema}

The first, easy observation is that reversing transforms a braid word into an equivalent braid word.

\begin{lemm}
\label{L:Reversing:Equivalence}
For $\ww, \www$ $ad$-words, $\ww\revv\www$ implies  $\ww\equiv\www$.
\end{lemm}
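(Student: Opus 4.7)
The plan is to reduce the statement to a local, finite, case-by-case check. By Definition~\ref{D:Reversing}, $\revv$ is the reflexive--transitive closure of $\rev1$, and $\equiv$ is transitive, so it suffices to handle a single reversing step. Moreover, $\equiv$ is by construction a congruence on $ad$-words, since it is defined via the underlying $\sigg$-word equivalence under the braid relations; hence if the elementary step $\ww \rev1 \www$ replaces a subword $\uu$ of $\ww$ by $\uuu$, the desired $\ww \equiv \www$ will follow at once from $\uu \equiv \uuu$. The whole statement therefore reduces to checking, for each of the three pairs $(\uu,\uuu)$ listed in~\eqref{E:Reversing:i}, \eqref{E:Reversing:ii}, and~\eqref{E:Reversing:iii}, that the two $ad$-words represent the same braid of $\BB\nn$.

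Pair~\eqref{E:Reversing:i} asserts $\dddd\indi\nno\inv\,\aa\indiii\indiv\,\dddd\indi\nno \equiv \revi\indi(\aa\indiii\indiv)$. In the subcase $\indiv \le \indi$, the conjugation formula~\eqref{E:DeltaRelations:AutomorphismQuotient} (taken with the outer $\dddd{..}{..}$ equal to $\dddd\indi\nno$) rewrites the left-hand side as $\ff\nno\inv(\ff\indi(\aa\indiii\indiv))$; the rotation formulas~\eqref{E:Rotation} then identify this braid with the piecewise value $\revi\indi(\aa\indiii\indiv)$, split according to whether $\indiv < \indi$ or $\indiv = \indi$. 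In the subcase $\indi \le \indiii$, a short computation using~\eqref{E:DeltaRelations:Decomposition} to split $\dddd\indi\nno$ across the pivot $\indiv$, together with the braid relations~\eqref{E:DualCommutativeRelation} and~\eqref{E:DualNonCommutativeRelation}, yields the prescribed image $\aa\indivo\nno$ or $\aa\indiiio\indivo$ according to whether $\indiii = \indi$ or $\indiii > \indi$. Pair~\eqref{E:Reversing:iii} asks for $\dddd\indi\nno\inv\,\dddd\indiii\nno \equiv \dddd\indiii\nno\,\dddd\indio\nnt\inv$; after translating via~\eqref{E:Translation}, this is a short Artin-braid computation.

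The substantive case is pair~\eqref{E:Reversing:ii}, where the barrier configuration $\indiii < \indi < \indiv$ prevents any single commutation or conjugation from pushing $\dddd\indi\nno\inv$ past $\aa\indiii\indiv$. The strategy here is to split both $\dddd\indi\nno$ and $\aa\indiii\indiv$ along the three pivot indices $\indiii, \indi, \indiv$: first write $\aa\indiii\indiv \equiv \dddd\indiii\indiv\,\dddd\indiii\indivo\inv$ using~\eqref{E:DeltaRelations:DualGenerator}, then decompose $\dddd\indi\nno \equiv \dddd\indi\indiv\,\dddd\indiv\nno$ and $\dddd\indiii\nno \equiv \dddd\indiii\indi\,\dddd\indi\nno$ via~\eqref{E:DeltaRelations:Decomposition}, and finally reassemble the pieces using the commutation relation~\eqref{E:DeltaRelations:Commutation} and the conjugation formula~\eqref{E:DeltaRelations:AutomorphismQuotient}. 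The three factors $\dddd\indiii\nno$, $\revii\indi(\aa\indiii\indiv) = \dddd\indio\nnt\inv\,\dddd\indiii\indivo\inv$, and $\dddd\indiv\nno\inv$ should emerge in that order. The bookkeeping in this last case is the only part of the argument that is not an immediate one-line check; everything else follows mechanically from Lemma~\ref{L:DeltaRelations} together with the defining relations of the dual braid monoid.
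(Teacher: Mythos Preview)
Your proposal is correct and follows the same approach as the paper: reduce to a single $\rev1$ step, then verify $\uu \equiv \uuu$ for each of the three local pairs using the identities of Lemma~\ref{L:DeltaRelations}. The one place worth comparing is case~\eqref{E:Reversing:ii}: rather than your triple-pivot decomposition, the paper writes $\dddd\indiii\indiv \equiv \dddd\indiii\nno\,\dddd\indiv\nno\inv$, commutes $\dddd\indiv\nno\inv$ past $\dddd\indiii\indivo\inv$ via~\eqref{E:DeltaRelations:Commutation}, and then simply invokes the already-established case~\eqref{E:Reversing:iii} on the leading factor $\dddd\indi\nno\inv\,\dddd\indiii\nno$, which sidesteps the extra bookkeeping you allude to.
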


\begin{proof}
A simple verification. It is sufficient to 
prove that
$\ww\rev1\www$ implies
$\ww\equiv\www$, hence to prove that $\uu \equiv \uu'$ holds for each pair
$(\uu, \uu')$ of Definition~\ref{D:Reversing}.  
We start with~\eqref{E:Reversing:i}.
Assume first $\indiv\le\indi$.
Relation~\eqref{E:DeltaRelations:AutomorphismQuotient} implies 
\begin{equation}
\label{L:Reversing:Equivalence:1}
\dddd\indi\nno\inv\,\aa\indiii\indiv\,\dddd\indi\nno\equiv\ff\nno\inv(\ff\indi(\aa\indiii\indiv)).
\end{equation}
For~$\indiv<\indi$, we have
$\ff\indi(\aa\indiii\indiv)=\aa\indiiip\indivp$, and 
\eqref{L:Reversing:Equivalence:1} implies
$\dddd\indi\nno\inv\,\aa\indiii\indiv\equiv\aa\indiii\indiv\,\dddd\indi\nno\inv$.
For~$\indiv=\indi$, we have $\ff\indi(\aa\indiii\indiv)=\aa1\indiiip$, and 
\eqref{L:Reversing:Equivalence:1} implies
$\dddd\indi\nno\inv\,\aa\indiii\indiv\equiv\aa\indiii\nno\,\dddd\indi\nno\inv$. 

Assume now  $\indiii\ge\indi$.
Relation~\eqref{E:DeltaRelations:Decomposition} implies $\dddd\indi\nno\equiv\dddd1\indi\inv\,\dddd1\nno$, hence 
\begin{equation}
\label{L:Reversing:Equivalence:2}
\dddd\indi\nno\inv\,\aa\indiii\indiv\,\dddd\indi\nno\equiv\dddd1\nno\inv\,\dddd1\indi\,\aa\indiii\indiv\,\dddd1\indi\inv\,\dddd1\nno.
\end{equation}
For $\indiii>\indi$, \eqref{E:DeltaRelations:DualGenerator} with \eqref{E:DeltaRelations:Commutation} imply that
 $\dddd1\indi$ and $\aa\indiii\indiv$ commute. \!Then, \!\eqref{L:Reversing:Equivalence:2} gives
$\dddd\indi\nno\inv\,\aa\indiii\indiv\,\dddd\indi\nno\equiv\ff\nno\inv(\aa\indiii\indiv)$,
\ie, $\dddd\indi\nno\inv\,\aa\indiii\indiv\equiv\aa\indiiio\indivo\,\dddd\indi\nno\inv$.
For $\indiii=\indi$, Relation \eqref{E:DeltaRelations:DualGenerator} gives
$\dddd1\indi\,\aa\indiii\indiv\,\dddd1\indi\inv=\aa1\indiv$. Then,
\eqref{L:Reversing:Equivalence:2} gives
$\dddd\indi\nno\inv\,\aa\indiii\indiv\,\dddd\indi\nno\equiv\ff\nno\inv(\aa1\indiv)$,
\ie,
$\dddd\indi\nno\inv\,\aa\indiii\indiv\equiv\aa\indivo\nno\,\dddd\indi\nno\inv$.

Next, we consider~\eqref{E:Reversing:iii}.
Relation~\eqref{E:DeltaRelations:Decomposition} implies $\dddd\indiii\nno\equiv\dddd1\indiii\inv\,\dddd1\nno$, hence 
\begin{equation}
\label{L:Reversing:Equivalence:3}
\dddd\indiii\nno\inv\, \dddd\indi\nno\inv\, \dddd\indiii\nno\, \equiv \dddd1\nno\inv\, \dddd1\indiii \,\dddd\indi\nno \, \dddd1\indiii\inv\, \dddd1\nno.
\end{equation}
By \eqref{E:DeltaRelations:Commutation}, $\dddd1\indiii$ and
$\dddd\indi\nno$ commute, hence
$\dddd1\indiii\,\dddd\indi\nno\,\dddd1\indiii\inv\equiv\dddd\indi\nno$
holds. Then, \eqref{L:Reversing:Equivalence:3} implies
$\dddd\indiii\nno\inv\,\dddd\indi\nno\inv\,\dddd\indiii\nno\equiv\ff\nno\inv(\dddd\indi\nno\inv)$.
From the relation~\eqref{E:DeltaRelations:ImageByAutomorphism}, we obtain
$\dddd\indi\nno\inv\,\dddd\indiii\nno\,\equiv\,\dddd\indiii\nno\,\dddd\indio\nnt\inv$.

Finally, we consider~\eqref{E:Reversing:ii}.
By \eqref{E:DeltaRelations:DualGenerator}, we have $\aa\indiii\indiv\equiv\dddd\indiii\indiv\,\dddd\indiii\indivo\inv$.
Relation \eqref{E:DeltaRelations:Decomposition} implies $\dddd\indiii\indiv\equiv\dddd\indiii\nno\,\dddd\indiv\nno\inv$, hence we find
\begin{equation}
\label{L:Reversing:Equivalence:4}
\dddd\indi\nno\inv\,\aa\indiii\indiv\equiv\dddd\indi\nno\inv\,\dddd\indiii\nno\,\dddd\indiv\nno\inv\,\dddd\indiii\indivo\inv.
\end{equation}
By \eqref{E:DeltaRelations:Commutation}, the letters $\dddd\indiv\nno$ and $\dddd\indiii\indivo\inv$ commute.
Moreover, \eqref{E:Reversing:iii} implies that the word $\dddd\indi\nno\inv\,\dddd\indiii\nno$ is equivalent to $\dddd\indiii\nno\,\dddd\indio\nnt\inv$.
Hence, from \eqref{L:Reversing:Equivalence:4}, we obtain the relation $\dddd\indi\nno\inv\,\aa\indiii\indiv\equiv\dddd\indiii\nno\,\dddd\indio\nnt\inv\,\dddd\indiii\indivo\inv\,\dddd\indiv\nno\inv$.
\end{proof}

%
%

\section{Walls}
\label{S:Walls}

%
%

We shall now apply the reversing algorithm of Section~\ref{SS:ReversingDiagram} to 
those words that consist of an $\aa\indi\nn$-dangerous word followed by an
$\aa\indi\nn$-ladder, with the aim of obtaining an equivalent $\sig\ii$-positive
word whenever this is possible.

Once again, the problem is to identify the generic
form of the final words we can obtain. A new type of braid words called \emph{walls}
occurs here, and the main result is that reversing a word consisting of a dangerous
word followed by a ladder always results in a $\sigg$-nonnegative word that is a
wall.

\subsection{Dangerous against ladders: case of length $1$}

We first concentrate on the case when the dangerous word has length $1$, \ie,
it consists of a single negative $d$-letter---the general case will be handled in
Section~\ref{SS:DangerousAgainstLadder:GeneralCase}. In view of Theorems~1
and~2, we shall not only describe the resulting word~$ad$-word, but
also compute both the time and space complexity of the algorithm involved in
the transformation. 

First we introduce now the notion of a \emph{wall}, a weak variant of a ladder.
It comes in two versions called \emph{high} and \emph{low}.

\begin{defi}
\label{D:Wall}
For $\nn\ge3$ and $\indi\le\nnt$, we say that an $ad$-word~$\ww$ is a \emph{high $\aa\indi\nn$-wall lent on~$\aa\indiio\nno$} if there exists a 
decomposition 
$$\ww=\uu\cdot\dddd\indiii\nno\cdot\www\cdot\dddd\indiio\nno\cdot
\vv$$ 
such that
\begin{conditions}
\label{D:Wall:i}
\cond{$\uu$ is a positive $a$-word,}\\
\label{D:Wall:ii}
\cond{$\indiii<\indi$ holds,}\\
\label{D:Wall:iii}
\cond{$\www$ is a $\sig\nnt$-nonnegative $ad$-word,}\\
\label{D:Wall:iv}
\cond{$\vv$ is $\aa\indiio\nno$-dangerous.}
\end{conditions}
We say that an $ad$-word~$\ww$ is a \emph{low $\aa\indi\nn$-wall lent on~$\aa\indiio\nno$} if there exists a 
decomposition 
$$\ww=\uu\cdot\dddd\indiio\nno\cdot\vv$$
such that
\begin{conditions}
\setcounter{equation}{4}
\label{D:Wall:i'}
\cond{$\uu$ is a positive~$a$-word,}\\
\label{D:Wall:ii'}
\cond{$\indiio<\indi$ holds,}\\
\label{D:Wall:iii'}
\cond{$\vv$ is an $\aa\indiio\nno$-dangerous of type $\indi'<\indi$.}
\end{conditions}
In both cases, we write \emph{$\floor\ww$} for the
word denoted $\uu$  above, and
\emph{$\dang\ww$} for the
word denoted $\vv$ above.
\end{defi}

We say that an $a$-word~$\ww$ is an \emph{$\aa\indi\nn$-wall} if it is
either a high or a low~$\aa\indi\nn$-wall. 

Note that the condition satisfied by the letter $\dddd\indiii\nno$  occurring in
the decomposition of a high wall is  the condition satisfied by the
$\aa\indi\nn$-barrier $\aa\indiii\nno$. The same property holds for the letter $\dddd\indiio\nno$ occurring in
the decomposition of a low wall. 

So far we have defined $\aa\pp\nn$-walls for $\pp \le \nnt$ only. We
now consider $\aa\nno\nn$-walls, which are special as are $\aa\nno\nn$-ladders.

\begin{defi}
For $\nn\ge3$, we say that an
$ad$-word~$\ww$ is an \emph{$\aa\nno\nn$-wall lent on~$\aa\indiio\nno$} if $\ww$
can be decomposed as
$\uu\cdot\dddd\indiio\nno\cdot\vv$ with
$\uu$ a positive~$a$-word and $\vv$ an $\aa\indiio\nno$-dangerous word. Then we define $\floor{\ww}=\uu$ and
$\dang{\ww}=\vv$.
\end{defi}

By definition, every $\aa\indi\nn$-wall lent on~$\aa\indiio\nno$ is also an
$\aa\indiii\nn$-wall lent on~$\aa\indiio\nno$ for $\indiii\ge\indi$.

Walls are introduced in order to describe the output of the reversing algorithm
running on those words that consist of an $\aa\indi\nn$-dangerous
word followed by an $\aa\indi\nn$-ladder.

\begin{lemm}
\label{L:DangerousAgainstLadder}
Let $\ww$ be an $\aa\indi\nn$-ladder lent on~$\aa\indiio\nno$ with $\indi\le\nnt$ and $\nn\ge\nobreak3$.
Let $\ww_0\,\xx_1\,\Ldots\,\xx_\hh\,\ww_\hh$ be the decomposition of $\ww$ as a ladder.
Then $\dddd\indi\nno\inv\,\ww$ is equivalent to an $\aa\indi\nn$-wall~$\www$ lent
on~$\aa\indiio\nno$.
The latter can be computed using at most~$\ll$ reversing steps plus one basic
operation, and it satisfies
\begin{conditions}
\label{L:DAL:i}
\cond{$\len{\floor\www}=\len{\ww_0}$,}\\
\label{L:DAL:ii}
\cond{$\len{\dang\www}\le2$,}\\
\label{L:DAL:iii}
\cond{$\len{\www}\le\len{\ww}\plus2(\hho)\plus2\len{\ww_\hh}\plus\len{\dang{\www}}$,}\\
\label{L:DAL:iv}
\cond{$\www$ is a high wall for $\ww_\hh\not=\varepsilon$.}
\end{conditions}
\end{lemm}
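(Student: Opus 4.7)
The plan is to apply the reversing algorithm of Section~\ref{SS:ReversingDiagram} to the word $\dddd\indi\nno\inv\,\ww$, processing $\ww$ letter by letter from left to right, and then complete the transformation with at most one final basic operation. Writing $\xx_\kk=\aa{\indiii_\kk}{\findi\kk}$, the ladder structure prescribes exactly which rule fires at each position. Inside each segment $\ww_\kk$ with $\kk<\hh$, no letter is an $\aa{\findi\kk}\nn$-barrier (Definition~\ref{D:Ladder}$(ii)$), so rule~\eqref{E:Reversing:i} applies and merely replaces each positive letter by another positive letter while letting the current $\dddd{..}\nno\inv$ slide through. At each barrier $\xx_\kk$, rule~\eqref{E:Reversing:ii} rewrites $\dddd{\findi\kko}\nno\inv\,\xx_\kk$ as $\dddd{\indiii_\kk}\nno\cdot J_\kk\cdot\dddd{\findi\kk}\nno\inv$, with $J_\kk=\dddd{\findi\kko\minus1}\nnt\inv\,\dddd{\indiii_\kk}{\findi\kk\minus1}\inv$ of length~$2$. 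Crucially, $\findi\hh=\nno$, so the trailing factor $\dddd\nno\nno\inv$ produced at the last barrier is empty by convention, and no negative $d$-letter survives to push through $\ww_\hh$.

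Once the reversing terminates, the output has the shape
\[
\ww_0'\cdot\dddd{\indiii_1}\nno\cdot J_1\,\ww_1'\,\dddd{\indiii_2}\nno\,J_2\cdots\dddd{\indiii_\hh}\nno\,J_\hh\cdot\ww_\hh,
\]
where each $\ww_\kk'$ is the positive $a$-word of length $\len{\ww_\kk}$ obtained from $\ww_\kk$ by iterating rule~\eqref{E:Reversing:i}. When $\ww_\hh\ne\varepsilon$, I apply the single basic operation $\aa\indiio\nno\equiv\dddd\indiio\nno\,\dddd\indiio\nnt\inv$ from~\eqref{E:DeltaRelations:DualGenerator} to the last letter of $\ww_\hh$; this exposes the trailing $\dddd\indiio\nno$ required by Definition~\ref{D:Wall} together with a length-$1$ dangerous tail $\dddd\indiio\nnt\inv$, so the whole word matches the high-wall template, with $\dddd{\indiii_1}\nno$ playing the role of the wall's barrier (since $\indiii_1<\indi$ by the barrier condition on $\xx_1$). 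When $\ww_\hh=\varepsilon$, no basic operation is needed: $\xx_\hh=\aa\indiio\nno$ already gives $\indiii_\hh=\indiio$, so the tail $\dddd\indiio\nno\cdot J_\hh$ with $J_\hh=\dddd{\findi\hho\minus1}\nnt\inv\,\dddd\indiio\nnt\inv$ fits the template; this yields a high wall when $\hh\ge2$ and a low wall of type $\indi\minus1<\indi$ when $\hh=1$ (in which case $\findi\hho=\indi$).

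The remaining checks are routine. The middle block is $\sig\nnt$-nonnegative because its positive factors come from $\BKL\nno$ and carry no $\sig\nno$, while every negative factor inside the various $J_\kk$ has the form $\dddd{..}\nnt\inv$ or $\dddd{..}{\findi\kk\minus1}\inv$, whose $\sig$-indices fall short of $\nnt$. For the length bound, rule~\eqref{E:Reversing:i} preserves length; each of the first $\hho$ applications of rule~\eqref{E:Reversing:ii} adds $2$ letters (one barrier letter is replaced by three), the last application adds only $1$ (because $\dddd\nno\nno\inv=\varepsilon$), and the optional basic operation contributes one more letter. Summing these increments case by case yields the claimed inequality $\len\www\le\len\ww+2(\hho)+2\len{\ww_\hh}+\len{\dang\www}$, together with $\len{\floor\www}=\len{\ww_0}$ and $\len{\dang\www}\le 2$. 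The total number of reversing steps equals the number of letters of $\ww$ actually processed, hence at most $\ll$, and the basic operation (when applied) is the single additional elementary operation. The most delicate point to monitor is that each barrier-crossing cascade leaves behind a negative factor of exactly the shape $\dddd{\findi\kk}\nno\inv$ needed to traverse the next barrier-free segment $\ww_\kk$, which is precisely ensured by the strict increase $\findi0<\findi1<\cdots<\findi\hh=\nno$ built into the definition of ladder.
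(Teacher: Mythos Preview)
Your proof is correct and follows essentially the same strategy as the paper: reverse the single factor $\dddd\indi\nno\inv$ through the ladder, using rule~\eqref{E:Reversing:i} on the barrier-free segments~$\ww_\kk$ and rule~\eqref{E:Reversing:ii} at each bar~$\xx_\kk$; then exploit $\findi\hh=\nno$ to make the final trailing $d^{-1}$ vanish, apply the extra splitting $\aa\indiio\nno\equiv\dddd\indiio\nno\,\dddd\indiio\nnt\inv$ when $\ww_\hh\ne\varepsilon$, and read off the wall decomposition in the three cases $(\ww_\hh=\varepsilon,\hh\ge2)$, $(\ww_\hh=\varepsilon,\hh=1)$, $(\ww_\hh\ne\varepsilon)$. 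The paper organizes the same computation via the blocks $\uu_\kk\,\yy_{\kk+1}\,\vv_{\kk+1}$ and carries out the length bookkeeping case by case, but the content is identical.
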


\begin{figure}[htb]
\begin{picture}(118,20)
\put(8,3){\includegraphics{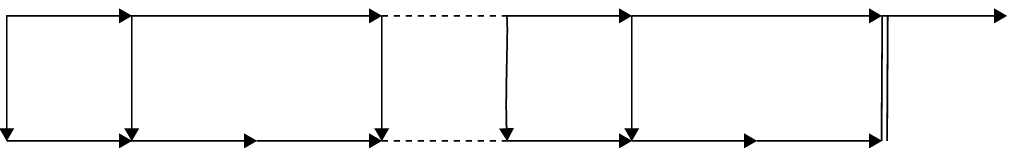}}
{\footnotesize
\put(0,10){$\dddd\indi{\nn-1}$}
\put(14,17.5){$\ww_0$}
\put(14,1.5){$\uu_0$}
\put(26.5,1.5){$\yy_1$}
\put(32,17.5){$\xx_1$}
\put(39,1.5){$\vv_1$}
\put(63,17.5){$\ww_{\hh-1}$}
\put(63,1.5){$\uu_{\hh-1}$}
\put(77,1.5){$\yy_\hh$}
\put(83,17.5){$\xx_\hh$}
\put(90,1.5){$\vv_\hh$}
\put(99,10){$\varepsilon$}
\put(102,17.5){$\ww_\hh$}}
\end{picture}
\caption{{\sf \smaller Reversing $\dddd\indi\nno\inv\,\ww$ into  a wall when
$\ww$ is a ladder (proof of Lemma~\ref{L:DangerousAgainstLadder}).}}
\label{F:DangerousAgainstLadder}
\end{figure}

\begin{proof}
The main idea is illustrated in Figure~\ref{F:DangerousAgainstLadder}: starting with
$\dddd\indi\nno\inv\,\ww$, \ie, with $\dddd\indi\nno\inv\,\ww_0\, \xx_1\,
\Ldots\, \xx_\hh \, \ww_\hh$, we reverse the diagram by pushing the vertical
(negative) $d$-arrows to the right until a wall is obtained. The success at each
elementary step is guaranteed by Lemma~\ref{L:Reversing:Equivalence}. In
general we obtain a high wall. A few particular cases have to be considered
separately, namely when $\ww_\hh$ is empty, in which case we obtain a low wall if
the height~$\hh$ is~$1$.

We start with a description of elementary blocks of the diagram of
Figure~\ref{F:DangerousAgainstLadder}. Write
$\xx_\kk=\aa{\find\kk}{\findi\kk}$ for $\kk=1,\Ldots,\hh$, and put
$\findi0=\nobreak\indi$. Fix $\kk$ with $0\le\kk\le\hho$. Put
$\uu_\kk=\revi{\findi\kk}(\ww_\kk)$, $\yy_\kkp=\dddd{\find\kkp}\nno$
and $\vv_\kkp=\revii{\findi\kk}(\xx_\kkp)$. Then, we have
$\dddd{\findi\kk}\nno\inv\,\ww_\kk\,\xx_\kkp\revv\uu_\kk\,\yy_\kkp\,\vv_\kkp\,\dddd{\findi\kkp}\nno\inv$,
corresponding to the diagram

\begin{figure}[htb]
\begin{picture}(68,20)
\put(14,3){\includegraphics{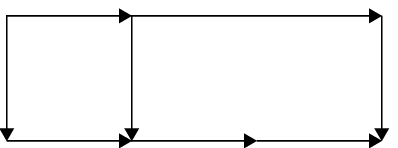}}
{\footnotesize
\put(2.5,10){$\dddd{\findi\kk}{\nn-1}$}
\put(19,17.5){$\ww_\kk$}
\put(19,1.5){$\uu_\kk$}
\put(28.5,10){$\dddd{\findi\kk}{\nn-1}$}
\put(32,1.5){$\yy_{\kk+1}$}
\put(37,17.5){$\xx_{\kk+1}$}
\put(44,1.5){$\vv_{\kk+1}$}
\put(54,10){$\dddd{\findi{\kk+1}}\nno$}}
\end{picture}
\end{figure}

Gathering the reversing diagrams corresponding to the successive values of the
parameter~$\kk$, we precisely obtain the diagram of 
Figure~\ref{F:DangerousAgainstLadder}. Put
$\www_\kk=\uu_\kk\,\yy_\kkp\,\vv_\kkp$ for $0\le\kk\le\hho$.

At this point, we have to consider three slightly different cases.
Assume first $\ww_\hh=\varepsilon$ and $\hh\ge2$, the easiest case,
of which the other two cases will be derived.

Put $\www = \www_0\,...\,\www_\hho$.
By construction, we have $\dddd\indi\nno\inv\,\ww\revv\www$.
Hence, by Lemma~\ref{L:Reversing:Equivalence},  $\dddd\indi\nno\inv\,\ww$ is equivalent to $\www$. We shall now prove that
$\www$ is a wall of the expected type, and that the complexity statements are
satisfied.

As $\ww_\hh$ is empty, the last letter of~$\ww$ is $\xx_\hh$. 
This implies $\xx_\hh=\aa\indiio\nno$, hence $\yy_\hh=\dddd\indiio\nno$.
Put $\wwww=\vv_1\,\www_2\,...\,\www_\hht\,\uu_\hho$.
By construction, we have 
\[
\www=\uu_0\cdot\dddd{\find1}\nno\cdot\wwww\cdot\dddd\indiio\nno\cdot\vv_\hh.
\]
We shall now check that $\www$ is a high $\aa\indi\nn$-wall lent on~$\aa\indiio\nno$.
As the image of an $a$-letter under~$\revi\indi$ is an $a$-letter, the word~$\uu_0$ is a positive~$a$-word whose length is~$\len{\ww_0}$.
Hence \eqref{D:Wall:i} and \eqref{L:DAL:i} are satisfied.

Next, by definition of a ladder, the letter~$\xx_1$ is an $\aa\indi\nn$-barrier, hence $\find1<\indi$ holds, \ie, \eqref{D:Wall:ii} is satisfied.

As the words $\uu_\kk$, $\yy_\kkp$, $\vv_\kkp$ are $\sig\nnt$-nonnegative, the word~$\wwww$ is also 
$\sig\nnt$-nonnegative. So \eqref{D:Wall:iii} holds.

Now, we recall that $\vv_\hh$ is equal to~$\dddd{\findi\hho-1}\nnt\inv\,\dddd{\find\hh}\nnt\inv$ with $\find\hh=\indiio$. 
By definition of a ladder, the letter~$\xx_\hh$ is an $\aa{\findi\hho}\nn$-barrier.
Therefore, we have $\indiio<\findi\hho$, which implies $\findi\hho\minus1\ge\indiio$.
Hence $\vv_\hh$ is $\aa\indiio\nno$-dangerous of length~$2$. So \eqref{D:Wall:iv} and \eqref{L:DAL:ii} are satisfied.

Finally, for \eqref{L:DAL:iii}, we compute 
\[\len{\www_\kk}=\len{\uu_\kko}\plus\len{\yy_\kk}\plus\len{\vv_\kk}=\len{\ww_\kko}\plus1\plus2=\len{\ww_\kko\,\xx_\kk}\plus2.\]
Then, as $\ww_\hh$ is empty, we obtain
\[\len{\www}=\sum_{\kk=0}^\hho\len{\www_\kk}=\sum_{\kk=0}^\hho\len{\ww_\kk\,\xx_\kkp}\plus2\hh=\len{\ww}\plus2\hh.\]
As in this case $\ww_\hh$ is empty and the length of~$\dang\www$ is~$2$, \ie, the length of~$\vv_\hh$ is~$2$, Condition \eqref{L:DAL:iii} holds.
So the case of~$\ww_\hh$ empty with $\hh \ge 2$ is completed, except for the time
complexity analysis.

Assume now $\ww_\hh=\varepsilon$ and $\hh=1$. 
Then $\www$ is equal to~$\uu_0\cdot\dddd\indiio\nno\cdot\vv_1$.
As in the previous case, the word~$\uu_0$ is a positive~$a$-word of length $\ww_0$ and we have $\len{\www}=\len{\ww}\plus2$.
The word~$\vv_1$ is equal to~$\dddd\indio\nnt\inv\,\dddd\indiio\nnt\inv$, which is $\aa\indiio\nno$-dangerous of type $\indio$ and has length $2$.
Therefore, $\www$ is a low $\aa\indi\nn$-wall lent on~$\aa\indiio\nno$ satisfying \eqref{L:DAL:i}, \eqref{L:DAL:ii} and~\eqref{L:DAL:iii}.

Assume finally $\ww_\hh\not=\varepsilon$. 
Then, we decompose $\ww_\hh$ as $\wwww_\hh\,\aa\indiio\nno$.
Put \[\www=\www_0\,...\,\www_\hho\,\wwww_\hh\,\dddd\indiio\nno\,\dddd\indiio\nnt\inv,\]
and $\wwww=\vv_1\,\www_2\,...\,\www_\hho\,\wwww_\hh$.
We have $\www=\uu_0\cdot\,\dddd{\findi1}\nno\cdot\,\wwww\cdot\,\dddd\indiio\nno\cdot\,\dddd\indiio\nnt\inv$.
Then \eqref{D:Wall:i}, \eqref{D:Wall:ii}, \eqref{D:Wall:iii} are checked as in the case $\ww_\hh = \varepsilon$.
By construction, the word~$\dddd\indiio\nnt\inv$ is $\aa\indiio\nno$-dangerous of length $1$.
So \eqref{D:Wall:iv} and \eqref{L:DAL:ii} are satisfied.
Then, by definition of $\www$,  \eqref{L:DAL:iv} holds.
We check now
\eqref{L:DAL:iii}. Starting form
$\len{\www_\kk}=\len{\ww_\kko\,\xx_\kk}\plus2$, we obtain
\[\len{\www}=\sum_{\kk=0}^\hho\len{\www_\kk}\plus\len{\wwww_\hh}\plus2=\sum_{\kk=0}^\hho\len{\ww_\kk\,\xx_\kkp}\plus\len{\ww_\hh}\plus2\hh\plus1=\len{\ww}\plus2\hh\plus1.\]
As $\ww_\hh$ is not empty, we have $\len{\ww_\hh}\ge1$, hence $2\len{\ww_\hh}\ge2$.
Moreover, in this case, the length of~$\dang\www$ is $1$.
Therefore, we get $3\le2\len{\ww_\hh}\plus\len{\dang\www}$, and eventually find
\[ \len{\www}\le\len{\ww}\plus2(\hho)\plus2\len{\ww_\hh}\plus\len{\dang\www}.\]

So, all cases have been considered. It only remains to consider the time complexity.
In the first and second cases, at most $\len{\ww}$ reversing operations are needed.
In the last case---$\ww_\hh \not= \varepsilon$---at most $\len{\ww}$ reversing operations are needed, plus the decomposition of~$\last{\ww}_\hh$ into two $d$-letters.
\end{proof}

\begin{exam}
We saw in Example~\ref{X:ExistLadder} that the word $\ww=\aa34\,\aa13\,\aa13\,\aa24\,\aa34$ is an $\aa25$-ladder lent on $\aa34$.
Let us compute the $\aa25$-wall lent on $\aa34$ that is equivalent to $\dddd25\inv\,\ww$.
Applying the reversing algorithm to $\dddd24\inv\,\ww$ gives
\[\wwww=\aa23\,\dddd14\,\dddd13\inv\,\dddd12\inv\,\aa14\,\dddd24\,\dddd23\inv\,\dddd23\inv\,\aa34\quad\text{(see Figure~\ref{F:X:L:DangerousAgainstLadder})}.\]
The word $\wwww$ is not a wall because its last letter does not have the correct form. However,
if we replace the last letter $\aa34$ of $\wwww$ by $\dddd34$ we obtain the high wall
\[\www=\aa23\,\cdot\,\dddd14\,\cdot\,\dddd13\inv\,\dddd12\inv\,\aa14\,\dddd24\,\dddd23\inv\,\dddd23\inv\,\cdot\,\dddd34\,\cdot\,\varepsilon.\]
The word~$\floor{\www}$ of $\www$ is $\aa23$, whereas $\dang{\www}$ is
empty.
\end{exam}

\begin{figure}[htb]
\begin{picture}(120,20)
\put(5,3){\includegraphics{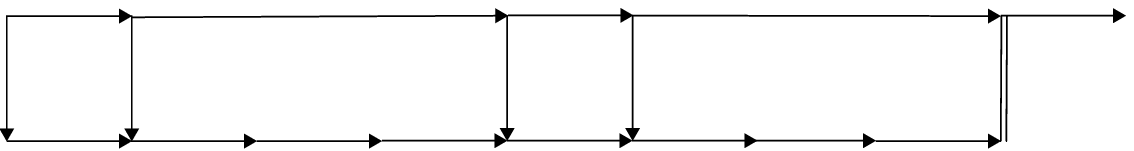}}
\footnotesize
\put(10,18){$\aa34$}
\put(35,18){$\aa13$}
\put(60,18){$\aa13$}
\put(85,18){$\aa24$}
\put(110,18){$\aa34$}

\put(0,10){$\dddd24$}
\put(13,10){$\dddd24$}
\put(51,10){$\dddd34$}
\put(64,10){$\dddd34$}
\put(104,10){$\varepsilon$}

\put(10,1){$\aa23$}
\put(23,1){$\dddd14$}
\put(35,1){$\dddd13\inv$}
\put(47,1){$\dddd12\inv$}
\put(60,1){$\aa14$}
\put(74,1){$\dddd24$}
\put(85,1){$\dddd23\inv$}
\put(97,1){$\dddd23\inv$}
\end{picture}
\caption{{\sf \smaller Reversing diagram of the
word~$\dddd24\inv\,\aa34\,\aa13\,\aa13\,\aa24\,\aa34$.}}
\label{F:X:L:DangerousAgainstLadder}
\end{figure}

%
%

\subsection{Dangerous against wall}

In the previous section, we studied the action of the reversing algorithm running on a word~$\uu\,\ww$ in the special case when $\uu$ is an~$\aa\indi\nn$-dangerous word of length $1$ and $\ww$ is an $\aa\indi\nn$-ladder.
We proved that the output word is an $\aa\indi\nn$-wall.
Before turning to the general case of an initial dangerous word with an arbitrary
length---that will be done in the next section---we consider here the case of an
$\aa\indi\nn$-dangerous word  of length $1$ followed by an $\aa\indi\nn$-wall.
The result is that the output word is again an $\aa\indi\nn$-wall. This shows
that, contrary to the family of ladders, the family of walls enjoys good closure
properties that will make inductive arguments possible.

We start with a technical result that will be used twice in
the proof of Lemma~\ref{L:DangerousAgainstWall}.

\begin{lemm}
\label{L:For:L:DangerousAgainstWall}
Assume $\nn\ge3$, that $\ww$ is a positive $a$-word containing an
$\aa\indi\nn$-barrier and that $\indiii<\indi$ holds. Then
the word $\dddd\indi\nno\inv\,\ww\,\dddd\indiii\nno$ reverses to
the
$ad$-word 
\[\uu\,\dddd\indv\nno\,\uuu\,\dddd\indiii\nno\,\dddd\indivo\nnt\inv,\]
which is obtained in at most $\len{\ww}+1$ steps and satisfies
\begin{conditions}
\label{L:FLDAW:i}
\cond{$\indv<\indi$ and $\indiii<\indiv$,}\\
\label{L:FLDAW:ii}
\cond{$\uu$ is a positive~$a$-word with $\len{\uu}<\len{\ww}$,}\\
\label{L:FLDAW:iii}
\cond{$\uuu$ is a $\sig\nnt$-nonnegative $ad$-word,}\\
\label{L:FLDAW:iv}
\cond{$\len{\uu\,\dddd\indv\nno\,\uuu\,\dddd\indiii\nno\,\dddd\indivo\nnt\inv}\le\len{\ww\,\dddd\indiii\nno}\plus2\len{\ww}\minus2\len{\uu}\plus1$.}
\end{conditions}
\end{lemm}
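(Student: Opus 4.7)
The plan is to run the reversing algorithm on $\dddd\indi\nno\inv\,\ww\,\dddd\indiii\nno$ and identify its output with the claimed form. Since $\ww$ contains an $\aa\indi\nn$-barrier by hypothesis, decompose $\ww=\ww_0\,\xx_1\,\ww_1$, where $\ww_0$ is the maximal prefix of $\ww$ containing no such barrier and $\xx_1=\aa\jj\kk$ is the leftmost $\aa\indi\nn$-barrier, so $\jj<\indi<\kk\le\nno$. Applying rule~\eqref{E:Reversing:i} successively $\len{\ww_0}$ times pushes $\dddd\indi\nno\inv$ through $\ww_0$, producing $\uu\,\dddd\indi\nno\inv\,\xx_1\,\ww_1\,\dddd\indiii\nno$ with $\uu:=\revi\indi(\ww_0)$ a positive $a$-word of length $\len{\ww_0}<\len\ww$, giving~\eqref{L:FLDAW:ii}. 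A single application of rule~\eqref{E:Reversing:ii} then transforms $\dddd\indi\nno\inv\,\xx_1$ into $\dddd\jj\nno\,\dddd\indio\nnt\inv\,\dddd\jj\kko\inv\,\dddd\kk\nno\inv$, and I set $\indv:=\jj$, yielding $\indv<\indi$.

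Next, I iterate the reversing on the sole remaining $\dddd{..}\nno\inv$ letter, which is $\dddd\kk\nno\inv$ (trivial if $\kk=\nno$), pushing it rightward through $\ww_1\,\dddd\indiii\nno$. Rule~\eqref{E:Reversing:i} handles each non-barrier letter of $\ww_1$ without changing length; each application of rule~\eqref{E:Reversing:ii} to an encountered barrier adds at most $2$ letters and spawns a new floating $\dddd{..}\nno\inv$ whose subscript strictly exceeds the old one. The process terminates in one of two ways: either a rule~\eqref{E:Reversing:ii} application produces the trivial $\dddd\nno\nno\inv=\varepsilon$, in which case I set $\indiv:=\nno$ so $\dddd\indivo\nnt\inv=\varepsilon$; or the floating $\dddd{..}\nno\inv$ eventually meets $\dddd\indiii\nno$ and rule~\eqref{E:Reversing:iii} absorbs it, producing $\dddd\indiii\nno\,\dddd\indivo\nnt\inv$, where $\indiv$ is the subscript of that last floating letter. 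Since every floating subscript strictly exceeds $\indi>\indiii$, in both cases $\indiii<\indiv$, completing~\eqref{L:FLDAW:i}. Throughout, the positive letter $\dddd\indv\nno$ is never reduced because all $\dddd{..}\nno\inv$ letters remain to its right at every step, so the output has the form $\uu\,\dddd\indv\nno\,\uuu\,\dddd\indiii\nno\,\dddd\indivo\nnt\inv$.

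To verify~\eqref{L:FLDAW:iii}, every letter in $\uuu$ is of one of three types: a positive $a$-letter in $\WBKL\nno$ (as $\revi\indi$ stabilizes this set), a positive $\dddd{..}\nno$ letter produced by rule~\eqref{E:Reversing:ii}, or a negative $d$-letter with upper index at most $\nnt$ (appearing as one of the $\dddd{..}{\kk''}\inv$ factors in a rule~\eqref{E:Reversing:ii} output). Translating each letter type to the $\sigg$-alphabet shows that none contain $\sig\nnt\inv$ or $\sigpm\nno$, so $\uuu$ is $\sig\nnt$-nonnegative. For~\eqref{L:FLDAW:iv}, rules~\eqref{E:Reversing:i} and~\eqref{E:Reversing:iii} preserve length, while rule~\eqref{E:Reversing:ii} adds at most $2$ letters per use and is applied at most once on $\xx_1$ and at most once per letter of $\ww_1$, for a total of at most $\len{\ww_1}+1=\len\ww-\len\uu$ applications; hence the output length is at most $\len{\ww\,\dddd\indiii\nno}+2(\len\ww-\len\uu)$, within the bound of~\eqref{L:FLDAW:iv}. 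Each reversing step consumes one distinct letter of the input, so the algorithm terminates in at most $\len\ww+1$ steps. The main subtlety is confirming that no $\dddd{..}\nno\inv$ letter remains trapped in the interior of the output, which follows from the deterministic reversing algorithm always reducing the rightmost eligible subword until none such remain in the middle.
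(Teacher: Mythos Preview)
Your proof is correct and follows essentially the same approach as the paper: decompose $\ww$ as a maximal barrier-free prefix, the first $\aa\indi\nn$-barrier, and a remainder, then reverse through each piece in turn using rules \eqref{E:Reversing:i}, \eqref{E:Reversing:ii}, and finally \eqref{E:Reversing:iii}. Where the paper simply asserts that $\dddd{t'}\nno\inv\,\vv$ reverses to some $\vvv\,\dddd\indiv\nno\inv$ with $\len{\vvv}\le 3\len\vv$ and $\indiv\ge t'$, you spell out the mechanism (tracking a single floating $\dddd{..}\nno\inv$ whose subscript never decreases); this is a welcome clarification and leads to the same conclusions.

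One small arithmetic slip: your length accounting starts from the input $\dddd\indi\nno\inv\,\ww\,\dddd\indiii\nno$, which has length $\len\ww+2$, not $\len{\ww\,\dddd\indiii\nno}=\len\ww+1$. With at most $\len\ww-\len\uu$ applications of rule~\eqref{E:Reversing:ii} (each adding $2$), the correct bound is $\len\ww+2+2(\len\ww-\len\uu)=\len{\ww\,\dddd\indiii\nno}+2\len\ww-2\len\uu+1$, which is exactly \eqref{L:FLDAW:iv}; your stated bound was one less, but the conclusion is unaffected.
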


\begin{figure}[htb]
\begin{picture}(94,20)
\put(8,3){\includegraphics{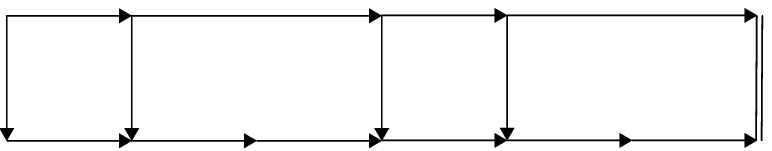}}
{\footnotesize
\put(0,10){$\dddd\indi{\nn-1}$}
\put(14,17.5){$\www$}
\put(11,1){$\revi\indi(\www)$}
\put(24,1){$\dddd\indv{\nn-1}$}
\put(32,18){$\aa\indv\indvi$}
\put(34.5,1){$\revii\indi(\aa\indv\indvi)$}
\put(37.5,10){$\dddd\indvi{\nn-1}$}
\put(52,18){$\vv$}
\put(52,1){$\vvv$}
\put(60.5,10){$\dddd\indiv{\nn-1}$}
\put(68,18){$\dddd\indiii{\nn-1}$}
\put(62,1){$\dddd\indiii{\nn-1}$}
\put(77,1){$\reviii\indiv$}
\put(86.5,10){$\varepsilon$}}
\end{picture}
\caption{{\sf \smaller Reversing $\dddd\indi\nno\inv\,\ww$ into a wall in the case when $\ww$ is a
wall (proof of Lemma~\ref{L:For:L:DangerousAgainstWall}).}}
\label{F:DangerousAgainstWall}
\end{figure}

\begin{proof}
Write $\ww$ as $\www\,\aa\indv\indvi\,\vv$ where $\www$ is the maximal
prefix of~$\ww$ that contains no $\aa\indi\nn$-barrier, and with
$\aa\indv\indvi$ an $\aa\indi\nn$-barrier.
The argument is illustrated in Figure~\ref{F:DangerousAgainstWall}: starting with
$\dddd\indi\nno\inv\,\ww\,\dddd\indiii\nno$, we reverse the diagram by pushing
the vertical (negative) $d$-arrows to the right until a wall is obtained. The
success at each elementary step is guaranteed by
Lemma~\ref{L:Reversing:Equivalence}.

As $\www$ does not contains any $\aa\indi\nn$-barrier, we have $\dddd\indi\nno\inv\,\www\revv\revi\indi(\www)\,\dddd\indi\nno\inv$.
By construction $\aa\indv\indvi$ is an $\aa\indi\nn$-barrier, \ie, $\indv<\indi<\indvi$ holds.
We deduce
$$\dddd\indi\nno\inv\,\aa\indv\indvi\revv\dddd\indv\nno\revii\indi
(\aa\indv\indvi)\,\dddd\indvi\nno\inv.$$
By definition of elementary
reversing steps, we obtain
$\dddd\indvi\nno\inv\,\vv\revv\vvv\,\dddd\indiv\nno\inv$ for some
$ad$-word~$\vvv$ with $\len{\vvv}\le3\len{\vv}$ and some $\indiv\ge\indv'$. The
hypothesis $\indiii<\indi$ together with $\indi<\indvi$ and $\indvi\le\indiv$
implies $\indiii<\indiv$. Hence
$\dddd\indiv\nno\inv\,\dddd\indiii\nno\revv\dddd\indiii\nno\,\reviii\indiv$
holds.

Write $\uu=\revi\indi(\www)$ and $\uuu=\revii\indi(\aa\indv\indvi)\,\vvv$.
By construction, we have 
$$\dddd\indi\nno\inv\,\ww\,\dddd\indiii\nn\revv
\uu\,\dddd\indv\nno\,\uuu\,\dddd\indiii\nno\,\dddd\indivo\nno\inv,$$
and we claim that the latter word has the expected properties.

Condition \eqref{L:FLDAW:i} is an immediate consequence of the above results.

As the image of an $a$-letter under $\revi\indi$ is an $a$-letter, the word~$\uu$ is a positive~$a$-word of length $\len{\www}$.
By definition, the word~$\www$ is a proper prefix of~$\ww$.  
Then $\len{\www}<\len{\ww}$ holds, \ie, \eqref{L:FLDAW:ii} is satisfied.

By definition of elementary reversing steps, the image of a positive~$a$-word under~$\revi{}$ and $\revii{}$ is $\sig\nnt$-nonnegative, hence the word~$\vvv$ is $\sig\nnt$-nonnegative.
As $\revii\indi(\aa\indv\indvi)$ is $\sig\nnt$-nonnegative, the word $\uuu$ is
$\sig\nnt$-nonnegative, \ie, \eqref{L:FLDAW:iii} holds.

For \eqref{L:FLDAW:iv}, we compute 
\[\len{\uu\,\dddd\indv\nno\,\uuu\,\dddd\indiii\nno\,\dddd\indivo\nnt\inv}=\len{\www}\plus\len{\uuu}\plus3=\len{\www}\plus\len{\vvv}\plus5.\]
By construction of~$\vvv$, we have $\len{\vvv}\le3\len{\vv}$.
Using $\len{\ww}=\len{\www}\plus1\plus\len{\vv}$, we deduce
\begin{align*}
\len{\uu\,\dddd\indv\nno\,\uuu\,\dddd\indiii\nno\,\dddd\indivo\nnt\inv}
&\le3\len{\ww}\minus2\len{\www}\plus2\\
&=\len{\ww\,\dddd\indi\nno}\plus2(\len{\ww}\minus\len{\www})\plus1,
\end{align*}
which is the expected inequality since $\len{\www}$ is equal
to~$\len{\uu}$ by~\eqref{L:FLDAW:ii}.

An easy bookkeeping argument gives the bound on the number of steps in the
revering process.
\end{proof}

We are now able to establish the main result of this section.

\begin{lemm}
\label{L:DangerousAgainstWall}
Assume that $\ww$ is an $\aa\indi\nn$-wall lent on~$\aa\indiio\nno$.
Then $\dddd\indi\nno\inv\ww$ reverses in at most $\len{\floor{\ww}}+1$ steps to an $\aa\indi\nn$-wall
$\www$ satisfying
\begin{conditions}
\label{L:DAW:i}
\cond{$\len{\floor\www}\le\len{\floor\ww}$,}\\
\label{L:DAW:ii}
\cond{$\len{\dang\www}\le\len{\dang\ww}\plus1$,}\\
\label{L:DAW:iii}
\cond{$\len{\www}\le\len{\ww}\plus2\len{\floor\ww}\minus2\len{\floor\www}+1$,}\\
\label{L:DAW:iv}
\cond{$\www$ is a high wall whenever $\ww$ is a high wall.}
\end{conditions}
\end{lemm}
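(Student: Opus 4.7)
The plan is to argue by case analysis: whether $\ww$ is a high or low wall, and whether $\floor\ww$ contains an $\aa\indi\nn$-barrier. The barrier-free subcases reduce to direct applications of the reversing rules~\eqref{E:Reversing:i} and~\eqref{E:Reversing:iii}, while the subcases with a barrier invoke Lemma~\ref{L:For:L:DangerousAgainstWall} as the main tool.

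Suppose $\ww$ is a high wall with decomposition $\floor\ww\cdot\dddd\indiii\nno\cdot\wwww\cdot\dddd\indiio\nno\cdot\dang\ww$, where $\wwww$ is the $\sig\nnt$-nonnegative middle. If $\floor\ww$ contains no $\aa\indi\nn$-barrier, I reverse $\dddd\indi\nno\inv$ through $\floor\ww$ by $\len{\floor\ww}$ applications of~\eqref{E:Reversing:i}, then past $\dddd\indiii\nno$ by one application of~\eqref{E:Reversing:iii} (valid since $\indiii<\indi$), obtaining
\[
\revi\indi(\floor\ww)\cdot\dddd\indiii\nno\cdot\dddd\indio\nnt\inv\cdot\wwww\cdot\dddd\indiio\nno\cdot\dang\ww.
\]
The crucial observation is that $\dddd\indio\nnt\inv$ involves only $\sig$-letters of index at most $\nn-3$, so it contains no $\sigpm\jj$ with $\jj\ge\nnt$; hence prepending it to $\wwww$ preserves $\sig\nnt$-nonnegativity, and the result is a high $\aa\indi\nn$-wall lent on $\aa\indiio\nno$ with $\floor\www=\revi\indi(\floor\ww)$, $\dang\www=\dang\ww$, and $\len\www=\len\ww+1$, validating \eqref{L:DAW:i}--\eqref{L:DAW:iv}. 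If $\floor\ww$ does contain a barrier, I instead apply Lemma~\ref{L:For:L:DangerousAgainstWall} to $\dddd\indi\nno\inv\,\floor\ww\,\dddd\indiii\nno$: the output has the prescribed shape ending with a $\dddd{..}\nnt\inv$ letter, and appending the untouched tail $\wwww\cdot\dddd\indiio\nno\cdot\dang\ww$ gives a high wall whose middle is a concatenation of $\sig\nnt$-nonnegative pieces --- the new $\dddd\indiii\nno$ being $\sig\nnt$-positive, the trailing $\dddd{..}\nnt\inv$ contributing no $\sigpm\nnt$, and the rest $\sig\nnt$-nonnegative by hypothesis or by~\eqref{L:FLDAW:iii}. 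Conditions~\eqref{L:DAW:i}--\eqref{L:DAW:iv} then follow by combining~\eqref{L:FLDAW:i}--\eqref{L:FLDAW:iv} with direct length arithmetic.

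If $\ww$ is instead a low wall, $\floor\ww\cdot\dddd\indiio\nno\cdot\dang\ww$, then in the barrier-free subcase the analogous two-step reversing produces $\revi\indi(\floor\ww)\cdot\dddd\indiio\nno\cdot\dddd\indio\nnt\inv\cdot\dang\ww$, a low $\aa\indi\nn$-wall whose dangerous part $\dddd\indio\nnt\inv\,\dang\ww$ has type $\indio<\indi$; each length grows by exactly one, matching~\eqref{L:DAW:ii} and~\eqref{L:DAW:iii}. In the remaining subcase, Lemma~\ref{L:For:L:DangerousAgainstWall} applied to $\dddd\indi\nno\inv\,\floor\ww\,\dddd\indiio\nno$ produces the prescribed output whose trailing $\dddd{..}\nnt\inv$ letter absorbs into the dangerous part when $\dang\ww$ is appended, yielding a high wall (since the reversing introduces a fresh interior $\dddd{..}\nno$-letter of index strictly less than $\indi$). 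Here~\eqref{L:DAW:iv} is vacuous since $\ww$ was low, and the remaining bounds follow exactly as in the high-wall case.

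The main obstacle will be the careful bookkeeping of $\sig\nnt$-nonnegativity as the reversing process introduces fresh $\dddd{..}\nnt\inv$ letters: each such letter has top $\sig$-index exactly $\nn-3$, one below the critical threshold $\nnt=\nn-2$, and must be verified to land inside the $\sig\nnt$-nonnegative middle of the new wall without perturbing the framing $\dddd{..}\nno$-letters. Once this is secured, the bounds~\eqref{L:DAW:i}--\eqref{L:DAW:iv} reduce to routine arithmetic, and the step-count $\len{\floor\ww}+1$ is immediate from Lemma~\ref{L:For:L:DangerousAgainstWall}.
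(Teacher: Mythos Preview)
Your proposal is correct and follows essentially the same approach as the paper's proof: the same four-way case split (high/low wall crossed with presence/absence of an $\aa\indi\nn$-barrier in $\floor\ww$), the same direct reversing via~\eqref{E:Reversing:i} and~\eqref{E:Reversing:iii} in the barrier-free subcases, and the same appeal to Lemma~\ref{L:For:L:DangerousAgainstWall} in the barrier subcases. Your explicit remark that $\dddd\indio\nnt\inv$ has top $\sigma$-index $\nn-3$ and hence contributes no $\sigpm\nnt$ is the one point the paper leaves implicit; otherwise the arguments coincide.
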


\begin{proof}
Assume that $\ww$ is a low wall.
Then $\ww$ admits the decomposition~$\ww=\floor{\ww}\,\dddd\indiio\nno\,\dang{\ww}$.
By definition of a wall, we have $\indiio<\indi$.
First, assume in addition that $\floor\ww$ contains no $\aa\indi\nn$-barrier.
Then, the reversing process gives
\[
 \dddd\indi\nno\inv\,\ww\,\revv\,\revi\indi{(\floor{\ww})}\,\dddd\indi\nno\inv\,\dddd\indiio\nno\,\dang{\ww}\,\revv\,\revi\indi{(\floor{\ww})}\,\dddd\indiio\nno\,\dddd\indio\nnt\inv\,\dang{\ww}.
\]
Write $\www=\uu\cdot\dddd\indiio\nno\cdot\vv$ with $\uu=\revi\indi{(\floor{\ww})}$ and $\vv=\dddd\indio\nnt\inv\,\dang{\ww}$.
As the image of a positive $a$-letter under $\revi{}$ is a positive $a$-letter, the word $\uu$ is a positive $a$-word of length~$\len{\floor{\ww}}$.
Then, $\indiio<\indi$ implies that $\www$ is a low $\aa\indi\nn$-wall lent on $\aa\indiio\nno$ satisfying \eqref{L:DAW:i} and  \eqref{L:DAW:ii} hold---$\dang{\www}=\vv$.
Condition \eqref{L:DAW:iii} is a direct consequence of the construction of
$\www$ together with $\len{\vv}=\len{\dang{\ww}}+1$.

Next, assume in addition that $\floor\ww$ contains an $\aa\indi\nn$-barrier.
By Lemma~\ref{L:For:L:DangerousAgainstWall} applied to $\floor{\ww}\,\dddd\indiio\nno$, there exist two words $\uu$ and $\uuu$, and two integers $\indiv$ and $\indv$ satisfying
\[
\dddd\indi\nno\inv\,\ww\,\revv\,\uu\,\dddd\indv\nno\,\uuu\,
\dddd\indiio\nno\,\dddd\indivo\nnt\inv\,\dang\ww.
\]
Write $\www=\uu\cdot\dddd\indv\nno\cdot\uuu\cdot\dddd\indiio\nno\cdot\vv$, with $\vv=\dddd\indivo\nnt\inv\,\dang\ww$.
Condition \eqref{L:FLDAW:i} implies that $\vv$ is an
$\aa\indiio\nno$-dangerous word of length at most~$\len{\dang{\ww}}+1$,
and that $\indv<\indi$ holds. Then, \eqref{L:FLDAW:ii} and
\eqref{L:FLDAW:iii} imply that $\ww$ is a high $\aa\indi\nn$-wall lent on
$\aa\indiio\nno$ and it satisfies \eqref{L:DAW:i} and \eqref{L:DAW:ii}.
Using
\eqref{L:FLDAW:iv}, we compute 
\[
 \len{\www}=\len{\floor{\ww}\,\dddd\indiio\nno}+2\len{\floor{\ww}}-2\len{\uu}+\len{\vv},
\]
which implies \eqref{L:DAW:iii} since we have $\floor{\www}=\uu$ and
$\dang{\www}=\dddd\indivo\nnt\inv\,\dang\ww=\vv$.

Assume now that $\ww$ is a high wall.
Then $\ww$ admits the decomposition 
\[\ww=\floor\ww\,\dddd\indiii\nno\,\wwww\,\dddd\indiio\nno\,\dang{\ww},\]
 with $\indiii<\indi$.
First, assume that $\floor\ww$ contain no $\aa\indi\nn$-barrier.
Then, reversing process gives
\[
 \dddd\indi\nno\,\ww\revv\revi\indi(\floor{\ww})\,\dddd\indiii\nno\,\dddd\indio\nnt\inv\,\wwww\,\dddd\indiio\nno\,\dang{\ww}.
\]
Write $\www=\revi\indi(\floor{\ww})\cdot\dddd\indiii\nno\cdot\dddd\indio\nnt\inv\,\wwww\cdot\dddd\indiio\nno\cdot\dang{\ww}$.
A direct verification, based on the fact that $\ww$ is an high $\aa\indi\nn$-wall lent on $\aa\indiio\nno$, gives that $\www$ is an high $\aa\indi\nn$-wall lent on $\aa\indiio\nno$ satisfying \eqref{L:DAW:i}, \eqref{L:DAW:ii} and~\eqref{L:DAW:iv}.
For \eqref{L:DAW:iii}, we compute $\len{\www}=\len{\ww}+1$.

Assume now that $\floor\ww$ contains an $\aa\indi\nn$-barrier.
Then, by Lemma~\ref{L:For:L:DangerousAgainstWall} applied to $\floor\ww\,\dddd\indiii\nno$, there exists two words
$\uu$, $\uuu$ and two integers $\indiv$, $\indv$ satisfying
\[
 \dddd\indi\nno\,\ww\revv\,\uu\,\dddd\indv\nno\,\uuu\,\dddd\indiiio\nno\,\dddd\indivo\nnt\inv\,\wwww\,\dddd\indiio\nno\,\dang{\ww}.
\]
Write $\www=\uu\cdot\dddd\indv\nno\cdot\uuu\,\dddd\indiiio\nno\,\dddd\indivo\nnt\inv\,\wwww\cdot\dddd\indiio\nno\cdot\dang{\ww}$.
Condition~\eqref{L:FLDAW:iii} implies that the word $\uuu\,\dddd\indiiio\nno\,\dddd\indivo\nnt\inv\,\wwww$ is $\sig\nnt$-nonnegative, 
and even $\sig\nnt$-positive. Hence, a direct verification, based on the fact
that $\ww$ is an high $\aa\indi\nn$-wall lent on $\aa\indiio\nno$, shows that
$\www$ is a high $\aa\indi\nn$-wall lent on $\aa\indiio\nno$ and it satisfies
\eqref{L:DAW:i}, \eqref{L:DAW:ii} and~\eqref{L:DAW:iv}. Using
\eqref{L:FLDAW:iv}, we compute
\[
 \len{\www}\le\len{\floor{\ww}\,\dddd\indiii\nno}+2\len{\floor{\ww}}-2\len{\uu}+1+\len{\wwww}+1+\len{\dang{\ww}},
\]
which implies \eqref{L:DAW:iii} since $\floor{\www}=\uu$.

As for the number of reversing steps, it follows from an easy bookkeeping argument using Lemma~\ref{L:For:L:DangerousAgainstWall}.
\end{proof}

\subsection{Dangerous against ladders: the general case}

In the previous section, we studied the action of the reversing algorithm running on a word~$\uu\,\ww$ in the special case when 
$\uu$ is $\aa\indi\nn$-dangerous of length $1$ and $\ww$ is an
$\aa\indi\nn$-ladder. We proved that the output word is an $\aa\indi\nn$-wall. The aim of this section is to describe the reversing
algorithm in the general  case, \ie, for a  dangerous word of arbitrary length.

\label{SS:DangerousAgainstLadder:GeneralCase}

\begin{prop}
\label{P:DangerousAgainstLadder}
Assume that $\ww$ is an $\aa\indi\nn$-ladder lent on~$\aa\indiio\nno$ and $\uu$ be an $\aa\indi\nn$-dangerous word, with $\nn \ge 3$.
Then $\uu\,\ww$ is equivalent to
an $\aa\indi\nn$-wall~$\www$ lent on~$\aa\indiio\nno$. It can be
computed using at most $\len{\uu}\,\len{\ww}$ reversing steps, plus one basic
operation, hence in time $O(\len{\uu}\len{\ww}+1)$, and it satisfies
\begin{conditions}
\label{P:DAL:i}
\cond{$\len{\dang\www}\le\len{\uu}\plus1$,}\\
\label{P:DAL:ii}
\cond{$\len{\www}\le3\len\ww\plus\len\uu\minus1$.}
\end{conditions}
Moreover, if $\ww$ is an $\aa\indi\nn$-ladder lent on~$\aa\nnt\nno$ but different from $\aa\nnt\nno$, then $\www$ admits the decomposition $\www=\wwww\,\dddd\nnt\nno$, where $\wwww$ is a $\sig\nnt$-positive word.
\end{prop}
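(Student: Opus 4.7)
The plan is to proceed by induction on $\dd=\len\uu$, iteratively combining Lemmas~\ref{L:DangerousAgainstLadder} and~\ref{L:DangerousAgainstWall}. For the base case $\dd=1$ we have $\uu=\dddd\indi\nno\inv$, and Lemma~\ref{L:DangerousAgainstLadder} directly produces an $\aa\indi\nn$-wall~$\www$; using the ladder identity $\len{\ww_0}+\hh+\len{\ww_\hh}\le\len\ww$ (immediate from $\ww=\ww_0\,\xx_1\,\ww_1\,\Ldots\,\xx_\hh\,\ww_\hh$) together with $\len{\dang\www}\le 2$ in~\eqref{L:DAL:iii} yields $\len\www\le 3\len\ww$, as required.

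For the inductive step, write $\uu$ as $\dddd{\findi\dd}\nno\inv\,\uuu$, where $\uuu$ is an $\aa\indi\nn$-dangerous word of length $\dd-1$. By the induction hypothesis, $\uuu\,\ww$ reverses to a wall $\wwww$ lent on $\aa\indiio\nno$. Since $\findi\dd\ge\indi$, the wall $\wwww$ is in particular an $\aa{\findi\dd}\nn$-wall, so Lemma~\ref{L:DangerousAgainstWall} applies to $\dddd{\findi\dd}\nno\inv\,\wwww$ and produces the desired wall~$\www$. The dangerous-part bound $\len{\dang\www}\le\len\uu+1$ follows by iterating the increment $\len{\dang{\www^{(\kk)}}}\le\len{\dang{\www^{(\kko)}}}+1$ of~\eqref{L:DAW:ii}, starting from $\len{\dang{\www^{(1)}}}\le 2$ given by~\eqref{L:DAL:ii}.

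For the sharp length bound $\len\www\le 3\len\ww+\len\uu-1$, I would unroll the induction and telescope. Writing $\www^{(1)},\Ldots,\www^{(\dd)}=\www$ for the successive iterates, condition~\eqref{L:DAW:iii} gives $\len{\www^{(\kk)}}-\len{\www^{(\kko)}}\le 2\len{\floor{\www^{(\kko)}}}-2\len{\floor{\www^{(\kk)}}}+1$, whose sum over $\kk=2,\Ldots,\dd$ telescopes to $2\len{\floor{\www^{(1)}}}-2\len{\floor\www}+(\dd-1)\le 2\len{\ww_0}+\dd-1$, using $\len{\floor{\www^{(1)}}}=\len{\ww_0}$ from~\eqref{L:DAL:i}. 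Adding the bound on $\len{\www^{(1)}}$ supplied by~\eqref{L:DAL:iii} and invoking $\len{\ww_0}+\hh+\len{\ww_\hh}\le\len\ww$ collapses everything to $3\len\ww+\dd-1$. The reversing step count is analogous: Lemma~\ref{L:DangerousAgainstLadder} contributes at most $\len\ww$ steps plus one basic operation, and each subsequent invocation of Lemma~\ref{L:DangerousAgainstWall} contributes at most $\len{\floor{\www^{(\kko)}}}+1\le\len\ww+1$ steps, yielding total complexity in $O(\len\uu\,\len\ww+1)$.

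For the moreover, assume $\ww$ is lent on $\aa\nnt\nno$ with $\ww\neq\aa\nnt\nno$, so $\indiio=\nnt$. Since no integer lies strictly between $\nnt$ and $\nno$, the letter $\aa\nnt\nno$ cannot serve as an $\aa{\findi\hho}\nn$-barrier, which forces $\ww_\hh\neq\varepsilon$ in the ladder decomposition. The corresponding case of Lemma~\ref{L:DangerousAgainstLadder} then produces $\www^{(1)}$ terminating with $\dddd\indiio\nno\,\dddd\indiio\nnt\inv=\dddd\nnt\nno$ (the trailing factor being $\varepsilon$ since $\dddd\nnt\nnt\inv=\varepsilon$), while the inserted block $\dddd{\findi1}\nno$ makes the prefix $\sig\nnt$-positive. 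An inductive check then shows that each application of Lemma~\ref{L:DangerousAgainstWall} preserves the empty dangerous part—so the last letter remains $\dddd\nnt\nno$—and keeps the prefix $\sig\nnt$-positive, since any newly inserted $\dddd\indv\nno$ on the left contains $\sig\nnt$ and no $\siginv\nnt$ is ever introduced. The main obstacle will be the length bookkeeping across the iteration, where the telescoping of $\len{\floor{\www^{(\kk)}}}$ is the decisive step.
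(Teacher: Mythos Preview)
Your approach---iterating Lemmas~\ref{L:DangerousAgainstLadder} and~\ref{L:DangerousAgainstWall} and telescoping the floor lengths---is exactly the paper's. The length and step-count bookkeeping is correct; in particular the telescoping of $\len{\floor{\www^{(\kk)}}}$ and the use of $\len{\ww_0}+\hh+\len{\ww_\hh}\le\len\ww$ match the paper's computation.

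There is one genuine omission: you never handle $\indi=\nno$. By convention the only $\aa{\nno}{\nn}$-dangerous word is the empty word, so there your induction on~$\dd\ge1$ does not start, yet this case is covered by the statement (and is actually used later, e.g.\ in Lemma~\ref{L:KeyLemma} when $\last{\ww}_\kk=\aa{\nnt}{\nno}$). The fix is trivial---write $\ww=\wwww\,\aa{\indiio}{\nno}$ and put $\www=\wwww\,\dddd{\indiio}{\nno}\,\dddd{\indiio}{\nnt}\inv$, which is an $\aa{\nno}{\nn}$-wall lent on~$\aa{\indiio}{\nno}$---but it must be stated, and the ``moreover'' for $\indi=\nno$ then follows from Lemma~\ref{L:LastLetter}$(iii)$, which forces $\last\wwww$ to have the form $\aa{..}{\nno}$ and hence $\wwww$ to be $\sig\nnt$-positive.

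For the ``moreover'' with $\indi\le\nnt$, your claim that each application of Lemma~\ref{L:DangerousAgainstWall} ``preserves the empty dangerous part'' is true, but Lemma~\ref{L:DangerousAgainstWall} as stated only gives $\len{\dang{\www^{(\kk)}}}\le\len{\dang{\www^{(\kko)}}}+1$, so you are implicitly reaching into its proof. The paper's route is cleaner: use~\eqref{L:DAL:iv} and~\eqref{L:DAW:iv} to conclude that every $\www^{(\kk)}$ is a \emph{high} wall lent on~$\aa{\nnt}{\nno}$; then, by definition, $\dang{\www^{(\kk)}}$ is $\aa{\nnt}{\nno}$-dangerous, hence empty by convention, and the high-wall decomposition directly yields $\www=\wwww\,\dddd{\nnt}{\nno}$ with $\wwww$ $\sig\nnt$-positive.
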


\begin{proof}
All ladders and walls in this proof are supposed to be lent
on~$\aa\indiio\nno$. We shall construct an $\aa\indi\nn$-wall~$\www$ that
is equivalent to~$\uu\,\ww$ by induction on the length of~$\uu$.

Assume first $\indi\le\nnt$.
Then $\uu$ is not empty.
Write $\uu$ as $\dddd{\findi\dd}\nno\inv\cdot...\cdot\dddd{\findi1}\nno\inv$.
Define $\wwl1$ to be the $\aa{\findi1}\nn$-wall of Lemma~\ref{L:DangerousAgainstLadder} that is equivalent to~$\dddd{\findi1}\nno\inv\,\ww$.
Starting from $\wwl1$, we inductively define $\wwl\kkp$ to be the
$ad$-word obtained by reversing~$\dddd{\findi\kkp}\nno\inv\,\wwl\kk$.

We claim that $\wwl\kk$ is an $\aa{\findi\kk}\nn$-wall.
Indeed, by definition of a wall, the relation
$\findi\kk\ge\findi\kko$ implies that $\wwl\kko$ is also an
$\aa{\findi\kk}\nn$-wall. Then Lemma~\ref{L:DangerousAgainstWall}
guarantees that  $\wwl\kk$ is an $\aa{\findi\kk}\nn$-wall.

By construction, we have $\uu\,\ww\equiv\wwl\dd$. We shall now prove that $\wwl\dd$ satisfies the
complexity statements.

Lemma~\ref{L:DangerousAgainstLadder} gives $\len{\dang{\wwl1}}\le2$.
For $1\le\kk\le\ddo$, $\eqref{L:DAW:ii}$ implies $ \len{\dang{\wwl\kkp}}\le\len{\dang{\wwl\kk}}\plus1$.
Hence, $\len{\dang{\wwl\dd}}\le\len{\uu}\plus1$ holds, \ie, \eqref{P:DAL:i} is satisfied.

Let $\ww_0\,\xx_1\,\Ldots\,\xx_\hh\,\ww_\hh$ be the decomposition of
the $\aa\indi\nn$-ladder~$\ww$. Then, by \eqref{L:DAW:iii},  we have for each $\kk\ge1$
\begin{equation}
\label{P:DAL:1}
\len{\wwl\kkp}\le\len{\wwl\kk}\plus2\len{\floor{\wwl\kk}}\minus2\len{\floor{\wwl\kkp}}\plus1.
\end{equation}
Gathering the various relations~\eqref{P:DAL:1} for $\kk=1,\Ldots,\ddo$, we obtain
\begin{align*}
\len{\wwl\dd}=\len{\wwl\dd}&\le\len{\wwl1}\plus2\len{\floor{\wwl1}}\minus2\len{\floor{\wwl\dd}}\plus\ddo\\
&\le\len{\wwl1}\plus2\len{\floor{\wwl1}}\plus\ddo
\end{align*}
By \eqref{L:DAL:i}, we have $\len{\floor{\wwl1}}=\len{\ww_0}$, hence
\[\len{\wwl\dd}\le\len{\wwl1}\plus2\len{\ww_0}\plus\ddo.\]
Condition~\eqref{L:DAL:iii} implies $\len{\wwl1}\le\len{\ww}\plus2(\hho)\plus2\len{\ww_\hh}\plus\len{\dang{\wwl1}}$.
Using the relation
$\len{\ww}\le\len{\ww_0}\plus\hh\plus\len{\ww_\hh}$, we obtain
\[\len{\wwl\dd}\le3\len{\ww}\plus\dd\plus\len{\dang{\wwl1}}\minus3.\]
By construction, $\dd$ is the length of $\uu$.
As \eqref{L:DAL:ii} implies $\len{\dang{\wwl1}}\le\nobreak2$, we find
\[\len{\wwl\dd}\le3\len\ww\plus\len\uu\minus1,\]
which completes the case $\pp \le \nnt$ writing $\www=\wwl\dd$.

Assume now $\indi=\nno$.
Then the word~$\uu$ is empty.
Put $\ww=\wwww\,\aa\indiio\nno$, and write $\www=\wwww\,\dddd\indiio\nno\,\dddd\indiio\nnt\inv$.
The word $\www$ is clearly an $\aa\nno\nn$-wall lent on~$\aa\indiio\nno$
and all complexities statements are satisfied. Moreover, for $\indii=\nno$ and
$\ww\not=\aa\nnt\nno$, Lemma~\ref{L:LastLetter}$(iii)$ implies that 
$\wwww$ ends with $\aa\indv\nno$ for some $\tt$, hence it is
$\sig\nnt$-positive. Then $\www$ has the expected properties.

Finally, assume $\indi\not=\nno$, $\indii=\nno$ and
$\ww\not=\aa\nnt\nno$. Then $\uu$ is not empty.
By hypothesis, the last letter of§~$\ww$ is~$\aa\nnt\nno$, which is not a
barrier. Hence the word $\ww_\hh$ is not empty and its last
letter~$\aa\nnt\nno$. Then \eqref{L:DAL:iv} implies that
the wall
$\wwl1$ is high. Hence,
\eqref{L:DAW:iv} implies that the wall $\wwl\kk$ is high for every
$\dd\ge\kk\ge1$, and, therefore, $\www$ is a high wall. By definition of a high
wall, $\www$ can be expressed as
$\uu\,\dddd\indiii\nno\,\wwh\,\dddd\nnt\nno$. By construction,
$\uu\,\dddd\indiii\nno\,\wwh$ is a $\sig\nnt$-positive word, so $\www$ has
all expected properties.

As for the time complexity upper bound, it follows from an easy
bookkeeping argument using Lemmas~\ref{L:DangerousAgainstLadder}
and~\ref{L:DangerousAgainstWall}, and the fact that the cost of one 
reversing step is $O(1)$.
\end{proof}

\begin{exam}
Let $\ww$ to be the $\aa37$-ladder $\aa46\,\aa14\,\aa26$ and $\uu$ to be 
the $\aa37$-dangerous word $\dddd56\inv\,\dddd36\inv\,\dddd36\inv$.
The reversing diagram of~$\uu\,\ww$ is displayed in Figure~\ref{F:X:DangerousAgainsLadder}.

\begin{figure}[htb]
\begin{picture}(144,41)
\put(0,2){\includegraphics{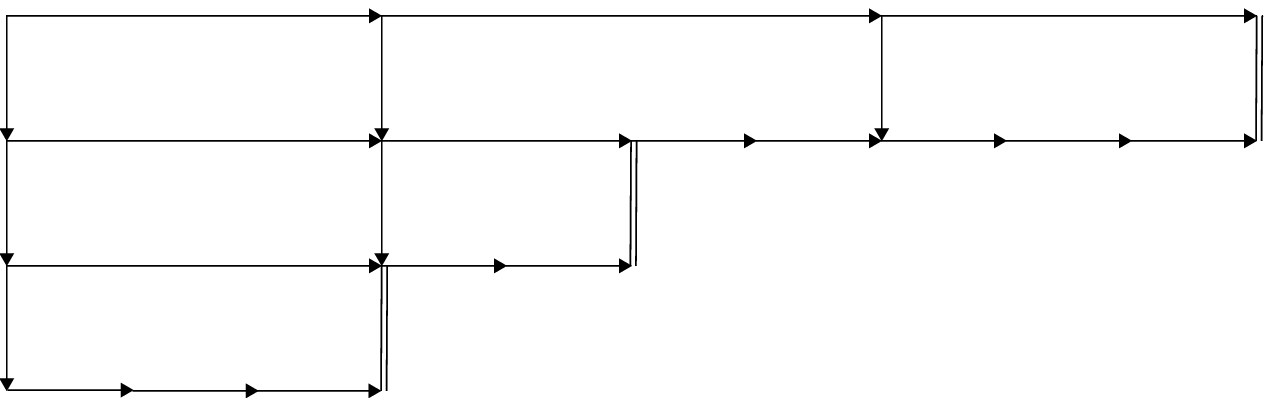}}
{\footnotesize

\put(17,42){$\aa46$}
\put(61,42){$\aa14$}
\put(105,42){$\aa26$}

\put(1,34){$\dddd36$}
\put(33.5,34){$\dddd36$}
\put(84,34){$\dddd46$}
\put(125,34){$\varepsilon$}

\put(17,29.5){$\aa35$}
\put(48,29.5){$\dddd16$}
\put(68,29.5){$\dddd25\inv$}
\put(80,29.5){$\dddd13\inv$}
\put(94,29.5){$\dddd26$}
\put(106,29.5){$\dddd35\inv$}
\put(118,29.5){$\dddd25\inv$}

\put(1,21){$\dddd36$}
\put(33.5,21){$\dddd36$}
\put(62,21){$\varepsilon$}

\put(17,17){$\aa46$}
\put(43,17){$\dddd16$}
\put(55,17){$\dddd25\inv$}

\put(1,8){$\dddd56$}
\put(36,8){$\varepsilon$}

\put(5,4){$\dddd46$}
\put(17,4){$\dddd45\inv$}
\put(30,4){$\dddd45\inv$}
}
\end{picture}
\caption{{\sf \smaller Reversing $\uu\,\ww$ into a wall. Here $\uu$ is the
$\aa37$-dangerous  word~$\dddd56\inv\,\dddd36\inv\,\dddd36\inv$ and
$\ww$ is the
$\aa37$-ladder~$\aa46\,\aa14\,\aa26$, which is lent on $\aa26$.
With the notation of Proposition~\ref{P:DangerousAgainstLadder}, 
$\wwl1$ is the word 
$\aa35\,\dddd16\,\dddd25\inv\,\dddd13\inv\,\dddd26\,\dddd35\inv\,\dddd25\inv$:
it can be read (from left to right) on the third row from the bottom.
Then $\wwl2$ is the word
$\aa46\,\dddd16\,\dddd25\inv\,\dddd25\inv\,\dddd13\inv\,\dddd26\,\dddd35\inv\,\dddd25\inv$:
it can be read on the second row, continuing on the third row when 
the vertical $\varepsilon$-labeled edge is met. Finally $\wwl3$ is the word
$\dddd46\,\dddd45\inv\,\dddd45\inv\,\dddd16\,\dddd25\inv\,\dddd25
\inv\,\dddd13\inv\,\dddd26\,\dddd35\inv\,\dddd25\inv$: it can be read on
the bottom row, continued on the second row, and finally on the
third row. The point is that we had three negative $d_{..., 6}$-letters at first and
that, at each step, we get rid of one of them, ending with a word that
contains negative $\dddd{...}{\qq}$-letters for
$\qq \le 5$ only.}}
\label{F:X:DangerousAgainsLadder}
\end{figure}
\end{exam}

We conclude the section with one more technical result, which provides the precise basic step needed
in the inductive definition of our final normal form~$\NF\nn$.

\begin{lemm}
\label{L:KeyLemma}
Assume $\nn \ge 3$, and  that 

- $(\ww_\brdi,\Ldots,\ww_1)$ is the $\ff\nn$-splitting of a normal word~$\ww$, with~$\brdi \ge 3$, 

- $\uu_\brdi$ is a $\last{\ww}_\brdi$-dangerous word,

- $\brdii$ is a number in $\{\brdi,\Ldots,3\}$.

\noindent
Then, there exists 

- a $\sig\nno$-nonnegative word $\www$,

- an $\last{\ww}_\brdii$-dangerous word~$\uu_\brdii$, 

\noindent both computable in time
$O(\len{\uu_\brdi}\len{\ww}^2)$, that satisfy
\begin{equation}
\label{E:L:KeyLemma}
 \dddd1\nn^{-\brdi+3}\ff\nn^\brdio(\uu_\brdi)\,\ff\nn^\brdit(\ww_\brdio)\Ldots\ww_1\equiv\www\cdot \dddd1\nn^{-\brdii+3}\ff\nn^\brdiio(\uu_\brdii)\,\ff\nn^\brdit(\ww_\brdiio)\Ldots\ww_1,
\end{equation}
with
$\len{\www}\le3\len{\ww_\brdio}+\Ldots+3\len{\ww_\brdii}+\len{\uu_\brdi}-\len{\uu_\brdii}-\brdi+\brdii$
and $\len{\uu_\brdii}\le\len{\uu_\brdi}+\brdi$.
\end{lemm}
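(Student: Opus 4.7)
The plan is to argue by induction on $\brdi-\brdii\ge 0$. The base case $\brdi=\brdii$ is trivial: take $\www=\varepsilon$ and $\uu_\brdii=\uu_\brdi$. For the inductive step with $\brdi>\brdii\ge 3$, I will reduce one level at a time, transforming the left-hand side of~\eqref{E:L:KeyLemma} at level $\brdi$ into some $\sig\nno$-nonnegative word~$\www_0$ followed by the analogous expression at level $\brdi-1$, and then invoke the induction hypothesis.

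The heart of the step combines the top two factors via the identity $\ff\nn^\brdio(\uu_\brdi)\cdot\ff\nn^\brdit(\ww_\brdio)=\ff\nn^\brdit\bigl(\ff\nn(\uu_\brdi)\cdot\ww_\brdio\bigr)$. By Corollary~\ref{C:SplittingAndLadders}, the word $\ww_\brdio$ is a $\ff\nn(\last{\ww_\brdi})$-ladder lent on $\last{\ww_\brdio}$, while $\ff\nn(\uu_\brdi)$ is visibly a $\ff\nn(\last{\ww_\brdi})$-dangerous word. Proposition~\ref{P:DangerousAgainstLadder} then produces an equivalent wall $\www_1^+\cdot\uu_\brdio$, where $\uu_\brdio$ is $\last{\ww_\brdio}$-dangerous and $\www_1^+$ ends with some letter $\dddd\indiio\nno$ satisfying $\indiio\le\nnt$.

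The algebraic key is a direct computation using the conjugation relation $\ff\nn(\br)=\dddd1\nn\,\br\,\dddd1\nn^{-1}$ together with the cancellation $\dddd{\indiio+1}\nn\cdot\dddd1\nn^{-1}=\dddd1{\indiio+1}^{-1}$---a consequence of the decomposition $\dddd1\nn=\dddd1{\indiio+1}\cdot\dddd{\indiio+1}\nn$ given by~\eqref{E:DeltaRelations:Decomposition}---which yields
\[
 \dddd1\nn^{-\brdi+3}\cdot\ff\nn^\brdit(\www_1^+) \equiv \ff\nn(\www_1^{++})\cdot\dddd1{\indiio+1}^{-1}\cdot\dddd1\nn^{-\brdi+4},
\]
where $\www_1^{++}$ denotes $\www_1^+$ with its terminal letter $\dddd\indiio\nno$ removed. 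Setting $\www_0=\ff\nn(\www_1^{++})\cdot\dddd1{\indiio+1}^{-1}$, the left-hand side of~\eqref{E:L:KeyLemma} at level $\brdi$ takes exactly the form $\www_0$ followed by the left-hand side at level $\brdi-1$, to which the induction hypothesis applies; accumulating then yields the desired $\www$.

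The main obstacle will be verifying that $\www_0$ is $\sig\nno$-nonnegative at each step. The factor $\dddd1{\indiio+1}^{-1}$ translates into $\siginv\indiio\cdots\siginv1$ and involves no generator $\sigpm\nno$, since $\indiio\le\nnt$. As for $\ff\nn(\www_1^{++})$: the wall fragment $\www_1^{++}$ consists of positive $a$-letters, positive $d$-letters of the form $\dddd{..}\nno$, and (in the high-wall case) a $\sig\nnt$-nonnegative middle---none of which contain $\siginv\nno$. Applying $\ff\nn$ shifts indices by one, so the $\sig\nnt$-nonnegative middle becomes $\sig\nno$-nonnegative and the letters $\dddd{..}\nno$ become the positive letters $\dddd{..}\nn$; hence $\www_0$ is free of $\siginv\nno$. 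The length and time estimates then follow by telescoping the bounds of Proposition~\ref{P:DangerousAgainstLadder}: each step contributes at most $3\len{\ww_{\kk-1}}+\len{\uu_\kk}-\len{\uu_{\kk-1}}-1$ to $\len\www$ and requires $O(\len{\uu_\kk}\len{\ww_{\kk-1}})$ elementary operations, summing over $\kk=\brdi,\ldots,\brdii+1$ to the claimed bounds.
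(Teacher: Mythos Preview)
Your proposal is correct and follows essentially the same approach as the paper's proof: both iterate a single-level reduction, using Corollary~\ref{C:SplittingAndLadders} to identify $\ww_{\kk-1}$ as a $\ff\nn(\last{\ww}_\kk)$-ladder, Proposition~\ref{P:DangerousAgainstLadder} to convert $\ff\nn(\uu_\kk)\,\ww_{\kk-1}$ into a wall, and then the algebraic identity $\ff\nn(\dddd{\indiio}\nno)\,\dddd1\nn^{-1}\equiv\dddd1{\indiio+1}^{-1}$ to absorb one power of $\dddd1\nn^{-1}$. One small caution: when you write ``invoke the induction hypothesis'' for the truncated problem at level~$\brdi-1$, note that $(\ww_{\brdi-1},\Ldots,\ww_1)$ need not itself be a $\ff\nn$-splitting, so the lemma's hypotheses are not literally satisfied; however, the only property you actually use at each step is that $\ww_{\kk-1}$ is a $\ff\nn(\last{\ww}_\kk)$-ladder, which Corollary~\ref{C:SplittingAndLadders} guarantees for the original splitting and all $\kk$, so the iteration goes through exactly as in the paper.
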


\begin{proof}
The idea  is as follows:
using induction for $\kk$ going from $\brdi$ to $\brdiip$, we compute a
$\sig\nno$-nonnegative word~$\www_\kko$ and a
$\last{\ww}_\kk$-dangerous word~$\uu_\kko$ satisfying
\begin{equation}
\label{E:L:Upper:1}
\dddd1\nn^{-\kk+1}\,\ff\nn^\kko(\uu_\kk)\,\ff\nn^\kkt(\ww_\kko)\equiv\,\www_\kko\,\dddd1\nn^{-\kk+2}\,\ff\nn^\kkt(\uu_\kko).
\end{equation}
Then we define $\www$ to be $\www_\brdio\Ldots\www_\brdii$.

Let us go into details.
First we construct the words $\www_\kk$ and $\uu_\kk$.
Fix $\kk$ in $\{\brdi, \Ldots, \brdiip\}$ and assume that $\uu_\kk$ is an $\last{\ww}_\kk$-dangerous word.
Corollary~\ref{C:SplittingAndLadders} guarantees that $\ww_\kko$ is an
$\ff\nn(\last\ww_\kk)$-ladder lent on $\last{\ww}_\kko$. Then, by
Proposition~\ref{P:DangerousAgainstLadder}, the word
$\ff\nn(\uu_\kk)\,\ww_\kko$ is equivalent to a
$\ff\nn(\last{\ww}_\kk)$-wall~$\vv_\kko$ lent on $\last{\ww}_\kko$.
 By definition of a wall, we have 
\[
\vv_\kko=\vvv_\kko\,\dddd\indio\nno\,\uu_\kko, 
\]
where $\vvv_\kko$ is a $\sig\nnt$-nonnegative word, $\uu_\kko$ is an $\last{\ww}_\kko$-dangerous
word and $\aa\indio\nno$ being the
last letter of $\ww_\kko$. Then, we obtain
\[
 \dddd1\nn^{-\kk+1}\,\ff\nn^{\kko}(\uu_\kk)\,\ff\nn^\kkt(\ww_\kko)\equiv\dddd1\nn^{-\kk+1}\,\ff\nn^\kkt(\vvv_\kko\,\dddd\indio\nno\,\uu_\kko).
 \]
We push the power of $\dddd1\nn\inv$ to the last word between $\dddd\indio\nno$ and
$\uu_\kko$:
\[
\dddd1\nn^{-\kk+1}\,\ff\nn^\kkt(\vvv_\kko\,\dddd\indio\nno\,\uu_\kko)\equiv\ff\nn(\vvv_\kko\,\dddd\indio\nno)\,\dddd1\nn^{-\kk+1}\,\ff\nn^\kkt(\uu_\kko)\]
By Relation~\eqref{E:DeltaRelations:Decomposition}, we have $\ff\nn(\dddd\indio\nno)\,\dddd1\nn\inv\equiv\dddd1\indi\inv$.
Eventually, we obtain
\begin{equation}
\label{E:L:Upper:2}
  \dddd1\nn^{-\kk+1}\,\ff\nn^\kko(\uu)\,\ff\nn^\kkt(\ww_\kko)\equiv\ff\nn(\vvv_\kko)\,\dddd1\indi\inv\,\dddd1\nn^{-\kk+2}\,\ff\nn^\kkt(\uuu).
\end{equation}
Writing  $\www_\kko=\ff\nn(\vvv_\kko)\,\dddd1\indi\inv$, Relation \eqref{E:L:Upper:2} implies \eqref{E:L:Upper:1}.
By construction, $\www_\kko$ is $\sig\nno$-nonnegative and $\uu_\kk$ is an
$\last{\ww}_\kk$-dangerous word.

Gathering the relations \eqref{E:L:Upper:1} for $\kk$ form $\brdi$ to $\brdiip$, we obtain the relation \eqref{E:L:KeyLemma} for $\www=\www_\brdio\,\Ldots\www_\brdii.$

By construction, $\www_\kk$ is $\sig\nno$-nonnegative for each~$\kk$,
hence $\www$ is $\sig\nno$-nonnegative as well.

It remains to establish the complexity statements.
For every $\kk$ in $\{\brdi,\Ldots,\brdiip\}$, \eqref{P:DAL:ii} implies
$\len{\vv_\kko}\le3\len{\ww_\kko}+\len{\uu_\kk}-1$. Then, by construction
of $\www_\kko$, we have
$\len{\www_\kko}\le3\len{\ww_\kko}+\len{\uu_\kk}-\len{\uu_\kko}-1$.
We deduce
\[
\len{\www}=\len{\www_\brdio\Ldots\www_\brdii}\le3\len{\ww_\brdio
\Ldots\ww_\brdii}+\len{\uu_\brdi}-\len{\uu_\brdii}-\brdi+\brdii.
\]

For each $\kk$, \eqref{P:DAL:i} implies
$\len{\uu_\kk}\le\len{\uu_\kkp}\plus1$. Then, we find
$\len{\uu_\kk}\le\len{\uu_\brdi}+\brdi-\kk$, hence $\len{\uu_\kk}\le\len{\uu_\brdi}+\brdi$.
By Proposition~\ref{P:DangerousAgainstLadder}, computing $\uu_\kk$ and
$\vv_\kk$ from $\ww_\kk$ and $\uu_\kkp$ requires at most
$O(\len{\uu_\kkp}\len{\ww}+1)$ steps. Therefore, computing $\www$
and $\uu_\brdii$ from~$\uu_\brdi$ and the $\ff\nn$-splitting of $\ww$
requires at most $O((\len{\uu_\brdi}+\brdi)\len{\ww}+\brdi-\brdii)$ steps.
As $\brdi\le\len\ww+2$ holds, we deduce that computing $\www$ and
$\uu_\brdii$ from $\uu_\brdi$ and the $\ff\nn$-splitting of~$\ww$
requires at most $O(\len{\uu_\brdi}\len{\ww}^2)$ steps.
\end{proof}

%
%

\section{The main result}
\label{S:TheMainResult}

We are now ready to establish Theorems~$1$ and~$2$ of the
introduction. What we shall do is to construct, for each $\nn$-strand
braid~$\br$, a certain $ad$-word~$\nf\nn(\br)$ that represents~$\br$ and that is
$\sigg$-definite, \ie, is a word in the letters $\aa\indi\indii$ and
$\dddd\indi\indii$ which, translated to the alphabet of~$\sig\ii$, becomes
either $\sigg$-positive or $\sigg$-negative.

The construction of the word~$\nf\nn(\br)$ involves two steps. The first
(easy) step, described in Section~\ref{SS:RotatingNormalForm},  consists in
extending the rotating normal form of Section~\ref{S:RotatingNF} to all
of~$\BB\nn$ by appending convenient denominators. The process is based on
the Garside structure of the monoid~$\BKL\nn$.

The second step starts from the rotating normal form, and it
is described in Section~\ref{SS:TheSigmaDefiniteNormalForm}. The process
splits into three cases according to the relative position of two parameters
associated with~$\br$, namely the breadth of the numerator and the
exponent of the denominator in the rotating normal form of~$\br$. The 
reversing machinery developed in Sections~\ref{S:Reversing} and \ref{S:Walls}
is needed to treat the difficult case, which is the case when the above two
parameters are close one to the other.

%
%

\subsection{The rotating normal form of an arbitrary braid}
\label{SS:RotatingNormalForm}

As mentioned above, we first extend the rotating normal form, so far
defined only for those braids that belong to the monoid~$\BKL\nn$, to all
braids.

\begin{prop}
\label{P:GarsideQuotient}
Each braid $\br$ admits a unique expression
$\dddd1\nn^{-\tt} w$ where $\tt$ is a nonnegative integer, 
$\ww$ is a (rotating) normal word, and the braid $\wwt$ is not left-divisible by
$\dddd1\nn$ unless $\tt$ is zero.
\end{prop}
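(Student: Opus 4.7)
The plan is to establish existence and uniqueness separately, relying on the Garside structure of~$\BKL\nn$ and the uniqueness of the rotating normal form on the monoid.

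For existence, I would start from the fact (Proposition~\ref{P:Garside}) that $\BB\nn$ is the group of fractions of the Garside monoid~$\BKL\nn$ with Garside element~$\ddd\nn = \dddd1\nn$. Given an arbitrary~$\br$ in~$\BB\nn$, write $\br = \brr_1\,\brr_2\inv$ with $\brr_1, \brr_2 \in \BKL\nn$; since $\ddd\nn$ is a Garside element, some power~$\ddd\nn^N$ is a right multiple of~$\brr_2$, say $\ddd\nn^N = \brr_2\,\brr_2'$, so $\br = \brr_1\,\brr_2'\,\ddd\nn^{-N}$. Putting $\brr = \brr_1\,\brr_2'$, we have $\br = \brr\,\ddd\nn^{-N}$ with $\brr \in \BKL\nn$. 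Then I would use the rotating relation $\ddd\nn\,\xx = \ff\nn(\xx)\,\ddd\nn$ iteratively to get $\brr\,\ddd\nn^{-N} = \ddd\nn^{-N}\,\ff\nn^N(\brr)$, yielding an expression $\br = \ddd\nn^{-N}\,\brr'$ with $\brr' \in \BKL\nn$ and $N \ge 0$.

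To reach the required form, I would minimize~$\tt$: as long as $\tt \ge 1$ and $\brr'$ is left-divisible by~$\ddd\nn$, write $\brr' = \ddd\nn\,\brr''$ (with $\brr'' \in \BKL\nn$, by left-cancellativity and closure of~$\BKL\nn$ under the Garside left-quotient) and replace $\ddd\nn^{-\tt}\brr'$ by $\ddd\nn^{-(\tt-1)}\brr''$. This loop terminates because the word length of the numerator in~$\BKL\nn$ strictly decreases at each step; it stops either when $\tt = 0$ or when the numerator is no longer left-divisible by~$\ddd\nn$, exactly matching the stated condition. Finally, let $\ww$ be the rotating normal form of the resulting numerator, which exists by Definition~\ref{D:RotatingNormalForm}.

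For uniqueness, suppose $\ddd\nn^{-\tt}\,\wwt = \ddd\nn^{-\tt'}\,\wwwt$ with both expressions satisfying the stated conditions and, without loss of generality, $\tt \le \tt'$. Left-multiplying by $\ddd\nn^{\tt'}$ gives $\wwwt = \ddd\nn^{\tt'-\tt}\,\wwt$, so $\wwwt$ is left-divisible by~$\ddd\nn^{\tt'-\tt}$ in the monoid~$\BKL\nn$. If $\tt < \tt'$, then $\wwwt$ is left-divisible by~$\ddd\nn$; by the hypothesis on the second expression, this forces $\tt' = 0$, hence also $\tt = 0$ and in fact $\tt = \tt'$, a contradiction. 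Therefore $\tt = \tt'$ and $\wwt = \wwwt$. Uniqueness of the rotating normal form on~$\BKL\nn$ (a direct consequence of Definition~\ref{D:RotatingNormalForm} and the uniqueness of the $\ff\nn$-splitting in Proposition~\ref{P:Splitting}) then gives $\ww = \www$.

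The only subtle point is ensuring that the initial step---writing an arbitrary braid as $\brr\,\ddd\nn^{-N}$---is available; this rests on the Garside/Ore properties recorded in Proposition~\ref{P:Garside} and on the fact that $\ddd\nn$ is a Garside element whose powers admit every simple element as a divisor. Once this is in place, the rest is a routine minimization argument combined with the uniqueness of the rotating normal form already established.
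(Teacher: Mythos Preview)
Your proposal is correct and follows essentially the same approach as the paper's proof. The only difference is cosmetic: the paper directly invokes ``the smallest integer~$\tt$ such that $\ddd\nn^{\tt}\,\br$ belongs to~$\BKL\nn$'' (whose existence is guaranteed by the Garside structure), whereas you construct such an expression explicitly as a right fraction, conjugate the denominator to the left, and then iteratively cancel; the uniqueness arguments are virtually identical.
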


\begin{proof}
By Proposition~\ref{P:Garside}, the monoid~$\BKL\nn$ is a Garside monoid with
Garside element $\ddd\nn$, and the group~$\BB\nn$ is a group of fractions for
the monoid $\BKL\nn$. Hence, there exists a smallest integer $\tt$ such
that
$\ddd\nn^{\tt}\,\br$ belongs to the monoid $\BKL\nn$. If $\tt$ is positive,
the minimality hypothesis implies that $\ddd\nn$ is not a  left-divisor of
$\ddd\nn^{\tt}\,\br$. Taking for $\ww$ the rotating normal form of
$\ddd\nn^{-\tt}\,\br$ gives a pair $(\tt, \ww)$ of the expected form---we recall
that $\dddd1\nn\equiv\ddd\nn$ holds.

Assume that $(\tt', \www)$ is another pair with the above properties. 
Then $\ddd\nn^{\tt'}\,\br$ belongs to $\BKL\nn$, hence we
have $\tt' \ge \tt$. If we had $\tt' > \tt$, the hypothesis
$\ddd\nn^{-\tt}\,\wwt = \ddd\nn^{-\tt'} \wwwt$
would imply $\ddd\nn^{\tt'-\tt} \wwt = \wwwt$,
implying that $\wwwt$ is left-divisible by $\ddd\nn$,
which contradicts $\tt' > 0$. Hence we have $\tt' = \tt$, whence
$\www = \ww$ by uniqueness of the rotating normal form.
\end{proof}

\begin{defi}
The $ad$-word $\dddd1\nn^{-\tt} \ww$ involved in
Proposition \ref{P:GarsideQuotient} is called the $\nn$-rotating normal form of 
the braid $\br$.
The number $\tt$ is called the \emph{$\nn$-depth} of $\br$,
denoted~$\dpt\nn(\br)$; the number $\tt+\len{\ww}$, \ie, the length of the
$ad$-word $\dddd1\nn^{-\tt} \ww$, is called the
\emph{$\nn$-length} of~$\br$, denoted~$\lenr\nn\br$; finally, for
$\nn\ge3$, the $\nn$-breadth of~$\ww$ is called the \emph{$\nn$-breadth}
of $\br$, denoted~$\brd\nn(\br)$.
\end{defi}

By definition, the rotating normal form of a braid is
an $ad$-word, \ie, a word involving the letters $\aa\indi\indii$
and the letters $\dddd\indi\indii$ (actually the letter $\dddd1\nn\inv$
only). The terminology is coherent since, for $\br$ in~$\BKL\nn$, the
rotating normal form as defined above coincides with the rotating normal form
of Definition~\ref{D:RotatingNormalForm}: indeed, $\br$ belongs to
$\BKL\nn$ if and only if its $\nn$-depth is $0$.

Building on Proposition~\ref{P:AlgoC} and on the Garside structure of $\BKL\nn$, we easily see that
the rotating normal form of an arbitrary braid can be
computed in quadratic time.

\begin{lemm}
\label{L:DivisionGarsideElementByAtom}
For $\nn\ge3$ and $1 \le \ii \le \nno$, let $\theta_{\ii, \nn}$ be the $a$-word $\ff\nn^\iip(\ddd\nno)$.
Then  
$\theta_{\ii,\nn}$ is equivalent to~$\ddd\nn\,\siginv\ii$, and it has length~$\nnt$.
\end{lemm}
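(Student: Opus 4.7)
The plan is a direct computation based on two facts: (1) the rotating automorphism $\ff\nn$ is inner (conjugation by $\ddd\nn$), so it fixes $\ddd\nn$; and (2) the cyclic orbit of the letter $\sig\nno=\aa\nno\nn$ under $\ff\nn$ visits all the $\sig\ii$.

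First I would record the braid identity
\[
\ddd\nno\,\equiv\,\ddd\nn\,\sig\nno\inv,
\]
which follows immediately from $\ddd\nn=\sig1\sig2\ldots\sig\nno$ and $\ddd\nno=\sig1\sig2\ldots\sig\nnt$. Applying $\ff\nn^{\iip}$ and using $\ff\nn(\ddd\nn)=\ddd\nn\,\ddd\nn\,\ddd\nn\inv=\ddd\nn$, this gives
\[
\ff\nn^{\iip}(\ddd\nno)\,\equiv\,\ddd\nn\cdot\ff\nn^{\iip}(\sig\nno)\inv.
\]
So the equivalence part reduces to showing $\ff\nn^{\iip}(\sig\nno)\equiv\sig\ii$.

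For this, I would iterate the formulas~\eqref{E:Rotation} starting from $\sig\nno=\aa\nno\nn$. The first application is the wrap-around case, $\ff\nn(\aa\nno\nn)=\aa1\nn$; the second uses the same case again, $\ff\nn^2(\aa\nno\nn)=\ff\nn(\aa1\nn)=\aa12=\sig1$. From then on, for $2\le k\le\nno$, we are in the generic range and $\ff\nn^{k+1}(\aa\nno\nn)=\ff\nn^{k-1}(\aa12)=\aa{k}{k\plus1}=\sig{k}$. Setting $k\plus1=\iip$, i.e.\ $k=\ii$, we obtain $\ff\nn^{\iip}(\sig\nno)=\sig\ii$ for every $\ii$ in $\{1,\ldots,\nno\}$, which combined with the display above yields $\theta_{\ii,\nn}\equiv\ddd\nn\,\sig\ii\inv$.

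For the length assertion, I would simply note that $\ddd\nno=\aa12\,\aa23\,\ldots\,\aa\nnt\nno$ is an $a$-word of length $\nnt$, and that $\ff\nn$ acts as an alphabetical homomorphism sending each letter $\aa\indi\indii$ to a single letter. Hence $\theta_{\ii,\nn}=\ff\nn^{\iip}(\ddd\nno)$ is an $a$-word of the same length $\nnt$. I do not expect any real obstacle: the only subtle point is to apply the wrap-around case of~\eqref{E:Rotation} correctly in the first two iterations, after which the argument is mechanical.
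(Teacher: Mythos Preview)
Your proof is correct and follows essentially the same approach as the paper: both arguments reduce to the two facts that $\ddd\nn$ is $\ff\nn$-invariant and that $\ff\nn^{\iip}(\aa\nno\nn)=\aa\ii\iip=\sig\ii$, together with the length-preservation of~$\ff\nn$ on $a$-words. The only cosmetic difference is that the paper applies $\ff\nn^{\iip}$ to the identity $\ddd\nn=\ddd\nno\,\sig\nno$ rather than to $\ddd\nno=\ddd\nn\,\sig\nno\inv$, and states $\ff\nn^{\iip}(\aa\nno\nn)=\aa\ii\iip$ directly from Lemma~\ref{L:Rotation} without spelling out the two wrap-around steps.
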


\begin{proof}
By Lemma~\ref{L:Rotation}, we have $\ff\nn^{\iip}(\aa\nno\nn)=\aa\ii\iip$. 
We deduce $\ff\nn^\iip(\ddd\nn)\equiv\ff\nn^\iip(\ddd\nno)\,\sig\ii
= \theta_{\ii,\nn}\,\sig\ii$. As $\ddd\nn$ is invariant under
$\ff\nn$, we have $\ddd\nn=\theta_{\ii,\nn}\,\sig\ii$. The length of the
$a$-word $\ddd\nno$ is $\nnt$. As $\ff\nn$ preserves the length of $a$-word,
the length of $\theta_{\ii,\nn}$ is $\nnt$.
\end{proof}

\begin{prop}
\label{P:RotatingNF}
For each $\nn$-strand braid $\br$, we have $\lenr\nn{\br}\le(\nno)\,\lens{\br}$ .
Moreover, if $\br$ is specified by a word of length $\ll$, the rotating normal
form of $\br$ can be computed in time $O(\ll^2)$.
\end{prop}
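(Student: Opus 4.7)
The plan is to convert any $\sigg$-word representative of $\br$ into an $ad$-word of the form $\dddd1\nn^{-q}\,v$ with $v$ a positive $a$-word, and then relate $\len v$ to the length of the rotating normal form of the numerator of $\br$ by exploiting the length-invariance of equivalent positive $a$-words. Fix a $\sigg$-word $\www$ of length $\ll=\lens\br$ representing $\br$, and let $q$ be the number of negative letters in $\www$. Substitute $\sig\ii\equiv\aa\ii\iip$ for each positive letter and $\siginv\ii\equiv\dddd1\nn\inv\,\theta_{\ii,\nn}$ for each negative letter, where Lemma~\ref{L:DivisionGarsideElementByAtom} guarantees that $\theta_{\ii,\nn}$ is a positive $a$-word of length $\nnt$. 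The resulting $ad$-word contains $q$ copies of $\dddd1\nn\inv$ together with $(\ll-q)+q\,\nnt=\ll+q(\nn-3)$ positive $a$-letters. Iteratively applying $\dddd1\nn\inv\,x\equiv\ff\nn\inv(x)\,\dddd1\nn\inv$ (valid for any positive $a$-letter $x$ by definition of $\ff\nn$) pushes every $\dddd1\nn\inv$ to the left and yields an equivalent $ad$-word $\dddd1\nn^{-q}\,v$ in which $v$ is a positive $a$-word of length $\ll+q(\nn-3)$.

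Write the rotating normal form of $\br$ as $\dddd1\nn^{-\tt}\,\ww$, so that $\br'=\ddd\nn^{-\tt}\br$ lies in $\BKL\nn$ and $\ww$ is its rotating normal form. From $\ddd\nn^{-q}\,\overline v=\br=\ddd\nn^{-\tt}\br'$ we deduce $\overline v=\ddd\nn^{q-\tt}\br'$, in particular $\tt\le q$ by minimality of $\tt$. Concatenating $q-\tt$ copies of the $a$-word $\aa12\,\aa23\,\cdots\,\aa\nno\nn$ (of length $\nno$, representing $\ddd\nn$) with $\ww$ produces a positive $a$-word equivalent to $\overline v$. Since the relations of Lemma~\ref{L:DualRelations} preserve length, all positive $a$-words equivalent to $\overline v$ share the common length $(q-\tt)\,\nno+\len\ww$, whence $\len\ww=\len v-(q-\tt)\,\nno$. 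Substituting and simplifying,
\[
\lenr\nn\br=\tt+\len\ww=\tt+\bigl[\ll+q(\nn-3)\bigr]-(q-\tt)\,\nno=\ll+\tt\,\nn-2q,
\]
and the chain $\tt\le q\le\ll$ yields $\tt\,\nn-2q\le q(\nn-2)\le\ll(\nn-2)$, so $\lenr\nn\br\le(\nn-1)\,\ll$.

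For the complexity, the conversion from $\www$ to $\dddd1\nn^{-q}\,v$ can be performed in $O(\ll)$ steps by scanning $\www$ from right to left while maintaining an exponent counter for $\ff\nn\inv$: the counter is incremented at each negative letter, and the current rotation (an $O(1)$ shift on indices) is applied to each $a$-letter before emission. Proposition~\ref{P:AlgoC} then computes the rotating normal form of $\overline v$ in $O(\len v^{\,2})=O(\ll^2)$ time. To recover the rotating normal form of $\br$ itself, we extract the largest $\kk\le q$ such that $\ddd\nn^\kk$ left-divides $\overline v$ via iterated left-divisibility tests in the Garside monoid $\BKL\nn$, each costing $O(\ll)$ time and running at most $q\le\ll$ times. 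The main technical point is the identity $\lenr\nn\br=\ll+\tt\,\nn-2q$, which packages the length-invariance in $\BKL\nn$ with the bookkeeping on the $\ddd\nn$-exponents; once it is in hand, both the length bound and the complexity analysis follow routinely.
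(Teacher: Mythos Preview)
Your argument is correct and follows essentially the same route as the paper: replace each $\sigma_i^{\pm1}$ using $a_{i,i+1}$ or $d_{1,n}^{-1}\theta_{i,n}$, push the negative $d$-letters to the left via $\phi_n$, then strip off the maximal power of $\delta_n$ and normalize the remaining positive part. Your length analysis is in fact more explicit than the paper's: you derive the exact identity $\lenr\nn\br=\ell+tn-2q$ from the length-invariance in $\BKL\nn$, whereas the paper simply observes that each letter is replaced by at most $n-1$ letters and that the subsequent $\delta_n$-cancellation cannot increase the total $ad$-length.

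One small point in your complexity paragraph: after computing the rotating normal form of~$\overline v$ and extracting the largest $k\le q$ with $\delta_n^k$ left-dividing $\overline v$, you still need the rotating normal form of the quotient $\delta_n^{-k}\overline v$ (this is the word~$\ww$ in $\dddd1\nn^{-t}\ww$). You do not state this step explicitly. It does not affect the $O(\ell^2)$ bound, since one can simply re-run Proposition~\ref{P:AlgoC} on a positive $a$-word representing the quotient, but the paper avoids the redundancy by performing the $\delta_n$-division \emph{before} the normalization rather than after.
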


\begin{proof}
The case $\nn = 2$ is trivial.
Starting with a word on the alphabet~$\{\sig1,
\siginv1\}$, we freely reduce it to~$\sig1^\kk$ by deleting
the factors~$\sig1\siginv1$ and~$\siginv1\sig1$. The rotating normal
form is $\aa12^\kk$ in the case~$\kk \ge 0$, and $\dddd12^\kk$
in the case $\kk < 0$, and it is geodesic.

Assume now $\nn\ge3$. Let $\ww$ be an $\nn$-strand braid word representing $\br$.
Then the rotating normal form of $\br$ is obtained as follows:

- Replace each positive letter~$\sig\ii$ in~$\ww$ with $\aa\ii\iip$, so as to obtain
\[\uu = \ww_0 \siginv{\ii_1} \ww_1\Ldots\ww_\brdiio \siginv{\ii_\brdiio} \ww_\brdii;\]

- Put $\vv=\ff\nn^\brdii(\ww_0)\,
\ff\nn^\brdiio(\theta_{\ii_1,\nn} \,\ww_1)\,\Ldots\,
\ff\nn(\theta_{\ii_\brdiio ,\nn}\,\ww_\brdiio)\,\theta_{\ii_\brdii ,\nn}\,\ww_\brdii$;

- Let $\ss$ be the maximal integer such that $\ddd\nn^\ss$ left-divides~$\overline{\vv}$
in~$\BKL\nn$, and let $\vvv$ be a positive $a$-word satisfying $\vv \equiv \ddd\nn^\ss \vvv$;

- If $\ss \ge \brdii$ holds, put $\tt=0$ and
$\wwww=\ddd\nn^{\ss-\brdii}\,\vvv$; otherwise put 
$\tt=\brdii-\ss$ and $\wwww=\vvv$.

- Let $\www$ be the normal form of $\wwwwt$.
Then the rotating normal form of $\br$ is~$\dddd1\nn^{-\tt}\,\www$.

\noindent
Indeed, Lemma~\ref{L:DivisionGarsideElementByAtom} and $\ddd\nn\equiv\dddd1\nn$ imply 
\[
\ww\equiv\ww_0\,\dddd1\nn\inv\,\theta_{\ii_1,\nn}\,\ww_1\,\Ldots\,\dddd1\nn\inv\,\theta_{\ii_\brdiio,\nn}\,\ww_\brdiio\,\dddd1\nn\inv\,\theta_{\ii_\brdii,\nn}\,\ww_\brdii
\]
Pushing the letters $\dddd1\nn\inv$ to the left, we obtain
\[
 \ww\equiv\dddd1\nn^{-\brdii}\,\ff\nn^\brdii(\ww_0)\,
\ff\nn^\brdiio(\theta_{\ii_1,\nn} \,\ww_1)\,\Ldots\,
\ff\nn(\theta_{\ii_\brdiio ,\nn}\,\ww_\brdiio)\,\theta_{\ii_\brdii ,\nn}\,\ww_\brdii=\dddd1\nn^{-\brdii}\,\vv.
\]
Using the relation $\dddd1\nn\equiv\ddd\nn$ and the construction of $\www$, we obtain $\ww\equiv\dddd1\nn^{-\tt}\,\www$, where $\wwwt$ is not left-divisible by $\dddd1\nn$ unless $\tt$ is zero.

As for the length, replacing $\sig{\ii_\kk}$ by $\dddd1\nn\inv\,\theta_{\ii_\kk}$ multiplies it by at most~$\nno$.
Applying the construction in the case when $\ww$ is a shortest representative of~$\br$ gives $\lenr\nn\br\le(\nno)\lens\br$.

As for the  time complexity, $\vv$ is
obtained in time
$O(\ll)$, the integer $\ss$ is obtained in time
$O(\ll^2)$---see for instance \cite{EpsteinCanonHoltLevyPatersonThurston}---and $\www$ is obtained
in time
$O(\len{\wwww}^2)$ by Proposition~\ref{P:AlgoC}. Hence, as
$\len{\wwww}\le\ll$ holds, the rotating normal form of $\br$ is obtained from
the word $\ww$ in time $O(\ll^2)$.
\end{proof}

\begin{exam}
\label{X:RotatingNormalForm}
Consider $\br = \sig1\,\sigma_3^{-2}\,\sig2\,\sig3$. We use the notation of
Proposition~\ref{P:RotatingNF}. First, we write
$\uu=\ww_0\,\siginv3\,\ww_1\,\siginv3\,\ww_2$ with $\ww_0=\aa12$,
$\ww_1=\varepsilon$ and $\ww_2=\aa23\,\aa34$. Then we have
$\theta_{3,4} = \ff4^4(\ddd3) = \aa12\,\aa23$, and we find
\[
\vv=\ff4^2(\ww_0)\,\ff4(\theta_{3,4}\,\ww_1)\,\theta_{3,4}\,\ww_2=\aa34\ \aa23\,\aa34\ \aa12\,\aa23\,\aa23\,\aa34.
\]
The maximal power of $\ddd4$ that left-divides $\overline{\vv}$ is $1$ and we
have
$\vv \equiv \ddd4\,\aa23\,\aa12\,\aa23\,\aa24$. So we find $\ss=1$
and
$\vvv=\aa23\,\aa12\,\aa23\,\aa24$. Here we have $\brdii=2$ and $\ss=1$
hold, hence we put
$\tt=1$ and $\wwww=\aa23\,\aa12\,\aa23\,\aa24$. The rotating
normal form $\www$ of~$\overline{\wwww}$ turns out to be $\aa12\,\aa14\,\aa23\,\aa12$.
So, finally, the rotating normal form of~$\br$ is 
\[
 \dddd14\inv\,\aa12\,\aa14\,\aa23\,\aa12.
\]
Hence the $4$-depth of $\br$ is $1$, its length is $5$, and its $4$-breadth is
$4$, since we saw in Example~\ref{X:RotatingToSplitting} that the
$4$-breadth of $\aa12\,\aa14\,\aa23\,\aa12$ is~$4$: its $\ff4$-splitting is
$(\aa23, \aa23, 1, \aa23\,\aa12)$, a sequence of length~$4$.
\end{exam}

\subsection{The word $\nf\nn(\br)$: the easy cases}
\label{SS:TheSigmaDefiniteNormalForm}

Starting from the rotating normal form, we shall now
define for each braid $\br$ a new distinguished
representative $\nf\nn(\br)$ that is a $\sigg$-definite
word. The word~$\nf\nn(\br)$ will be constructed as a word on
the letters $\aa\indi\indii$ and $\dddd\indi\indii$. At the end, it will be
obvious to translate it into an ordinary braid word, \ie, a word on the
letters~$\sig\ii$.

The construction of $\nf\nn(\br)$ depends on the relative 
values of $\dpt\nn(\br)$ and $\brd\nn(\br)$.
The first case, which is easy, is when the $\nn$-depth of $\br$ is $0$, \ie,
when $\br$ belongs to~$\BKL\nn$, or it is $\lenr\nn\br$, \ie, when $\br$ is a
negative power of~$\dddd1\nn$. Note that this case is the only possible one in
the case of~$\BB2$.

\begin{defi}
\label{D:NF:VeryEasy}
Assume that $\br$ is a braid of $\BB\nn$ satisfying $\dpt\nn(\br) = 0$ or
$\dpt\nn(\br) = \lenr\nn\br$. Then we define $\nf\nn(\br)$ to be the
$\nn$-rotating normal form of $\br$.
\end{defi}


In this case, everything is clear.

\begin{prop}
\label{P:NF:VeryEasy}
Under the hypotheses of Definition~\ref{D:NF:VeryEasy}, the
word $\nf\nn(\br)$ is a 
$\sigg$-definite expression of $\br$, and its length
is at most $\lenr\nn\br$. Moreover, if $\br$
is specified by a $\sigg$-word of length $\ll$, the
word $\nf\nn(\br)$ can be computed in time $O(\ll^2)$.
\end{prop}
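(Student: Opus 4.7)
The plan is to unwind the definition $\nf\nn(\br) = \dddd1\nn^{-\tt}\,\ww$, where $\tt = \dpt\nn(\br)$ and $\ww$ is the normal word supplied by Proposition~\ref{P:GarsideQuotient}, and observe that the hypothesis $\tt \in \{0,\,\lenr\nn\br\}$ forces exactly one of the two factors to be empty. This leaves a word which, once translated into $\sigg$-letters via~\eqref{E:Translation}, has uniform sign at its highest index. Apart from that sign check, both the length bound and the time-complexity bound will be immediate consequences of the definitions and of Proposition~\ref{P:RotatingNF}.

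First, I would handle the case $\tt = 0$, in which $\nf\nn(\br) = \ww$ is a positive $a$-word in $\WBKL\nn$. By~\eqref{E:Translation}, each letter $\aa\indi\indii$ translates into a $\sigg$-word whose highest-index letter is $\sig{\indiio}$, occurring exactly once and positively, while every other letter has index in $[\indi,\indiit]$. Setting $\mm = \max\{\indii - 1 : \aa\indi\indii \text{ occurs in } \ww\}$, the word $\wwb$ therefore contains $\sig\mm$ only positively and contains no letter $\sig\jj^{\pm 1}$ with $\jj > \mm$. Hence $\wwb$ is $\sig\mm$-positive, so $\nf\nn(\br)$ is $\sigg$-definite. (The degenerate case $\ww = \varepsilon$ corresponds to $\br = 1$, for which the empty word is the trivial representative.) The symmetric case $\tt = \lenr\nn\br$ forces $\len\ww = 0$, so $\ww = \varepsilon$ and $\nf\nn(\br) = \dddd1\nn^{-\tt}$. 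By~\eqref{E:Translation}, this translates to $(\siginv\nno\,\siginv\nnt\ldots\siginv1)^{\tt}$, which, for $\tt \ge 1$, is $\sig\nno$-negative, hence $\sigg$-definite.

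Finally, for the length, the $ad$-word $\nf\nn(\br)$ has exactly $\tt + \len\ww$ letters, which equals $\lenr\nn\br$ by definition of the $\nn$-length; the stated bound therefore holds with equality. For the complexity, $\nf\nn(\br)$ in both cases of Definition~\ref{D:NF:VeryEasy} is literally the rotating normal form, so Proposition~\ref{P:RotatingNF} produces it from a $\sigg$-word of length~$\ll$ in time $O(\ll^2)$, with no further transformation needed. No step here presents a real obstacle; the only point deserving care is the bookkeeping on signs in the first paragraph, to ensure that after translation the maximum index survives only positively in the $\tt=0$ case and only negatively in the $\tt=\lenr\nn\br$ case.
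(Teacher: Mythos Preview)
Your proposal is correct and follows essentially the same approach as the paper's own proof: split into the cases $\tt=0$ (positive $a$-word, hence $\sigma$-positive) and $\tt=\lenr\nn\br$ (pure negative power of $\dddd1\nn$, hence $\sig\nno$-negative), then invoke Proposition~\ref{P:RotatingNF} for the complexity. The paper simply asserts that a nonempty positive $a$-word is $\sigma$-positive, whereas you spell out the sign bookkeeping via~\eqref{E:Translation}; this extra care is harmless and arguably clearer.
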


\begin{proof}
If $\lenr\nn\br$ is equal to $0$, then $\br$ is the trivial braid $1$ and its rotating normal
form is the empty word. If $\br$ is nontrivial with $\dpt\nn(\br) =0$, then the
rotating normal form is a nonempty positive $a$-word, \ie, a $\sigg$-positive
word. If $\br$ is nontrivial with $\dpt\nn(\br)=\lenr\nn\br$, then the rotating
normal form of $\br$ is $\dddd1\nn^{-\dpt\nn(\br)}$, which is
$\sig\nno$-negative. The complexity statements are clear from
Proposition~\ref{P:RotatingNF}.
\end{proof}

The second case, which is easy as well, is when the depth is 
large. We recall that, if $\ww$ is a normal word, then the $\ff\nn$-splitting of~$\ww$ is the sequence of normal words that represent the entries in the $\ff\nn$-splitting of the braid represented by~$\ww$.

\begin{defi}
\label{D:NF:Easy}
Assume that $\br$ is a nontrivial braid of $\BB\nn$ with $\nn \ge 3$ satisfying
$\dpt\nn(\br)\not=0$ and $\dpt\nn(\br) > \brd\nn(\br)-2$. Let
$\dddd1\nn^{-\tt}\,\ww$ be the rotating normal form of $\br$ and
$(\ww_\brdi,\Ldots,\ww_1)$ be the splitting of $\ww$. Then we put
$$\nf\nn(\br) = \dddd1\nn^{-\tt\plus\brdio}\cdot\ww_\brdi\,\dddd1\nn\inv\cdot
...\cdot\ww_2\,\dddd1\nn\inv\cdot\ww_1.$$
\end{defi}

\begin{prop}
\label{P:NF:Easy}
Under the hypotheses of Definition~\ref{D:NF:Easy}, the word $\nf\nn(\br)$ is
a  $\sigg$-negative expression of~$\br$, and its length
is at most $\lenr\nn\br$. Moreover, if $\br$
is specified by a $\sigg$-word of length~$\ll$, the
word $\nf\nn(\br)$ can be computed in time $O(\ll^2)$.
\end{prop}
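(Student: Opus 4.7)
The plan is to check the four required properties (equivalence, $\sigg$-negativity, length bound, complexity) in turn, the first two being the substantive points and the other two being bookkeeping.

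First I would verify that the word $\nf\nn(\br)$ in Definition~\ref{D:NF:Easy} does represent~$\br$. By definition of the rotating normal form, $\br = \ddd\nn^{-\tt}\,\wwt$, and by Definition~\ref{D:RotatingNormalForm} applied to $\wwt$ we have $\wwt = \ff\nn^{\brdio}(\wwt_\brdi)\cdot\ff\nn^{\brdit}(\wwt_\brdio)\cdots\ff\nn(\wwt_2)\cdot\wwt_1$. Extending $\ff\nn$ to $\BB\nn$ as conjugation by~$\ddd\nn$ (which is consistent with Lemma~\ref{L:Rotation}), we have $\ff\nn^{\kk}(\br') = \ddd\nn^{\kk}\,\br'\,\ddd\nn^{-\kk}$ for every $\br'$. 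Substituting this identity into each factor and telescoping produces
\[\wwt = \ddd\nn^{\brdio}\,\wwt_\brdi\,\ddd\nn^{-1}\,\wwt_\brdio\,\ddd\nn^{-1}\cdots\ddd\nn^{-1}\,\wwt_1,\]
and pre-multiplying by $\ddd\nn^{-\tt}$ and using $\dddd1\nn\equiv\ddd\nn$ yields exactly the word of Definition~\ref{D:NF:Easy}. The hypothesis $\tt > \brdi-2$, i.e., $\tt - \brdi + 1 \ge 0$, is what makes the leading exponent $-\tt+\brdi-1$ non-positive so that the word $\dddd1\nn^{-\tt+\brdi-1}$ is a genuine (possibly empty) sequence of negative $d$-letters.

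Next I would show that $\nf\nn(\br)$ is $\sigg$-negative. Each $\ww_\kk$ is a normal word of $\WBKL\nno$, so it consists only of letters $\aa\indi\indii$ with $\indii\le\nno$, whose translations under~\eqref{E:Translation} involve only generators $\sig\ii$ with $\ii\le\nnt$; in particular, no such $\ww_\kk$ contains any occurrence of $\sigpm\nno$. The only letters contributing a generator of index~$\nno$ are the $\tt$ copies of $\dddd1\nn^{-1}$, each of which translates to $\sig\nno^{-1}\sig\nnt^{-1}\cdots\sig1^{-1}$. Hence $\nf\nn(\br)$ contains the letter $\sig\nno$ exactly $\tt$ times, all with negative exponent, and since the hypothesis $\dpt\nn(\br)\neq0$ gives $\tt\ge 1$, the word is $\sig\nno$-negative, hence $\sigg$-negative.

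For the length, I would simply count: the word is a concatenation of $\tt - \brdi + 1$ leading $\dddd1\nn^{-1}$ letters, $\brdi - 1$ additional $\dddd1\nn^{-1}$ letters interspersed between the $\ww_\kk$'s, and the normal word $\ww = \ww_\brdi\ww_\brdio\cdots\ww_1$. As $a$-equivalent words have equal length, $\len\ww$ is the common length of any representative of $\wwt$; and the definition of $\nn$-length gives $\lenr\nn\br = \tt + \len\ww$. Summing contributions yields $\len{\nf\nn(\br)} = \tt + \len\ww = \lenr\nn\br$, establishing the length bound with equality.

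Finally, for the complexity, a $\sigg$-word of length~$\ll$ representing $\br$ can be converted to the rotating normal form $\dddd1\nn^{-\tt}\,\ww$ in time $O(\ll^2)$ by Proposition~\ref{P:RotatingNF}; then the $\ff\nn$-splitting $(\ww_\brdi,\Ldots,\ww_1)$ of the normal word~$\ww$ is read off in linear time by Lemma~\ref{L:AlgoS}; and the final concatenation is linear in the output size, which is bounded by $\lenr\nn\br \le (\nno)\ll$ again by Proposition~\ref{P:RotatingNF}. The overall cost is therefore $O(\ll^2)$. No step here poses any real difficulty: the only subtle point is recognising that the hypothesis $\dpt\nn(\br) > \brd\nn(\br)-2$ is exactly the condition that lets us absorb the shift caused by the rotating automorphism into the leading negative power of $\dddd1\nn$.
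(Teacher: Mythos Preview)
Your proof is correct and follows essentially the same approach as the paper: the equivalence comes from the identity $\ff\nn^{\kk}(\br') = \ddd\nn^{\kk}\,\br'\,\ddd\nn^{-\kk}$ (which the paper phrases as ``pushing powers of~$\dddd1\nn$ to the right''), the $\sigg$-negativity from the fact that each~$\ww_\kk$ lies in~$\WBKL\nno$, and the length and complexity bounds from direct counting together with Proposition~\ref{P:RotatingNF} and Lemma~\ref{L:AlgoS}. One tiny slip: $\ww$ is not literally the concatenation $\ww_\brdi\cdots\ww_1$ but rather $\ff\nn^{\brdio}(\ww_\brdi)\cdots\ff\nn(\ww_2)\,\ww_1$; since $\ff\nn$ preserves length this does not affect your length computation.
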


\begin{proof}
First, we claim that $\nf\nn(\br)$ is an expression of $\br$.
Let $\dddd1\nn^{-\tt}\,\ww$ be the rotating normal form of $\br$ and $(\ww_\brdi,\Ldots,\ww_1)$ be the $\ff\nn$-splitting of $\ww$.
We have 
\begin{equation}
\label{E:P:NF:VeryNegative}
\dddd1\nn^{-\tt}\,\ww=\dddd1\nn^{-\tt}\cdot\ff\nn^\brdio(\ww_\brdi)\cdot ...\cdot\ff\nn(\ww_2)\cdot\ww_1.
\end{equation}
Pushing $\brdio$ powers of~$\dddd1\nn$ to the right in~\eqref{E:P:NF:VeryNegative} and dispatching them between the factors~$\ww_\kk$, we find
\begin{align*}
\dddd1\nn^{-\tt}\,\ww&=\dddd1\nn^{-\tt}\cdot\ff\nn^\brdio(\ww_\brdi)\cdot ...\cdot\ff\nn(\ww_2)\cdot\ww_1\\
&=\dddd1\nn^{-\tt\plus\brdio}\cdot\dddd1\nn^{\minus\brdip}\cdot\ff\nn^\brdio(\ww_\brdi)\cdot ...\cdot\ff\nn(\ww_2)\cdot\ww_1\\
&\equiv\dddd1\nn^{-\tt\plus\brdio}\cdot\ww_\brdi\cdot\dddd1\nn\inv\cdot\dddd1\nn^{\minus\brdi\plus2}\cdot ...\cdot\ff\nn(\ww_2)\cdot\ww_1\\
&\equiv \ ... \ 
\equiv\dddd1\nn^{-\tt\plus\brdio}\cdot\ww_\brdi\cdot\dddd1\nn\inv\cdot
...\cdot\ww_2\cdot
\dddd1\nn\inv\cdot\ww_1
= \nf\nn(\br).
\end{align*}

Next, exactly $\dpt\nn(\br)$ powers of $\dddd1\nn\inv$ occur in $\nf\nn(\br)$. 
Hence, as $\dpt\nn(\br)\not=0$,  at least one $\dddd1\nn\inv$ appears in $\nf\nn(\br)$. By construction, the
intermediate words~$\ww_\kk$ contain no letter~$\aa\indi\nn$. Therefore,
the word $\nf\nn(\br)$ is $\sig\nno$-negative.

As for the length, we find
\begin{align*}
\len{\nf\nn(\br)}&=\tt\minus\brdi\plus1\plus\len{\ww_\brdi}\plus1\plus...\plus\len{\ww_2}\plus1\plus\len{\ww_1}\\
&=\tt\minus\brdi\plus1\plus\len{\ww}\plus\brdio=\len{\dddd1\nn^{-\tt}\,\www}=\lenr\nn{\br}.
\end{align*}

Finally, assume that $\br$ is specified by a word of length $\ll$.
Then, by Proposition~\ref{P:RotatingNF}, we can compute the rotating normal
form of $\br$ in at most $O(\ll^2)$ steps. By Lemma~\ref{L:AlgoS}, computing
the $\ff\nn$-splitting of $\ww$ can be done in $O(\len{\ww})$ steps.
Hence, $\nf\nn(\br)$ can be computed in time~$O(\ll^2)$.
\end{proof}

\subsection{The word $\nf\nn(\br)$: the difficult case}
\label{SS:Difficult}

There remains the case of a braid~$\br$ satisfying  $\dpt\nn(\br)\not=0$ and $ \dpt\nn(\br) \le \brd\nn(\br) - 2$: this is the difficult
case. In this case, it is impossible to directly predict whether $\br$ has a $\sig\nno$-positive or
a $\sig\nno$-neutral expression, and this is the point
where we shall use the ladder and reversing machinery
developed in Sections~\ref{S:Ladders}, \ref{S:Reversing} and \ref{S:Walls}.

\begin{defi}
\label{D:NF:Hard}
Assume that $\br$ is a nontrivial braid of $\BB\nn$ with $\nn \ge 3$ satisfying
$\dpt\nn(\br)\not=0$ and $ \dpt\nn(\br) \le \brd\nn(\br) - 2$.
Let $\dddd1\nn^{-\tt}\,\ww$ be the rotating normal form
of~$\br$, and $(\ww_\brdi,\Ldots,\ww_1)$ be the
$\ff\nn$-splitting of $\ww$. Write $\ww_{\ttpp} = \www_{\ttpp}\,
\aa\indio\nno$. Put 
\[\vv = \ff\nn^{\brdi-1-\tt}(\ww_\brdi) \; ... \; \ff\nn^2(\ww_{\tt+3})\;\ff\nn(\www_{\ttpp})\; \dddd1\indi\inv,
\quad
\uu_\ttpp = \dddd\indio\nnt\inv.
\]

\noindent{\bf Case 1:} $\ww_2 \not= \varepsilon$. Then we put
\[
\nf\nn(\br) = \vv\, \wwww\, \ff\nn(\www_2)\, \ww_1,
\]
where $\wwww$ and $\uu_3$ are the words produced by Lemma~\ref{L:KeyLemma} 
applied to the sequence $(\ww_{\tt+2}, ..., \ww_1)$, the word~$\uu_{\tt+2}$ and the integer $3$, and where $\www_2$ is the word given by Proposition~\ref{P:DangerousAgainstLadder} applied to the words~$\ww_2$ and~$\ff\nn(\uu_3)$;

\noindent{\bf Case 2:} $\ww_2\,{=}\, \varepsilon$, $\ww_3 \,{=}\, ... \,{=}\,  \ww_{\kko} = \aa\nnt\nno$ and
$\ww_\kk \not= \aa\nnt\nno$ for some $\kk \le \tt+1$. Then we put
\[
\nf\nn(\br)=\vv\,\wwww\,\ff\nn(\www_\kk)\,\dddd1\nno^{-\brdii+2}\,\ww_1,
\]
where $\wwww$ and $\uu_\kkp$ are the words given by
Lemma~\ref{L:KeyLemma} applied to the sequence $(\ww_{\tt+2}, ..., \ww_1)$,
the word~$\uu_{\tt+2}$ and the integer $\kkp$, and where
$\www_\kk\,\aa\nnt\nno$ is the word produced by
Proposition~\ref{P:DangerousAgainstLadder} applied to the
words~$\ww_\kk$ and~$\ff\nn(\uu_\kkp)$;

\noindent{\bf Case 3:} $\ww_2 \,{=}\, \varepsilon$, $\ww_3 \,{=}\, ... \,{=}\, \ww_{\tt+1} \,{=}\,
\aa\nnt\nno$ and $\vv\not=\dddd1\nno\inv$. Then we put
\[
 \nf\nn(\br)=\vv\,\dddd1\nno^{-\tt+1}\,\ww_1;
\]

\noindent{\bf Case 4:} $\ww_2 \,{=}\, \varepsilon$, $\ww_3 \,{=}\, ... \,{=}\, \ww_{\tt+1} \,{=}\,
\aa\nnt\nno$ and $\vv=\dddd1\nno\inv$. Then we put
\[
 \nf\nn(\br)=\nf\nno(\ddd\nno^{-\tt}\,\wwt_1).
\]
\end{defi}

\begin{prop}
\label{P:NF:Hard}
Under the hypotheses of Definition~\ref{D:NF:Easy}, the word $\nf\nn(\br)$ is
a  $\sigg$-definite expression of~$\br$, and its length is at most
$3\,\lenr\nn\br$. Moreover, if $\br$ is specified by a $\sigg$-word of
length~$\ll$, the word~$\nf\nn(\br)$ can be computed in time $O(\ll^2)$. 
\end{prop}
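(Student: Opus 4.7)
The plan is to verify in each of the four cases that $\nf\nn(\br)$ represents $\br$, is $\sigg$-definite, has length at most $3\lenr\nn\br$, and can be computed in time $O(\ll^2)$; Case~4 will be handled by induction on~$\nn$, with base case $\nn=2$ covered by Proposition~\ref{P:NF:VeryEasy}, while the other three cases come from direct manipulations of the rotating normal form.

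The common starting point is to push the factor $\dddd1\nn^{-\tt}$ through the first $\brdi-\tt-1$ positive blocks of $\ww\equiv\ff\nn^\brdio(\ww_\brdi)\cdots\ff\nn(\ww_2)\,\ww_1$ using $\dddd1\nn^{-\tt}\,\ff\nn^\kk(x)\equiv\ff\nn^{\kk-\tt}(x)\,\dddd1\nn^{-\tt}$, which the hypothesis $\tt\le\brdit$ makes safe. Decomposing $\ff\nn(\ww_\ttpp)=\ff\nn(\www_\ttpp)\,\aa\indi\nn$ and invoking $\aa\indi\nn\equiv\dddd1\indi\inv\,\dddd1\nn\,\dddd\indi\nno\inv$ (a consequence of \eqref{E:DeltaRelations:Decomposition} and \eqref{E:DeltaRelations:DualGenerator}), we extract one further $\dddd1\nn$ and commute $\dddd\indi\nno\inv$ past $\dddd1\nn^{-\tt}$ via $y\,\dddd1\nn^{-\tt}\equiv\dddd1\nn^{-\tt}\,\ff\nn^\tt(y)$. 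Using $\ff\nn(\dddd\indio\nnt)=\dddd\indi\nno$ from \eqref{E:DeltaRelations:ImageByAutomorphism}, this rewrites $\dddd1\nn^{-\tt}\ww$ as $\vv$ times a tail that is precisely the left-hand side of Lemma~\ref{L:KeyLemma} with new breadth $\brdi'=\ttpp$ and initial dangerous word $\uu_\ttpp=\dddd\indio\nnt\inv$.

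In Case~1, Lemma~\ref{L:KeyLemma} with $\brdii=3$ converts this tail into $\wwww\,\ff\nn^2(\uu_3)\,\ff\nn(\ww_2)\,\ww_1$ where $\wwww$ is $\sig\nno$-nonnegative and $\uu_3$ dangerous; Proposition~\ref{P:DangerousAgainstLadder} applied to the dangerous word $\ff\nn(\uu_3)$ and the ladder $\ww_2$ (Corollary~\ref{C:SplittingAndLadders}) then produces the wall $\www_2$, assembling to $\vv\,\wwww\,\ff\nn(\www_2)\,\ww_1$. A piecewise inspection shows this word is $\sig\nno$-positive: $\vv$ contains no $\siginv\nno$ since its only negative letter is $\dddd1\indi\inv$ with $\indi\le\nno$; $\wwww$ is $\sig\nno$-nonnegative by Lemma~\ref{L:KeyLemma}; $\ff\nn(\www_2)$ is $\sig\nno$-nonnegative because $\www_2$ is $\sig\nnt$-nonnegative by construction; $\ww_1\in\WBKL\nno$ contains no $\sig\nno$; and the leading block $\ff\nn^{\brdio-\tt}(\ww_\brdi)$ of $\vv$ contributes at least one letter of the form $\aa{..}\nn$, since $\brdio-\tt\ge1$ and the last letter of $\ww_\brdi$ is an $\aa{..}\nno$-letter by Lemma~\ref{L:LastLetter}. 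Case~2 runs the same machinery with $\brdii=\kkp$, where the chain of intervening $\aa\nnt\nno$ entries is absorbed using the identity $\ff\nn(\aa\nnt\nno)\,\dddd1\nn\inv\equiv\dddd1\nno\inv$ (which follows from $\dddd1\nn\equiv\dddd1\nno\,\sig\nno$ via \eqref{E:DeltaRelations:Decomposition} and $\ff\nn(\aa\nnt\nno)=\aa\nno\nn=\sig\nno$ via Lemma~\ref{L:Rotation}), producing the block $\dddd1\nno^{-\brdii+2}$; the second clause of Proposition~\ref{P:DangerousAgainstLadder} ensures the wall ends with $\dddd\nnt\nno$, so the overall word stays $\sig\nno$-positive. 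Case~3 has no ladder to reverse against, and direct algebra using the same identity yields a $\sig\nno$-nonnegative word whose $\sig\nno$-sign is controlled by the hypothesis $\vv\ne\dddd1\nno\inv$. Case~4 is the degenerate scenario $\vv=\dddd1\nno\inv$, which occurs only when $\brdi=\ttpp$, $\ww_\ttpp=\aa\nnt\nno$, and every intervening entry collapses; iterated cancellation via $\ff\nn(\aa\nnt\nno)\,\dddd1\nn\inv\equiv\dddd1\nno\inv$ reduces $\br$ to $\ddd\nno^{-\tt}\,\overline{\ww_1}\in\BB\nno$, after which the induction hypothesis on~$\nn$ closes the argument.

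For the length bound we telescope $\len\vv\le\sum_{\kk=\ttpp}^\brdi\len{\ww_\kk}+\indi-1$ with Lemma~\ref{L:KeyLemma}'s bound on $\len\wwww$ and Proposition~\ref{P:DangerousAgainstLadder}'s $\len{\www_2}\le 3\len{\ww_2}+\len{\uu_3}-1$, adding $\len{\ww_1}$ and using $\len{\uu_\ttpp}\le\nn-2$; this yields $\len{\nf\nn(\br)}\le 3\len\ww+\tt+O(\nn)$, and the hypotheses $\tt\ge1$ and $\nn\le\lenr\nn\br+O(1)$ (for nontrivial $\br$) absorb the $O(\nn)$ correction into the $3\tt$ slack to give $3\lenr\nn\br$. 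The complexity bound is a compound of Proposition~\ref{P:RotatingNF} ($O(\ll^2)$), Lemma~\ref{L:KeyLemma} ($O(\len{\uu_\ttpp}\len\ww^2)=O(\ll^2)$), Proposition~\ref{P:DangerousAgainstLadder} ($O(\ll^2)$), and the Case~4 recursion, which stacks at most $\nn-2$ layers of $O(\ll^2)$ into the same asymptotic. The hardest parts will be the fine bookkeeping needed to extract exactly the constant~$3$ in the length estimate (every elementary bound carries small additive constants depending on $\nn$, $\indi$, or $\tt$), and justifying in Case~4 that $\vv=\dddd1\nno\inv$ genuinely implies the reduction $\br\in\BB\nno$ with strictly smaller strand index, so that the inductive call is well posed.
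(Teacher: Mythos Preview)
Your overall strategy matches the paper's: establish the common identity \eqref{E:Claim1}, then treat the four cases of Definition~\ref{D:NF:Hard}, with Case~4 handled by induction on~$\nn$. Two points, however, are genuine gaps rather than mere sketchiness.

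\medskip
\textbf{The length bound.} Your claim that $\nn\le\lenr\nn\br+O(1)$ is false. Take $\br=\dddd1\nn\inv\,\aa1\nn$: by Example~\ref{X:GeneratorSplitting} the $\ff\nn$-splitting of $\aa1\nn$ is $(\aa\nnt\nno,1,1)$, so $\brd\nn(\br)=3$, $\dpt\nn(\br)=1$, and we are in the difficult case with $\lenr\nn\br=2$ for \emph{every}~$\nn$. So you cannot absorb an $O(\nn)$ error into $3\tt$. The paper's bookkeeping in fact produces no $O(\nn)$ term at all: one has $\len\vv=\sum_{\kk=\ttpp}^\brdi\len{\ww_\kk}$ exactly (the $-1$ from dropping the last letter of $\ww_\ttpp$ cancels the $+1$ from the single $d$-letter $\dddd1\indi\inv$), $\len{\uu_\ttpp}=1$ as an $ad$-letter (not $\nn-2$), and the contributions $+\len{\uu_\ttpp}-\len{\uu_3}-\tt+2$ from Lemma~\ref{L:KeyLemma} telescope with $+\len{\uu_3}-1$ from Proposition~\ref{P:DangerousAgainstLadder} to give $\len{\nf\nn(\br)}\le 3\len\ww-\tt+1\le 3\len\ww$ once $\tt\ge1$. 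The constant~$3$ comes out cleanly; there is no additive $\nn$-dependence to worry about.

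\medskip
\textbf{Positivity in Case~1.} Your claim that the leading block $\ff\nn^{\brdi-1-\tt}(\ww_\brdi)$ of~$\vv$ contributes an $\aa{..}\nn$-letter is only valid when $\brdi-1-\tt=1$; for $\brdi-1-\tt\ge2$ one has, e.g., $\ff\nn^2(\aa1\nno)=\ff\nn(\aa2\nn)=\aa13$, which contains no~$\sig\nno$. The correct (and simpler) argument is the one the paper uses: the wall $\www_2$ is $\sig\nnt$-\emph{positive}, not merely nonnegative, because its decomposition contains the explicit factor $\dddd{..}\nno$, which contributes~$\sig\nnt$; hence $\ff\nn(\www_2)$ is $\sig\nno$-positive and carries the sign for the whole word. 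You wrote ``$\sig\nnt$-nonnegative'' and then tried to compensate via~$\vv$, which is where the argument breaks.
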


\begin{proof}
We use the notation of Definition~\ref{D:NF:Hard}.
First, we claim that the following equivalence holds:
\begin{equation}
\label{E:Claim1}
\dddd1\nn^{-\tt}\,\ww\equiv\vv\ \dddd1\nn^{-\tt+1}\,\ff\nn^\ttp(\uu_\ttpp)\,
\ff\nn^\tt(\ww_\ttp)\,\Ldots\ff\nn(\ww_2)\,\ww_1.
\end{equation}
Indeed, as the sequence $(\ww_\brdi,\Ldots,\ww_1)$ is the $\ff\nn$-splitting of $\ww$, we
have
\begin{equation}
\label{E:AlgoA2p:1}
\dddd1\nn^{-\tt}\,\ww=\dddd1\nn^{-\tt}\,\ff\nn^\brdio(\ww_\brdi)\Ldots\ff\nn^\ttp(\ww_\ttpp)\Ldots\ff\nn(\ww_2)\,\ww_1.
\end{equation}
By construction, $\ww_\ttpp$ is $\www_\ttpp\,\aa\indio\nno$.
By~\eqref{E:DeltaRelations:DualGenerator}, we have $\aa\indio\nno \equiv
\dddd\indio\nno\,\uu_\ttpp$, hence $\ww_\ttpp
\equiv \www_\ttpp\,\dddd\indio\nno\,\uu_\ttpp$. Then, the word $\dddd1\nn^{-\tt}\,\ww$ is
equivalent to
\begin{equation}
\label{E:AlgoA2p:2}
\dddd1\nn^{-\tt}\,\ff\nn^\brdio(\ww_\brdi)\Ldots\ff\nn^\ttp(\www_\ttpp\,\dddd\indio\nno)\,\ff\nn^\ttp(\uu_\ttpp)\,\ff\nn^\tt(\ww_\ttp)\,\Ldots\ff\nn(\ww_2)\,\ww_1.
\end{equation}
We push the factor $\dddd1\nn^{-\tt}$ appearing in~\eqref{E:AlgoA2p:2} to the right, until
it arrives at the left of the factor~$\ff\nn^{\ttp}(\uu_\ttpp)$. In this way, we obtain
\[
\dddd1\nn^{-\tt}\,\ww\equiv\ff\nn^{\brdi-\tt-1}(\ww_\brdi)\Ldots\ff\nn(\www_\ttpp\,\dddd\indio\nno)\ \dddd1\nn^{-\tt}\,\ff\nn^\ttp(\uu_\ttpp)\,\ff\nn^\tt(\ww_\ttp)\,\Ldots\ff\nn(\ww_2)\,\ww_1.
\]
Relation~\eqref{E:DeltaRelations:Decomposition} and \eqref{E:DeltaRelations:ImageByAutomorphism}  imply 
$\ff\nn(\dddd\indio\nno)\,\dddd1\nn^{-\tt}\equiv\dddd1\indi\inv$. Inserting the latter value
in the relation above, we obtain~\eqref{E:Claim1}, as expected.

Next, by construction, the word~$\vv$ is $\sig\nno$-nonnegative, and its
length satisfies
\begin{equation}
\label{E:AlgoA2P:v:length}
\len{\vv}=\len{\ww_\brdi}+\Ldots+\len{\ww_\ttpp}.
\end{equation}

To go further, we consider the four cases of
Definition~\ref{D:NF:Hard} separately. In the first three cases, we shall show
that $\nf\nn(\br)$ is
$\sig\nno$-positive; in the fourth case, we shall show that $\nf\nn(\br)$ is $\sigg$-definite using an induction on $\nn$ and 
possibly Propositions~\ref{P:NF:VeryEasy} and~\ref{P:NF:Easy}.

\smallskip
\noindent{\bf Case 1.} First, $\nf\nn(\br)$ is equivalent to~$\dddd1\nn^{-\tt} \ww$.
Indeed, Lemma~\ref{L:KeyLemma} implies
\[
\dddd1\nn^{-\tt}\,\ww\,\equiv\vv\,\wwww\,\ff\nn^2(\uu_3)\,
\ff\nn(\ww_2)\,\ww_1,
\]
while Proposition~\ref{P:DangerousAgainstLadder} implies
$\ff\nn(\uu_3)\,\ww_2\equiv\www_2$. We deduce
\[
\dddd1\nn^{-\tt}\,\ww\,\equiv\vv\,\wwww\,\ff\nn(\www_2)\,\ww_1=
\nf\nn(\br).
\]

Next, by construction, $\www_2$ is a wall lent on~$\last{\ww}_2$, hence, by
definition,  it is $\sig\nnt$-positive. So $\ff\nn(\www_2)$ is $\sig\nno$-positive.
As $\vv$, $\wwww$ and $\ww_1$ are $\sig\nno$-nonnegative, $\nf\nn(\br)$ is
$\sig\nno$-positive.

As for the length, Lemma~\ref{L:KeyLemma} and Proposition~\ref{P:DangerousAgainstLadder} imply
\[
\label{E:AlgoA2p:C1:length}
\len{\wwww}\le3\len{\ww_\ttp}+\Ldots+3\len{\ww_3}-\len{\uu_3}-\tt+2,\quad
\len{\www_2}\le3\len{\ww_2}+\len{\uu_3}-1.
\]
Merging this values with \eqref{E:AlgoA2P:v:length}, and $\tt>0$, we deduce
$\len{\nf\nn(\br)}\le3\len{\ww}$.

\smallskip
\noindent{\bf Case 2.} First, we observe that the last letter
of~$\ww_\kk$ must be~$\aa\nnt\nno$: this  follows from 
Corollary~\ref{C:SplittingAndLadders2} since, by construction of $\kk$, the
word $\ww_\kko$ is either $\varepsilon$ or $\aa\nnt\nno$.

Now, we check that $\nf\nn(\br)$ is equivalent
to~$\dddd1\nn^{-\tt} \ww$. By Lemma~\ref{L:KeyLemma}, we have
\[
\dddd1\nn^{-\tt}\,\ww\,\equiv\vv\,\wwww\,\dddd1\nn^{-\kk+2}\,\ff\nn^{\kk}(\uu_\kkp)\,\ff\nn^\kko(\ww_\kk)\,\ff\nn^\kkt(\aa\nnt\nno)\Ldots\ff\nn^2(\aa\nnt\nno)\,\ww_1.
\]
By Proposition~\ref{P:DangerousAgainstLadder}, $\www_\kk$ is a $\ff\nn(\last{\ww}_\kkp)$-wall and it satisfies
$\ff\nn(\uu_\kkp)\,\ww_\kk\equiv\www_\kk\,\aa\nnt\nno$. Then, we have 
\begin{equation}
\label{E:AlgoA2p:C2:P}
\dddd1\nn^{-\tt}\,\ww\,\equiv\vv\,\wwww\,\dddd1\nn^{-\kk+2}\,\ff\nn^\kko(\www_\kk)\,\ff\nn^\kko(\aa\nnt\nno)\,\Ldots\ff\nn^2(\aa\nnt\nno)\,\ww_1.
\end{equation}
Pushing the negative powers of $\dddd1\nn$ appearing
in~\eqref{E:AlgoA2p:C2:P} to the right and dispatching them between the
$\ff\nn^{..}(\aa\nnt\nno)$, we find
\begin{align*}
\dddd1\nn^{-\tt}\,\ww\,&\equiv\vv\,\wwww\,\ff\nn(\www_\kk)\,\dddd1\nn^{-\kk+2}\,\ff\nn^\kko(\aa\nnt\nno)\,\Ldots\ff\nn^2(\aa\nnt\nno)\,\ww_1\\
&\equiv\vv\,\wwww\,\ff\nn(\www_\kk)\,\ff\nn(\aa\nnt\nno)\,\dddd1\nn\inv\,\dddd1\nn^{-\kk+3}\,\Ldots\ff\nn^2(\aa\nnt\nno)\,\ww_1\\
&\equiv\  \Ldots\ \equiv
\vv\,\wwww\,\ff\nn(\www_\kk)\,\ff\nn(\aa\nnt\nno)\,\dddd1\nn\inv\,\Ldots\ff\nn(\aa\nnt\nno)\,\dddd1\nn\inv\,\ww_1.
\end{align*}
Then, $\ff\nn(\aa\nnt\nno)\,\dddd1\nn\inv\equiv\dddd1\nno\inv$ implies
\[
 \dddd1\nn^{-\tt}\,\ww\equiv\vv\,\wwww\,\ff\nn(\www_\kk)\,\dddd1\nno^{-\kk+2}\,\ww_1=\nf\nn(\br).
\]

Next, by construction, $\www_\kk$ is a $\ff\nn(\last{\ww}_\kkp)$-wall, hence, by definition, it is $\sig\nnt$-positive. 
So $\ff\nn(\www_\kk)$ is $\sig\nno$-positive. As $\vv$, $\wwww$, and~$\dddd1\nno^{-\kk+2}\,\ww_1$, are
$\sig\nno$-nonnegative, the word~$\nf\nn(\br)$ is $\sig\nno$-positive.

As for the length, Lemma~\ref{L:KeyLemma} and Proposition~\ref{P:DangerousAgainstLadder} imply
\[
\len{\wwww}\le3\len{\ww_\ttp}+\Ldots+3\len{\ww_3}-\len{\uu_\kkp}-\tt+2,\quad
\len{\www_\kk\,\aa\nnt\nno}\le3\len{\ww_\kk}+\len{\uu_\kkp}-1.
\]
Merging these values with~\eqref{E:AlgoA2P:v:length} and the
hypothesis $\tt>0$, we find
$\len{\nf\nn(\br)}\le3\len{\ww}$.

\smallskip
\noindent {\bf Case 3.}
As above, we observe that the last letter of $\ww_\ttpp$ is $\aa\nnt\nno$,
which follows from Corollary~\ref{C:SplittingAndLadders2}, since $\ww_\ttp$ is either $1$ or $\aa\nnt\nno$.

Then, we check that $\nf\nn(\br)$ is equivalent to $\dddd1\nn^{-\tt}\,\ww$.
As the last letter of
$\ww_\ttpp$ is $\aa\nnt\nno$, the word $\uu_\ttpp$ is empty.
Then, we find
\begin{equation}
\label{E:AlgoA2p:C3:P} 
\dddd1\nn^{-\tt}\,\ww\,\equiv\vv\,\dddd1\nn^{-\tt+1}\,\ff\nn^\tt(\aa\nnt\nno)\,\Ldots\ff\nn^2(\aa\nnt\nno)\,\ff\nn(\varepsilon)\,\ww_1.
\end{equation}
Pushing again the negative powers of $\dddd1\nn$ of~\eqref{E:AlgoA2p:C3:P}
to the right and dispatching them between the $\ff\nn^{..}(\aa\nnt\nno)$, we find
\begin{align*}
\dddd1\nn^{-\tt}\,\ww\,&\equiv\vv\,\ff\nn(\aa\nnt\nno)\,\dddd1\nn\inv\,\dddd1\nn^{-\tt+2}\,\ff\nn^\tto(\aa\nnt\nno)\,\Ldots\ff\nn^2(\aa\nnt\nno)\,\ww_1\\
&\equiv \ \Ldots \ \equiv
\vv\,\ff\nn(\aa\nnt\nno)\,\dddd1\nn\inv\,\Ldots\ff\nn(\aa\nnt\nno)\,\dddd1\nn\inv\,\ww_1.
\end{align*}
Then, $\ff\nn(\aa\nnt\nno)\,\dddd1\nn\inv\equiv\dddd1\nno\inv$ implies
\[
 \dddd1\nn^{-\tt}\,\ww\equiv\vv\,\dddd1\nno^{-\tt+1}\,\ww_1=\nf\nn(\br)
\]

Next, we check that $\nf\nn(\br)$ is $\sig\nno$-positive.
As $\last{\ww}_\ttpp=\aa\nnt\nno$ holds, we have 
\[\vv=\ff\nn^{\brdi-1-\tt}(\ww_\brdi) \; ... \; \ff\nn^2(\ww_{\tt+3})\;\ff\nn(\wwww_{\ttpp})\; \dddd1\nno\inv\]
By Lemma~\ref{L:LastLetter}$(iii)$, if the word $\www_\ttpp$ is not empty, it ends with a letter of the form~$\aa{..}\nno$, hence the word $\vv$ is $\sig\nno$-positive.
Assume that $\www_\ttpp$ is empty and $\tt\le\brdi-3$ holds.
As the word $\ww_\ttpp$ is $\aa\nnt\nno$, Corollary~\ref{C:SplittingAndLadders2} implies that 
$\ww_{\tt+3}$ ends with $\aa\nnt\nno$. Then, $\vv$ ends
with $\ff\nn^2(\aa\nnt\nno)\,\dddd1\nno\inv$, which is
$\aa1\nn\,\dddd1\nno\inv$, hence $\vv$ is $\sig\nno$-positive. 

Relation~\eqref{E:AlgoA2P:v:length} directly implies $\len{\nf\nn(\br)}=\len{\ww}$.

\smallskip
\noindent {\bf Case 4.}
By construction, we have $\vv = \dddd1\nno\inv$.
The same analysis as in Case~3 gives $\tt=\brdit$ and
\[
 \dddd1\nn^{-\tt}\,\ww\equiv\dddd1\nno^{-\tt}\,\ww_1,
\]
The induction hypothesis together with Propositions~\ref{P:NF:VeryEasy} and~\ref{P:NF:Easy} gives
$\dddd1\nno^{-\tt}\,\ww_1\equiv\nf\nno(\ddd\nno^{-\tt}\,\overline{\ww_1})$, hence
$\dddd1\nn^{-\tt}\,\ww\equiv\nf\nn(\br)$ by definition.

Always by induction hypothesis and Propositions~\ref{P:NF:VeryEasy} and~\ref{P:NF:Easy}, we have 
\[
\len{\nf\nn(\br)}=\len{\nf\nno(\ddd\nno^{-\tt}\,\overline{\ww_1})}\le3\lenr\nno{\ddd\nno^{-\tt}\,\overline{\ww_1}}.
\]
By definition, we have $\lenr\nn\br=\tt+\len{\ww_\brdi}+\Ldots+\len{\ww_1}$ and $\lenr\nno{\ddd\nno^{-\tt}\,\overline{\ww_1}}\le\tt+\len{\ww_1}$, hence $\lenr\nno{\ddd\nno^{-\tt}\,\overline{\ww_1}}\le\lenr\nn\br$. 
Then, as we obtain $\len{\nf\nn(\br)}\le3\lenr\nn{\br}$.

\smallskip
So all cases have been considered, and it only remains to analyze the time
complexity. By Proposition~\ref{P:RotatingNF} and Lemma~\ref{L:AlgoS}, the
rotating normal form of $\br$ and the $\ff\nn$-splitting of
$\ww$ can be computed in time $O(\ll^2)$. Then, in Cases~$1$ and~$2$,
Lemma~\ref{L:KeyLemma} is used once for  $(\ww_\ttpp,\Ldots,\ww_1)$ and
$\uu_\ttpp$, with a cost~$O(\ll^2)$. In addition,
Proposition~\ref{P:DangerousAgainstLadder} is used at most once with
$\ff\nn(\uu_\kkp)$  and $\ww_\kk$ ($\kk=2$ for Case 1), with a cost at
most $O(\max(1,\len{\uu_\kkp}\ll))$. Lemma~\ref{L:KeyLemma} guarantees
$\len{\uu_\kkp}\le\len{\uu_\kkp}+\tt+1-\brdii$, \ie, $\len{\uu_\ttpp}\le\tt$. So 
the total cost entailed by Proposition~\ref{P:DangerousAgainstLadder} is at
most $O(\ll^2)$. The other computations in Cases~$1$, $2$, and $3$ require at
most $O(\ll)$ steps and, therefore, the total cost of the computation
of~$\nf\nn(\br)$ is~$O(\ll^2)$ in Cases~$1$, $2$ and and~$3$. The result is
similar for Case~$4$, using the induction hypothesis, and possibly
Propositions~\ref{P:NF:VeryEasy} and~\ref{P:NF:Easy}.
\end{proof}

\subsection{Putting things together}

Using the $\sigg$-definite words~$\nf\nn(\br)$ constructed in Sections~\ref{SS:TheSigmaDefiniteNormalForm} and~\ref{SS:Difficult}, we are
now ready to establish Theorems~1 and~2 of the introduction. As a
preliminary remark, we observe that the words~$\nf\nn(\br)$ do not really
depend on the index~$\nn$.

\begin{lemm}
If $\br$ belongs to~$\BB\nno$, the words $\nf\nn(\br)$ and
$\nf\nno(\br)$ coincide.
\end{lemm}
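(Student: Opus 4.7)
I argue by induction on $\nn$. The base case $\nn=2$ is trivial, as $\BB\nno=\BB1$ contains only the identity. For $\nn\ge3$, assume the statement for smaller indices and fix $\br\in\BB\nno$. If $\dpt\nno(\br)=0$, then $\br\in\BKL\nno\subseteq\BKL\nn$, so $\dpt\nn(\br)=0$ as well, and both $\nf\nn(\br)$ and $\nf\nno(\br)$ are given by Definition~\ref{D:NF:VeryEasy} as the respective rotating normal forms; the remark below Definition~\ref{D:RotatingNormalForm} that the $\ff\nn$-normal form of a braid of $\BKL\nno$ coincides with its $\ff\nno$-normal form settles this case.

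In the remaining case, set $\tt:=\dpt\nno(\br)\ge1$, let $\dddd1\nno^{-\tt}\,\www$ be the $\nno$-rotating normal form of $\br$, and put $\brr:=\wwwt\in\BKL\nno$, which by construction is not left-divisible by $\ddd\nno$. Using $\ddd\nn\equiv\ddd\nno\,\aa\nno\nn$ together with $\ddd\nn\,\xi\equiv\ff\nn(\xi)\,\ddd\nn$ for every $\xi$ in $\BKL\nn$, a short induction on $\tt$ produces the identity
\begin{equation*}
 \ddd\nn^\tt\,\ddd\nno^{-\tt}\;\equiv\;\aa{\tt\minus1}{\tt}\,\aa{\tt\minus2}{\tt\minus1}\cdots\aa12\,\aa1\nn,
\end{equation*}
so that $\ddd\nn^\tt\,\br$ is represented by the positive $a$-word $\uu:=\aa{\tt\minus1}{\tt}\cdots\aa12\,\aa1\nn\,\www$. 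This gives $\dpt\nn(\br)\le\tt$; the reverse inequality follows from the identification $\BB\nno\cap\BKL\nn=\BKL\nno$, since a strictly smaller $\nn$-depth would, by reversing the same fractional manipulation, exhibit $\brr$ as left-divisible by $\ddd\nno$ inside $\BKL\nno$, contradicting the hypothesis.

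The key structural step is then to identify the $\ff\nn$-splitting $(\ww_\brdi,\ldots,\ww_1)$ of the braid represented by $\uu$. Since $\aa1\nn$ admits no nontrivial right-divisor in $\BKL\nno$, one first obtains $\ww_1=\brr$. After dividing out $\brr$ and applying $\ff\nn\inv$, the residue becomes $\aa{\tt\minus2}{\tt\minus1}\cdots\aa12\,\aa1\nn\,\aa\nno\nn$, whose trailing pair $\aa1\nn\,\aa\nno\nn$ again admits no right-divisor in $\BKL\nno$, forcing $\ww_2=\varepsilon$. Iterating, the trailing pair shifts cyclically under $\ff\nn\inv$, and at each of the steps $\kk=3,\ldots,\tt+2$ the tail extraction picks up exactly one letter $\aa\nnt\nno$; this gives $\ww_\kk=\aa\nnt\nno$ for $3\le\kk\le\tt+2$ and $\brdi=\tt+2$. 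Consequently $\dpt\nn(\br)=\tt=\brdi-2$, so Definition~\ref{D:NF:Hard} applies, and a direct inspection of its conditions shows that we are in Case 4: the decomposition $\ww_\ttpp=\ww_\brdi=\aa\nnt\nno$ yields $\indi=\nno$, the ladder range $\ww_3,\ldots,\ww_{\tt+1}$ is as required, and the word $\vv$ defined there collapses to $\dddd1\nno\inv$. The prescription of Case 4 then gives
\begin{equation*}
 \nf\nn(\br)\;=\;\nf\nno(\ddd\nno^{-\tt}\,\wwt_1)\;=\;\nf\nno(\ddd\nno^{-\tt}\,\brr)\;=\;\nf\nno(\br),
\end{equation*}
closing the induction.

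The principal obstacle is the precise tracking of the $\tail_\nno$ extractions at each stage: each iteration requires systematic use of the BKL commutation and triangular relations in order to rule out any unexpected right-divisor in $\BKL\nno$ before stage $\tt+2$, and the few degenerate sub-situations (notably $\tt=1$, where the $\aa\nnt\nno$-range of Case 4 is empty, and $\brr=1$, where $\ww_1=\varepsilon$) must be verified individually to ensure they fit the generic picture.
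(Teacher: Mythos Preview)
Your proposal is correct and follows exactly the route the paper takes: for $\br\in\BB\nno$ one shows that either $\dpt\nn(\br)=0$ or one lands in Case~4 of Definition~\ref{D:NF:Hard}, and in either case $\nf\nn(\br)=\nf\nno(\br)$. The paper dismisses this as ``an easy verification'' and gives no further detail, whereas you actually compute the $\ff\nn$-splitting of $\ddd\nn^{\tt}\br$ and check the Case~4 hypotheses explicitly; this is the same argument, unpacked.

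Two small remarks. First, your closed formula $\ddd\nn^{\tt}\ddd\nno^{-\tt}\equiv\aa{\tt-1}{\tt}\cdots\aa12\,\aa1\nn$ is only literally correct for $\tt\le\nno$; for general~$\tt$ the factors are $\ff\nn^{\tt-1}(\aa1\nn)\cdots\ff\nn(\aa1\nn)\,\aa1\nn$ with cyclic indices, though the subsequent tail computations go through unchanged. Second, your argument for the reverse inequality $\dpt\nn(\br)\ge\tt$ via $\BB\nno\cap\BKL\nn=\BKL\nno$ is not complete as stated: the element $\ddd\nn^{\tt-1}\br$ does not lie in $\BB\nno$, so one cannot apply the identification to it directly, and an extra step is needed to deduce $\ddd\nno^{\tt-1}\br\in\BKL\nn$ from $\ddd\nn^{\tt-1}\br\in\BKL\nn$. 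This is routine (it is the standard parabolic--closure property of $\BKL\nno$ inside $\BKL\nn$) and does not affect the strategy.
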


\begin{proof}
An easy verification shows that, if $\br$ belongs to~$\BB\nno$, then either
we have $\dpt\nn(\br) = 0$ (if $\br$ belongs to~$\BKL\nno$), or we are in 
Case~4 of Definition~\ref{D:NF:Hard}. In both cases, the definition
of~$\nf\nn(\br)$ implies $\nf\nn(\br)=\nf\nno(\br)$.
\end{proof}

So, from now on, we can skip the subscript~$\nn$ and write $\nf{}(\br)$
without ambiguity. The main result, of which Theorems~1 and~2 are easy
consequences, is as follows. We recall that, for $\br$ a braid, $\lens\br$
denotes the length of the shortest expression of~$\br$ in terms of the Artin
generators~$\sig\ii$.

\begin{thrm}
\label{T:Main}
For each $\nn$-strand braid~$\br$, the $ad$-word~$\nf{}(\br)$ is a
$\sigg$-definite representative of~$\br$, and its length is at most
$3\,(\nno)\,\lens\br$. Moreover, if $\br$
is specified by a $\sigg$-word of length $\ll$, the
word $\nf{}(\br)$ can be computed in time $O(\ll^2)$.
\end{thrm}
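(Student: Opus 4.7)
The plan is to derive Theorem~\ref{T:Main} by simply assembling the three propositions of Sections~\ref{SS:TheSigmaDefiniteNormalForm} and~\ref{SS:Difficult}, combined with an induction on the braid index~$\nn$. The base case $\nn=2$ is immediate: every braid of $\BB2$ is some $\sig1^\kk$, with rotating normal form $\aa12^\kk$ for $\kk\ge 0$ and $\dddd12^{-\kk}$ for $\kk<0$; this word is already $\sigg$-definite, geodesic, and obtainable in linear time by free reduction, so all three assertions of the theorem hold with room to spare. The lemma preceding the theorem, which asserts that $\nf\nn(\br)$ coincides with $\nf\nno(\br)$ whenever $\br$ lies in $\BB\nno$, ensures that the unsubscripted notation $\nf{}(\br)$ is unambiguous and that the induction is internally coherent.

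For the inductive step with $\nn\ge 3$, the key observation is that the trichotomy built into Definitions~\ref{D:NF:VeryEasy}, \ref{D:NF:Easy}, and~\ref{D:NF:Hard} exhausts all possibilities for $\br$ according to the pair $(\dpt\nn(\br),\brd\nn(\br))$. If $\dpt\nn(\br)\in\{0,\lenr\nn\br\}$, Proposition~\ref{P:NF:VeryEasy} returns a $\sigg$-definite word of length at most $\lenr\nn\br$; if $0<\dpt\nn(\br)$ and $\dpt\nn(\br)>\brd\nn(\br)-2$, Proposition~\ref{P:NF:Easy} returns a $\sigg$-negative word of length at most $\lenr\nn\br$; and if $0<\dpt\nn(\br)\le\brd\nn(\br)-2$, Proposition~\ref{P:NF:Hard} returns a $\sigg$-definite word of length at most $3\,\lenr\nn\br$. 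Taking the worst of these three bounds and applying Proposition~\ref{P:RotatingNF}, which gives $\lenr\nn\br\le(\nno)\,\lens\br$, uniformly yields $\len{\nf{}(\br)}\le 3(\nno)\,\lens\br$, which is the length estimate claimed in the theorem.

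Finally, for the complexity, each of the three propositions already produces $\nf\nn(\br)$ in $O(\ll^2)$ time from any length-$\ll$ input word representing $\br$. The one point that deserves a moment of attention is Case~4 of Definition~\ref{D:NF:Hard}, which defines $\nf\nn(\br)=\nf\nno(\ddd\nno^{-\tt}\,\wwt_1)$ by recursion on~$\nn$. The recursive argument lies in $\BB\nno$ and admits a representative word of length at most $\lenr\nn\br\le\ll$, so the induction hypothesis (together with Propositions~\ref{P:NF:VeryEasy} and~\ref{P:NF:Easy} when the recursive call falls into one of the easy regimes) supplies both the length and complexity bounds needed; this has already been folded into the proof of Proposition~\ref{P:NF:Hard}. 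The main (and essentially only) obstacle in the present argument is therefore to verify that the recursion in Case~4 is well-founded and that no constants leak through it, which is immediate since the recursion strictly decreases the braid index~$\nn$. Theorems~1 and~2 of the introduction then follow by translating the $ad$-word~$\nf{}(\br)$ into an ordinary $\sigg$-word via~\eqref{E:Translation}.
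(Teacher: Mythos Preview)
Your proof is correct and follows essentially the same assembly argument as the paper: handle $\nn=2$ directly, then for $\nn\ge3$ invoke the trichotomy of Propositions~\ref{P:NF:VeryEasy}, \ref{P:NF:Easy}, \ref{P:NF:Hard}, combine with the bound $\lenr\nn\br\le(\nno)\lens\br$ of Proposition~\ref{P:RotatingNF}, and collect the $O(\ll^2)$ complexity estimates.

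One small inaccuracy: in your discussion of Case~4 you write that the recursive input ``admits a representative word of length at most $\lenr\nn\br\le\ll$''. The inequality $\lenr\nn\br\le\ll$ is false in general; Proposition~\ref{P:RotatingNF} only gives $\lenr\nn\br\le(\nno)\,\ll$. This does not affect the conclusion, since the recursion depth is bounded by~$\nn$ and the constants are absorbed into the $O(\ll^2)$ for fixed~$\nn$, but the inequality as stated should be corrected.
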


\begin{proof}
Everything is obvious in the case $\nn = 2$, so we assume $\nn \ge 3$.
According to Proposition~\ref{P:GarsideQuotient}, and, according to the
case, Proposition~\ref{P:NF:VeryEasy}, \ref{P:NF:Easy}, or \ref{P:NF:Hard}, the
word~$\nf{}(\br)$ is, in any case, a $\sigg$-definite representative of~$\br$,
and its length is at most $3\lenr\nn\br$. On the other hand,
Proposition~\ref{P:RotatingNF} implies
$\lenr\nn\br\le(\nno)\lens{\br}$, so we deduce the expected upper bound
\begin{equation}
\label{E:Length}
\len{\nf{}(\br)}\le3(\nno)\lens\br.
\end{equation}
Finally, gathering the complexity analysis of Propositions~\ref{P:RotatingNF}, 
\ref{P:NF:VeryEasy}, \ref{P:NF:Easy}, and \ref{P:NF:Hard} shows that, in all
cases, $\nf{}(\br)$ can be computed in $O(\ll^2)$ steps when
$\br$ is specified by an initial word of length~$\ll$.
\end{proof}

As promised, we can now deduce Theorems~1 and~2 in a few words.

\begin{proof}[Proof of Theorem~$1$]
Let $\NF{}(\br)$ be the translation of the $ad$-word~$\nf{}(\br)$
into a $\sigg$-word. The formulas of~\eqref{E:Translation} show that the
translation of a letter~$\aa\indi\indii$ or~$\dddd\indi\indii$ with $\indii \le
\nn$ has length at most~$2\nn-3$. So 
\eqref{E:Length} implies
$\len{\NF{}(\br)}\le6(\nno)^2\lens\br$.
\end{proof}

\begin{proof}[Proof of Theorem~$2$]
Translating $\nf{}(\br)$ into~$\NF{}(\br)$ has a linear time cost, so
the quadratic upper bound for the computation of~$\nf{}(\br)$
established in Theorem~\ref{T:Main} immediately gives a quadratic upper
bound for the computation of~$\NF{}(\br)$.

A non-empty $\sigg$-definite braid word is never trivial, so computing the
word~$\nf{}(\br)$ solves in particular the word problem of~$\BB\nn$, which
is known to have a quadratic complexity exactly for $\nn \ge 3$. Hence the
above quadratic upper bound is sharp.
\end{proof}

Let us now give a concrete example of the previous constructions.

\begin{exam}
\label{X:Final}
We consider the braid $\br=\sig1\,\sigma_3^{-2}\,\sig2\,\sig3$ of
Example~\ref{X:RotatingNormalForm} again. We saw above that its rotating
normal form is the $ad$-word
 \[\dddd14\inv\,\aa12\,\aa14\,\aa23\,\aa12.\]
We saw in Example~\ref{X:RotatingToSplitting} that the
$\ff4$-splitting of $\aa12\aa14\aa23\aa12$ is $(\ww_4, ..., \ww_1)$, with
\[\ww_4 = \aa23,
\quad \ww_3 = \aa23,
\quad \ww_2 = \varepsilon,
\quad \mbox{and} \quad
\ww_1 = \aa23 \aa12.\]
So we have $\dpt4(\br) = 1$
and $\brd4(\br) = 4$, hence $\dpt4(\br)\le\brd4(\br)-2$, and we are in the
difficult case.
With the notation of Definition~\ref{D:NF:Hard}, we have $\tt = 1$ and $\ww_3=\varepsilon\cdot\aa23$, so we first put $\www_3=\varepsilon$, $\indi=3$, $\vv=\ff4^2(\ww_4)\,\ff4(\www_3)\,\dddd1\indi\inv$, and $\uu_3=\dddd\indio2\inv$,
\ie, in the current case, $\vv = \aa14\,\dddd13\inv$ and $\uu_3 = \varepsilon$.
Then, as we have $\ww_2=\varepsilon$, $\ww_3=\aa23$ and $\vv\not=\dddd13\inv$, we are in Case~$3$ of Definition~\ref{D:NF:Hard}. According to the latter, we define
$\nf{}(\br) = \vv\,\dddd13^{0}\,\ww_1$, \ie, 
$\nf{}(\br) =  \aa14\,\dddd13\inv\,\aa23\,\aa12$.
This $ad$-word is $\sig3$-positive: indeed, its $\sigg$-translation is the
$\sigg$-word
\[\NF{}(\br) = \sig1\,\sig2\,\sig3\,\siginv2\,\siginv1\,\siginv2\,\siginv1\,\sig2\,\sig1\] 
which contains one~$\sig3$, but no~$\siginv3$, and no $\sig\ii^{\pm1}$ with $\ii \ge 4$.
\end{exam}

In the very simple case of Example~\ref{X:Final}, the reversing machinery is not used (and directly guessing a
$\sigg$-definite word equivalent to the initial word  would have be
easy). However, much more complicated phenomena may occur in general,
in particular when the braid index reaches~$5$, which is the smallest value for
which there exist ladders with more than one bar. All situations
considered in Definition~\ref{D:NF:Hard} may occur when the
length and the braid index increase, and explicit examples can easily be found
using a computer. The examples witnessing really complicated
behaviors, typically requiring more than one reversing step, involve words that are too long to be given here. However their existence
confirms the really amazing intricacy of the braid relations.

%
%

\section*{Acknowledgment}
The author wishes to thank Patrick Dehornoy for his help in writing the paper.

%
%

\bibliographystyle{ams-pln} \bibliography{Bibliography}

\providecommand{\bysame}{\leavevmode\hbox to3em{\hrulefill}\thinspace}
\begin{thebibliography}{10}

\bibitem{Bessis}
D.~Bessis, \emph{The dual braid monoid}, Ann. Sci. \'Ecole Norm. Sup.
  \textbf{36} (2003), no.~5, 647--683.

\bibitem{BessisDigneMichel}
D.~Bessis, F.~Digne, and J.~Michel, \emph{Springer theory in braid groups and
  the {B}irman-{K}o-{L}ee monoid}, Pacific J. Math. \textbf{205} (2002), no.~2,
  287--309.

\bibitem{BirmanKoLee}
J.~Birman, K.H. Ko, and S.J. Lee, \emph{A new approach to the word and
  conjugacy problems in the braid groups}, Adv. Math. \textbf{139} (1998),
  no.~2, 322--353.

\bibitem{Bressaud}
X.~Bressaud, \emph{A normal form for braids}, J. Knot Theory Ramifications
  \textbf{17} (2008), no.~6, 697--732.

\bibitem{Burckel:WO}
S.~Burckel, \emph{The wellordering on positive braids}, J. Pure Appl. Algebra
  \textbf{120} (1997), no.~1, 1--17.

\bibitem{Dehornoy:LD}
P.~Dehornoy, \emph{Braid groups and left distributive operations}, Trans. Amer.
  Math. Soc. \textbf{345} (1994), no.~2, 293--304.

\bibitem{Dehornoy:FA}
\bysame, \emph{A fast method for comparing braids}, Adv. Math. \textbf{125}
  (1997), no.~2, 200--235.

\bibitem{Dehornoy:GG}
\bysame, \emph{Groupes de {G}arside}, Ann. Sci. \'Ecole Norm. Sup. \textbf{35}
  (2002), no.~2, 267--306.

\bibitem{Dehornoy:GP}
\bysame, \emph{Complete positive group presentations}, J. Algebra \textbf{268}
  (2003), no.~1, 156--197.

\bibitem{Dehornoy:AF}
\bysame, \emph{Alternating normal forms for braids and locally {G}arside
  monoids}, J. Pure Appl. Algebra \textbf{212} (2008), no.~11, 2413--2439.

\bibitem{DehornoyDynnikovRolfsenWiest}
P.~Dehornoy, I.~Dynnikov, D.~Rolfsen, and B.~Wiest, \emph{Ordering braids},
  Mathematical Surveys and Monographs, Amer. Math. Soc., in press, 2002.

\bibitem{DynnikovWiest}
I.~Dynnikov and B.~Wiest, \emph{On the complexity of braids}, J. Eur. Math.
  Soc. \textbf{9} (2007), no.~4, 801--840.

\bibitem{EpsteinCanonHoltLevyPatersonThurston}
D.~Epstein, J.~Cannon, D.~Holt, S.~Levy, M.~Paterson, and W.~Thurston,
  \emph{Word processing in groups}, Jones \& Barlett Publ., 1992.

\bibitem{FennGreenRolfsenRourkeWiest}
R.~Fenn, M.T. Greene, D.~Rolfsen, C.~Rourke, and B.~Wiest, \emph{Ordering the
  braid groups}, Pacific J. Math. \textbf{191} (1999), no.~1, 49--74.

\bibitem{F}
J.~Fromentin, \emph{A well-ordering of dual braid monoids}, C. R. Math. Acad.
  Sci. Paris \textbf{346} (2008), no.~13-14, 729--734.

\bibitem{Garside}
F.~A. Garside, \emph{The braid group and other groups}, Quart. J. Math. Oxford
  Ser. \textbf{20} (1969), 235--254.

\bibitem{Ito}
T.~Ito, \emph{On finite {T}hurston type orderings of braid groups},
  arXiv:math.GR/0810.4074.

\bibitem{Larue}
D.M. Larue, \emph{Left-distributive and left-distributive idempotent algebras},
  Ph.D. thesis, University of Colorado, Boulder, 1994.

\end{thebibliography}

\end{document}